\long\def\comment#1{}
\long\def\comment#1{}
\newtheorem{theorem}{Theorem}
\newtheorem{lemma}{Lemma}
\theoremstyle{definition}
\newcommand{\citen}{\citeasnoun}
\newcommand{\be}{\begin{eqnarray}}
\newcommand{\ee}{\end{eqnarray}}
\newcommand{\NN}{\mathbf{N}}
\newcommand{\ba}{\begin{array}}
\newcommand{\ea}{\end{array}}
\newcommand{\bs}{\begin{align}\begin{split}\nonumber}
\newcommand{\bsnumber}{\begin{align}\begin{split}}
\newcommand{\es}{\end{split}\end{align}}
\renewcommand{\(}{\left(}
\renewcommand{\)}{\right)}
\renewcommand{\]}{\right]}
\renewcommand{\hat}{\widehat}
\newcommand{\cc}{\bar{c}}
\newcommand{\Gn}{\mathbb{G}_n}
\newcommand{\Pn}{\mathbb{P}_n}
\newcommand{\Ep}{{\mathrm{E}}}
\newcommand{\En}{{\mathbb{E}_n}}
\newcommand{\conflvl}{\gamma}
\newcommand{\barEp}{\bar \Ep}
\newcommand{\Ena}{{\mathbb{E}_{n_a}}}
\newcommand{\Enb}{{\mathbb{E}_{n_b}}}
\newcommand{\Enk}{{\mathbb{E}_{n_k}}}
\newcommand{\PXkc}{\mathcal{P}_{\hat I^{k^c}}}
\newcommand{\MXkc}{\mathcal{M}_{\hat I^{k^c}}}
\newcommand{\MXa}{\mathcal{M}_{\hat I^{a}}}
\newcommand{\MXb}{\mathcal{M}_{\hat I^{b}}}
\renewcommand{\Pr}{{\mathrm{P}}}
\newcommand{\semin}[1]{\phi_{{\rm min}}(#1)}
\newcommand{\semax}[1]{\phi_{{\rm max}}(#1)}
\renewcommand{\hat}{\widehat}
\renewcommand{\leq}{\leqslant}
\renewcommand{\geq}{\geqslant}
\begin{document}

\title[Treatment Effects with High-Dimensional Controls]{Supplementary Appendix for ``Inference on Treatment Effects After Selection Amongst High-Dimensional Controls"}
\author[Belloni \ Chernozhukov \ Hansen]{A. Belloni \and V. Chernozhukov \and C. Hansen}

\date{First version:  May 2010,  This \sc{version} of  \today.}

\begin{abstract}
In this supplementary appendix we provide additional results, omitted proofs and extensive simulations that complement the analysis of the main text (arXiv:1201.0224).
\end{abstract}

\maketitle

\section{Intuition for the Importance of Double Selection}
To build intuition, we discuss the case where there is only one control; that is, $p=1$.  This scenario provides the simplest possible setting where variable selection might be interesting.  In this case, Lasso-type methods act like conservative $t$-tests which allows the properties of selection methods to be explained easily.

With $p = 1$, the model is
\begin{eqnarray}\label{eq: PL-simple}
&  & y_{i}  = \alpha_0 d_i + \beta_g x_i + \zeta_i, \\
& & d_i  = \beta_m x_i + v_i. \label{eq: PL-simple2}
\end{eqnarray}
For simplicity, all errors and controls are taken as normal,
\begin{eqnarray}
&  &  \(\begin{array}{cc} \zeta_i \\ v_i\end{array}\) \mid x_i \sim N \(0,\(\begin{array}{cc} \sigma^2_\zeta & 0 \\ 0& \sigma^2_{v}\end{array}\)\),  \  \   \ \ x_i \sim N(0, 1),
 \label{Def: errors}
\end{eqnarray}
where the variance of $x_i$ is normalized to be 1.
 The underlying probability space is equipped with probability measure $\Pr$ (referred to as ``dgp'' throughout the paper).
Let $\mathbf{P}$ denote the collection  of all dgps $\Pr$ where (\ref{eq: PL-simple})-(\ref{Def: errors}) hold
with non-singular covariance matrices in (\ref{Def: errors}). Suppose that we have an i.i.d. sample $(y_i, d_i, x_i)_{i=1}^n$ from the dgp $\Pr_n \in \mathbf{P}$. The subscript $n$ signifies that the dgp and all true parameter values may change with $n$ to better model finite-sample phenomena such coefficients being ``close to zero".  As in the rest of the paper, we keep the dependence of the true parameter values on $n$ implicit.  Under the stated assumption, $x_i$ and $d_i$ are jointly normal with
variances $\sigma^2_x =1$ and $\sigma^2_d = \beta_m^2 \sigma_x^2 + \sigma_v^2$ and correlation $ \rho = \beta_m \sigma_x/\sigma_d$.

The standard post-single-selection method for inference proceeds by applying model selection methods -- ranging from standard $t$-tests to Lasso-type selectors -- to the first equation only, followed by applying OLS to the selected model. In the model selection stage, standard selection methods would necessarily omit $x_i$ wp $\to 1$
if
\begin{equation}\label{eq:badPn}
|\beta_g| \leq \frac{\ell_n}{\sqrt{n}}c_n ,    \ c_n:= \frac{\sigma_\zeta}{\sigma_x\sqrt{1 - \rho^2}},   \text{ for some } \ell_n \to \infty,
\end{equation}
where $\ell_n$ is a slowly varying sequence depending only on $\textbf{P}$.  On the other hand, these methods would necessarily include $x_i$ wp $\to 1$, if
\begin{equation}\label{eq:goodPn}
|\beta_g| \geq \frac{\ell'_n}{\sqrt{n}} c_n , \text{ for some } \ell'_n > \ell_n,
\end{equation}
 where $\ell'_n$
is another slowly varying sequence in $n$ depending only on $\textbf{P}$. In most standard model selection
 devices with sensible tuning choices, we shall have $\ell_n =C \sqrt{\log n}$  and $\ell_n' = C' \sqrt{ \log n}$ with constants $C$ and $C'$ depending only on $\textbf{P}$.   In the case of Lasso methods, we prove this in Section 5.   This is also true in the case of  the conservative $t$-test, which omits $x_i$ if the $t$-statistic $|t|= |\hat \beta_g|/\textrm{s.e.}(\hat \beta_g) \geq \Phi^{-1} (1 - 1/(2n))$, where $\hat \beta_g$ is the OLS estimator, and $\textrm{s.e.}(\hat \beta_g)$ is the
corresponding standard error.   In this case, we have
 $\Phi^{-1} (1 - 1/(2n)) = \sqrt{ 2 \log n} (1 + o(1))$ so the test will have power approaching 1 for alternatives of the form (\ref{eq:goodPn}) with $\ell_n' = 2 \sqrt{ \log n}$ and power approaching 0 for alternatives of the form (\ref{eq:goodPn}) with $\ell_n = \sqrt{ \log n}$. \footnote{ This assumes that the canonical estimator of the standard error is used. }


A standard selection procedure would work with the first equation.   Under ``good" sequences of models $\Pr_n$ such that
(\ref{eq:goodPn}) holds, $x_i$ is included wp $\to 1$,  and the estimator becomes the standard OLS estimator with the standard large sample asymptotics under $\Pr_n$
$$
\sigma_n ^{-1} \sqrt{n}(\hat \alpha - \alpha_0) =  \underbrace{ \sigma^{-1}_n \En[v^2_i]^{-1} \sqrt{n} \En[ v_i \zeta_i]}_{=:i}  + o_{P} (1) \rightsquigarrow N(0,1),
$$
where  $ \sigma^2_n = \sigma^2_\zeta (\sigma^2_v)^{-1}$.    On the other hand, when $\beta_g = 0$ or $\beta_g= o(\ell_n/\sqrt{n})$ and $\rho$ is bounded away from 1, we have that
$$
 \sigma^{* -1}_n  \sqrt{n}(\hat \alpha - \alpha_0) =  \underbrace{ \sigma^{* -1}_n \En[d^2_i]^{-1} \sqrt{n} \En[ d_i \zeta_i]}_{:=i^*} + o_{P} (1)
  \rightsquigarrow N(0,1),
$$
where  $\sigma^{*2}_n = \sigma^2_{\zeta} (\sigma^2_d)^{-1}$.  The variance $\sigma^{*2}_n$ is smaller than the variance $\sigma^2_n$ from estimation with $x_i$ included if $\beta_m \neq 0$.  The potential reduction in variance is often used as a ``motivation" for the standard selection procedure.  The estimator is super-efficient, achieving a variance smaller than the semi-parametric efficiency bound under homoscedasticity. That is, the estimator is ``too good.''

The ``too good" behavior of the procedure that looks solely at the first equaiton has its price.  There are plausible sequences of dgps $\Pr_n$ where $\beta_g=\frac{\ell_n'}{\sqrt{n}}c_n$, the coefficient on $x_i$ is not zero but is close too zero,\footnote{Such sequences are very relevant in that they are designed to generate approximations that better capture the fact that one cannot distinguish an estimated coefficient from 0 arbitrarily well in any given finite sample.}
in which the control $x_i$ is dropped wp $\to 1$ and
\begin{equation} \label{eq: diverges}
|\sigma_n ^{*-1}\sqrt{n}(\hat \alpha - \alpha_0) |  \rightsquigarrow  \infty.
\end{equation}
That is, the standard post-selection estimator is not asymptotically normal and even fails to be uniformly consistent
at the rate of $\sqrt{n}$.   This poor behavior occurs because the omitted variable bias created by dropping $x_i$ may be large even when the  magnitude of the regression coefficient, $|\beta_m|$, in the confounding equation (\ref{eq: PL-simple2}) is small but is not exactly zero. To see this, note
$$
\sigma_n^{*-1}\sqrt{n}(\hat \alpha - \alpha_0) = \underbrace{\sigma_n^{*-1} \En[d^2_i]^{-1} \sqrt{n} \En[ d_i \zeta_i]}_{=:i^*}
+ \underbrace{\sigma_n^{*-1} \En[d^2_i]^{-1} \sqrt{n} \En[d_i x_i] \beta_g}_{=:ii} .
$$
The term $i^*$ has standard behavior; namely   $ i^* \rightsquigarrow N(0, 1)$.
 The term $ii$ generates the \textit{omitted variable bias}, and it may be arbitrarily large, since wp $\to 1$,
$$
|ii| \geq \frac{1}{2} \frac{|\rho|}{\sqrt{ 1- \rho^2}} \ell_n \nearrow \infty,
$$
if $\ell_n |\rho| \nearrow \infty$.\footnote{Recall that $\rho = \beta_m \sigma_x/\sigma_d$, so $\ell_n |\rho| \nearrow \infty$ as long as $\ell_n|\beta_m| \nearrow \infty$ assuming that $\sigma_x/\sigma_d$ is bounded away from 0 and $\infty$.} This yields the conclusion (\ref{eq: diverges}) by the triangle inequality.

In contrast to the standard approach, our post-double-selection method for inference proceeds by applying model selection methods, such as standard $t$-tests or Lasso-type selectors, to both equations and taking the selected controls as the union of controls selected from each equation.  This selection is than followed by applying OLS to the selected controls.  Thus,
our approach drops $x_i$ only if the omitted variable bias term $ii$ is small. To see this, note that the double-selection-methods \textit{include} $x_i$ wp $\to 1$ if its coefficient in either (\ref{eq: PL-simple}) or (\ref{eq: PL-simple2}) is not very small.  Mathematically, $x_i$ is included if
\begin{equation}\label{eq:goodPn2}
\text{ either } |\beta_g|  \geq  \frac{\ell'_n}{\sqrt{n}} \left(\frac{\sigma_\zeta}{\sigma_x\sqrt{1 - \rho^2}}\right )  \text{ or }  |\beta_m| \geq \frac{\ell'_n}{\sqrt{n}} \left(\frac{\sigma_v}{\sigma_x}\right )
\end{equation}
 where $\ell'_n$
is a slowly varying sequence in $n$.   As already noted, $\ell_n \propto \ell_n' \propto \sqrt{\log n}$ would be standard for Lasso-type methods as well as for using simple t-tests to do model selection. Considering t-tests and using these rates, we would omit $x_i$
 if both $|t_g|= |\hat \beta_g|/\textrm{s.e.}(\hat \beta_g)
\leq \Phi^{-1} (1 - 1/(2n))$ \textit{and} $|t_m|= |\hat \beta_g|/\textrm{s.e.}(\hat \beta_g)
\leq \Phi^{-1} (1 - 1/(2n)) $ where $\hat \beta_g$ and $\hat \beta_m$ denote the OLS estimator from each equation and $\textrm{s.e.}$
denotes the corresponding estimated standard errors.  Note that the critical value used in the t-tests above is conservative in the sense that the false rejection probability is tending to zero because $\Phi^{-1} (1 - 1/(2n)) = \sqrt{ 2 \log n} (1 + o(1)).$  We note that Lasso-type methods operate similarly.

Given the discussion in the preceding paragraph, it is immediate that the post-double selection estimator satisfies
\begin{equation}\label{eq: regular behavior}
\sigma_n^{-1} \sqrt{n}  (\check{\alpha}- \alpha_0) =   i + o_{P}(1)   \rightsquigarrow  N(0,1)
\end{equation}
under any sequence of $\Pr_n \in \mathbf{P}$.
We get this approximating distribution whether or not $x_i$ is omitted.  That this is the approximate distribution when $x_i$ is included follows as in the single-selection case.  To see that we get the same approximation when  $x_i$ is omitted,
note that we drop $x_i$ only if
\begin{equation}\label{eq:dropping event}
\text{ both } |\beta_g|  \leq  \frac{\ell_n}{\sqrt{n}} c_n  \text{ and }  |\beta_m'| \leq \frac{\ell_n}{\sqrt{n}} (\sigma_v/\sigma_x) ,
\end{equation}
i.e. coefficients in front of $x_i$ in both equations are small.
In this case,
$$
\sigma_n^{*-1}\sqrt{n}(\check \alpha - \alpha_0) = \underbrace{ \sigma_n^{*-1}\En[d^2_i]^{-1} \sqrt{n} \En[ d_i \zeta_i]}_{=i^*} +
 \underbrace{ \sigma_n^{*-1}\En[d^2_i]^{-1} \sqrt{n} \En[d_i x_i] \beta_g}_{=ii}.
$$
Once again, the term $ii$ is due to \textit{omitted variable bias}, and it obeys wp $\to 1$ under  (\ref{eq:dropping event})
$$
|ii| \leq  2 \sigma^{-1}_{\zeta} \sigma_d  \sigma_d^{-2} \sqrt{n}\sigma_x^2 |\beta_m \beta_g | \leq  2 \frac{\sigma_v/\sigma_d}{\sqrt{1- \rho^2}} \frac{\ell_n^2}{\sqrt{n}}  =    2 \frac{\ell_n^2}{\sqrt{n}}   \to 0,
$$
since $(\sigma_v/\sigma_d)^2 = 1- \rho^2$.  Moreover, we can show
$i^* - i = o_P(1)$ under such sequences, so the first order asymptotics of $\check{\alpha}$ is the same whether $x_i$ is included
or excluded.

To summarize, the post-single-selection estimator may not be root-$n$ consistent in sensible models which translates
into bad finite-sample properties.  The potential poor finite-sample performance may be clearly seen in Monte-Carlo experiments.  The estimator
$\hat \alpha$ is thus non-regular: its first-order asymptotic properties depend on the model sequence $\Pr_n$ in a strong way.
In contrast, the post-double selection estimator $\check \alpha$ guards against omitted variables bias which reduces the dependence of the first-order behavior on $\Pr_n$.  This good behavior
under sequences $\Pr_n$ translates into uniform with respect to $\Pr \in \textbf{P}$ asymptotic normality.

We should note, of course, that the post-double-selection estimator is first-order equivalent to the long-regression in this model.\footnote{This equivalence may be a reason double-selection was previously overlooked.  There are higher-order differences between the long regression estimator and our estimator.  In particular, our estimator
has variance that is smaller than the variance of the long regression estimator by a factor that is proportional to $(1-\ell_n/\sqrt{n})$ if $\beta_g = 0$; and when $\beta_g = \ell_n/\sqrt{n}$, there is a small reduction in variance that is traded against a small increase in bias.} This equivalence disappears under approximating sequences with number of controls proportional to the sample size, $p \propto n$, or greater than the sample size, $p \gg n$.  It is these scenarios that motivate the use of selection as  a means of regularization.  In these more complicated settings the intuition from this simple $p=1$ example carries through, and the post-single selection method has a highly non-regular behavior while the post-double selection method continues to be regular.

It is also informative to consider semi-parametric efficiency in this simple example.  The post-single-selection estimator is super-efficient when $\beta_m \neq 0$ and $\beta_g =0$.  The super-efficiency in this case is apparent upon noting that the estimator is root-n consistent and normal with asymptotic variance $\Ep[\zeta^2_i] \Ep[d_i^2]^{-1}$.  This asymptotic variance is generally smaller than the semi-parametric efficiency bound $\Ep[\zeta^2_i]\Ep [v_i^2]^{-1}$.  The price of this efficiency gain is the fact that the post-single-selection estimator breaks down when $\beta_g$ may be small but non-zero.   The corresponding confidence intervals therefore also break down.  In contrast, the post-double-selection estimator remains well-behaved in any case, and confidence intervals
based on the double-selection-estimator and are uniformly valid for this reason.

\section{Extensions: Other Problems and Heterogeneous Treatment Effects}

\subsection{Other Problems}
In order to discuss extensions in a very simple manner, we assume i.i.d sampling as well as assume away approximation errors, namely $g(z_i) = x_i'\beta_{g0}$ and $m(z_i) = x'_i \beta_{m0}$, where parameters
$\beta_{g0}$ and $\beta_{m0}$ are high-dimensional and that $x_i = P(z_i)$ as before. In this paper we considered a moment condition:
\begin{equation}
\Ep[\varphi(y_i- d_i\alpha_0- x_i'\beta) v_i] =0,
\end{equation}
where $\varphi(u) = u$ and $v_i$ are measurable functions of $z_i$, and the target parameter is $\alpha_0$.   We selected the  instrument $v_i$ such that the equation is first-order insensitive to the parameter $\beta$ at $\beta = \beta_{g0}$:
\begin{equation}\label{immune}
\left.\frac{\partial}{\partial \beta} \Ep[\varphi(y_i -d_i\alpha_0-x_i'\beta) v_i] \right|_{\beta= \beta_{g0}} =0.
\end{equation}
Note that $\varphi(u)= u$ and  $v_i =d_i - m(z_i)$  implement
this condition.  If (\ref{immune}) holds, the estimator of $\alpha_0$ gets ``immunized" against nonregular estimation of $\beta_0$, for example, via a post-selection procedure or other regularized estimators.   Such immunization ideas are in fact behind the classical Frisch-Waugh-Robinson  partialling out technique in the linear setting and the \citeasnoun{Neyman1979}'s $C(\alpha)$ test in the nonlinear setting.   Our contribution here
is to recognize the importance of this immunization in the context of post-selection inference,\footnote{To the best of our knowledge, all prior theoretical and empirical work uses the standard post-selection approach based on the outcome
equation alone, which is highly non-robust way of conducting inference, as shown in extensive monte-carlo, in Section 2.4, and in a sequence of fundamental critiques by Leeb and Potscher, see \citen{leeb:potscher:review} .}  to develop
a robust post-selection approach to inference on the target parameter, and characterize the uniformity regions of this procedure. Our approach uses modern selection methods to estimate $g$ and the function $m$ defining the intstrument $v$.  In an ongoing work, we explore other regularization methods, such as the ridge method or combination of ridge method with Lasso methods, and characterize uniformity regions of the resulting procedures.  Also, generalizations to nonlinear models, where $\varphi$ is non-linear and can correspond to a likelihood score or quantile check function are given in \citeasnoun{BCK-LAD} and \citeasnoun{BCY-honest}; in these generalizations achieving (\ref{immune}) is also critical.

Within the context of this paper, a potentially important extension is to consider a general treatment
effect model, where $d_i$ is interacted with transformations of $z_i$.
As long as the interest lies in a particular regression coefficient, the current framework covers
this implicitly since  $x_i$ could contain interactions of $d_i$ with transformations of controls $z_i$.
In the case  a fixed number of such regression coefficients is of interest,
we can estimate each of the coefficients by re-labeling the corresponding regressor as $d_i$ and other regressors
as $x_i$ and then applying our procedure the fixed number of times.  Such component-wise procedure is valid as long as our regularity conditions hold for each of the resulting regression models in this manner.

A related research direction being pursued is the study of estimation of average treatment effects when
treatment effects are fully heterogeneous. When the treatment variable $d_i \in \{0,1\}$ is binary (or discrete more generally) our approach is readily amenable
to this problem.  In this case the parameter of interest is the average treatment effect
$$\alpha_0 = \Ep[ g(1,z_i) - g(0,z_i)],$$
where $g(d_i,z_i) = \Ep[y_i|d_i,z_i]$. We can write,  assuming again no approximation errors for simplicity, $g(d_i,z_i) = d_ix_i'\beta_{g0,1} + (1-d_i) x_i'\beta_{g0,0}.$
 Suppose that the  propensity score $\Pr(d_i=1\mid z_i)$ is $m(z_i) = \Lambda(x_i'\beta_{m0})$, where $\Lambda(u)$ is a link such
 as logit or linear, then we can use the moment equations
of \citeasnoun{hahn-pp}:
\begin{eqnarray}\label{eq:Hahn}
 &  \Ep\left [ \varphi(\alpha_0, y_i, d_i, g(0, z_i), g(1, z_i), m(z_i)) \right] =0,
 \end{eqnarray}
 where $\varphi(\alpha, y, d,  g_0, g_1, m) =   \alpha -  \frac{d (y -g_1) } {m} + \frac{(1-d) (y -g_0)}{1-m} -    (g_1 - g_0).$
It is straightforward to check that for each $j  \in \{1, 2, 3\}$:
\begin{equation}
\frac{\partial }{\partial \beta_{j}} \Ep\left [ \varphi_i(\alpha_0, y_i, d_i, x_i'\beta_{1},  x_i\beta_{2}, x_i' \beta_{3}) \right] =0,  \ \  \
\end{equation}
when $(\beta_1, \beta_2, \beta_3) = (\beta_{g0,0}, \beta_{g0,1}, \beta_{m0})$ (i.e., evaluated at the true values).   Therefore, by using \citeasnoun{hahn-pp}'s equation we obtain immunization against the crude estimation of either $\beta_{g0}$ or $\beta_{m0}$, just like we do in the partially linear case. Hence we can use the selection approach to regularization and estimate the parameter
of interest $\alpha_0$. Note that in this case the resulting procedure is a double selection method, where terms explaining propensity score
and the regression function are selected. Here too we can use the ``union" approach in fitting each regression function involved, as it gives the best finite-sample
performance in extensive computational experiments.   Using the results of this paper,
it is not difficult to show for the case that $\Lambda(u) =u$ that under the sparsity assumption imposed on both $g$ and $m$,
and additional assumptions needed to guarantee consistency of post-Lasso estimators $\hat g$ and $\hat m$ in the uniform norm, that the post-double-selection estimator $\check \alpha$ that solves $ \En \left [ \varphi(\check \alpha, y_i, d_i, \hat g(0, z_i), \hat g(1, z_i), \hat m(z_i)) \right] =0,$
has  the following large sample behavior:
\begin{equation}
\sigma^{-1}_n \sqrt{n} (\check \alpha - \alpha_0) \rightsquigarrow  N(0,  1), \ \  \sigma^2_n =\Ep[\varphi^2(\alpha_0, y_i, d_i, g(0, z_i), g(1, z_i), m(z_i))],
\end{equation}
where the latter is
the semiparametric efficiency bound of \citeasnoun{hahn-pp}.  Formal
result is proven and stated below, and other formal results
along these lines are given in an ongoing work that studies this as well as other types of  effects.

\subsection{Theoretical Results on ATE with Heterogeneity} Consider i.i.d.
sample $(y_i,d_i,z_i)_{i=1}^n$ on the probability space $(\Omega , \mathfrak{F}, \Pr)$,
where we  call $\Pr$ the data-generating process.
Consider the case where treatment variable is binary $d_i \in \{0,1\}$, and
the outcome and propensity equations, as before,
 \begin{eqnarray}\label{eq: PL1}
 & y_{i}  = g(d_i, z_i) + \zeta_i,  &  \Ep[\zeta_i \mid z_i, d_i]= 0,\\
  & d_i  = m(z_i) + v_i, \label{eq: PL2}  &   \Ep[v_i \mid z_i] = 0.
\end{eqnarray}
The first target parameter  is the average treatment effect:
$\alpha_0 = \Ep[ g(1,z_i) - g(0,z_i)],$ defined above, which is implicitly indexed by $\Pr$, like other parameters.  In this model  $d_i$ is \textit{not} additively separable.  The purpose  of this section is to show that our analysis easily extends to this case, using
 our techniques.

The confounding factors $z_i$ affect the policy variable via the propensity score $m(z_i)$ and the outcome variable via the function $g(d_i, z_i)$. Both of these functions are unknown and potentially complicated. As in the main text, we use linear combinations of control terms $x_i = P(z_i)$ to approximate $g(z_i)$ and $m(z_i)$, writing (\ref{eq: PL1}) and (\ref{eq: PL2}) as
\begin{eqnarray}\label{eq: appPL1}
& & y_{i}  = \underbrace{\tilde x_i'\beta_{g0} + r_{gi}}_{g(d_i,z_i)} + \zeta_i, \\
& & d_i  = \underbrace{\Lambda(x_i'\beta_{m0}) + r_{mi}}_{m(z_i)} + v_i \label{eq: appPL2},
\end{eqnarray}
where $r_{gi}$ and $r_{mi}$ are the  approximation errors, and
\begin{eqnarray}
&&  \tilde x_i := (d_i  x_i', (1-d_i) x_i')', \  \ \beta_{g0} := (\beta_{g0,1}', \beta_{g0,0}'),  \ x_i := P(z_i),
\end{eqnarray}
where $ x_i'\beta_{g0,1}$, $ x_i'\beta_{g0,0}$, and $x_i'\beta_{m0}$ are approximations to $g(1, z_i)$, $g(0, z_i)$, and $m(z_i)$, and $\Lambda(u) = u$ for the case of  linear link and $\Lambda(u) = e^u/(1+ e^u)$
 for the case of the logistic link. In order to allow for a flexible specification and incorporation of pertinent confounding factors, the vector of controls, $x_i = P(z_i)$, we can have a dimension $p=p_n$ which can be large relative to the sample size.

The efficient moment condition, derived by \citeasnoun{hahn-pp}, for parameter $\alpha_0$ is as follows:
\begin{eqnarray}\label{eq:Hahn}
 &  \Ep\left [ \varphi(\alpha_0, y_i, d_i, g(0, z_i), g(1, z_i), m(z_i)) \right] =0,
 \end{eqnarray}
 where
 $$\varphi(\alpha, y, d,  g_0, g_1, m) =   \alpha -  \frac{d (y -g_1) } {m} + \frac{(1-d) (y -g_0)}{1-m} -    (g_1 - g_0).$$

 The post-double-selection estimator $\check \alpha$ that solves
\begin{equation}
 \En \left [ \varphi(  \check \alpha, y_i, d_i, \hat g(0, z_i), \hat g(1, z_i), \hat m(z_i)) \right] =0,
\end{equation}
where $\hat g(d_i,z_i)$ and $\hat m(z_i)$ are post-Lasso estimators of functions $g$ and $m$ based
upon equations (\ref{eq: appPL1})-(\ref{eq: appPL2}). In case of the logistic link $\Lambda$,
Lasso for logistic regression is as defined in \citen{vdGeer} and \citen{Bach2010}, and the associated post-Lasso estimators are as those defined in \citeasnoun{BCY-honest}.

In what follows, we use $\| w_i\|_{\Pr,q}$ to denote the $L^q(\Pr)$
norm of a random variable $w_i$ with law determined by $\Pr$,
and we $\| w_i\|_{\Pn,q}$ to denote the empirical
$L^q(\Pn)$ norm of a random variable with law determined by the emprical
measure $\Pn = n^{-1} \sum_{i=1}^n \delta_{w_i}$, i.e., $\| w_i\|_{\Pn,q} = (n^{-1} \sum_{i=1}^n \|w_i\|^q)^{1/q}$.

Consider fixed positive sequences $\delta_n \searrow 0$ and $\Delta_n \nearrow 0$ and constants
$C>0, c>0, 1/2 > c'>0$, which will not vary with $\Pr$. \\

\textbf{Condition HTE} ($\Pr$) .  \textbf{Heterogeneous Treatment Effects}. \textit{ Consider i.i.d.
sample $(y_i,d_i,z_i)_{i=1}^n$ on the probability space $(\Omega , \mathfrak{F}, \Pr)$,
where we shall call $\Pr$ the data-generating process, such that equations (\ref{eq: appPL1})-(\ref{eq: appPL2})
holds, with $d_i \in \{0,1\}$.  (i) Approximation errors satisfy $\|r_{gi}\|_{\Pr,2} \leq \delta_n n^{-1/4}$, $\|r_{gi}\|_{\Pr,\infty} \leq \delta_n $, and $\|r_{mi}\|_{\Pr,2} \leq \delta_n n^{-1/4}$, $\|r_{mi}\|_{\Pr,\infty} \leq \delta_n $. (ii)
With $\Pr$-probability  no less than $1- \Delta_n$, estimation errors satisfy
$\|\tilde x_i'(\hat \beta_{g} - \beta_{g0})\|_{\Pn,2} \leq \delta_n n^{-1/4}$,  $\|x_i'(\hat \beta_{m} - \beta_{m0})\|_{\Pn,2} \leq \delta_n n^{-1/4}$, $K_n \|\hat \beta_{m} - \beta_{m}\|_1 \leq \delta_n $,  $K_n \|\hat \beta_{m} - \beta_{m0}\|_1 \leq \delta_n $,
estimators and approximations are sparse, namely $\| \hat \beta_{g}\|_0 \leq Cs $, $\|\hat \beta_{m}\|_0 \leq Cs$,
 and $\|  \beta_{g0}\|_0 \leq Cs $, $\|\beta_{m0}\|_0 \leq Cs$
and the empirical and populations norms are equivalent on sparse subsets, namely $\sup_{\|\delta\|_{0} \leq 2 C s} \left | \|\tilde x_i'\delta\|_{\Pn,2}/\|\tilde x_i'\delta\|_{\Pr,2} -1 \right | \leq \delta_n$. (iii)  The following  boundedness conditions hold: $\|x_{ij}||_{\Pr, \infty} \leq K_n$ for each $j$,  $\|g\|_{\Pr,\infty} \leq C$, $\|y_i\|_{\Pr,\infty} \leq C$, $\Pr( c' \leq m(z_i) \leq 1-c') =1$, and $\|\zeta_i^2\|_{\Pr,2} \geq c$. (iv) The sparsity index obeys the following growth condition, $(s \log (p \vee n))^2/n \leq \delta_n$. }\\

These conditions are simple high-level conditions, which encode
both the approximate sparsity of the models as well as impose some
reasonable behavior on the post-selection estimators of $m$ and $g$ (or other sparse estimators). These conditions are implied by other more primitive conditions in the literature. Sufficient conditions for the equivalence between population and empirical sparse eigenvalues are given in \citen{RudelsonZhou2011} and \citen{RudelsonVershynin2008}. The boundedness conditions are made to simplify arguments,
and they could be dealt away with more complicated proofs, under more stringent side conditions.

\begin{theorem}[\textbf{Uniform Post-Double Selection Inference on ATE}]\label{ATE-theorem} Consider the set $\mathbf{P}_n$ of data generating processes $\Pr$ such that equations
(\ref{eq: PL1})-(\ref{eq: PL2}) and Condition ATE (P) holds.  (1) Then under any sequence $\Pr \in \mathbf{P}_n$,
\begin{equation}
\sigma^{-1}_n \sqrt{n} (\check \alpha - \alpha_0) \rightsquigarrow  N(0,  1), \ \  \sigma^2_n = \Ep[\varphi^2(\alpha_0,y_i, d_i, g(0, z_i), g(1, z_i), m(z_i))],
\end{equation}
(2) The result continues to hold with $\sigma^2_n$ replaced by $\hat \sigma^2_n:=
\En[\varphi^2( \alpha_0, y_i, d_i, \hat g(0, z_i), \hat g(1, z_i), \hat m(z_i))]$.   (3) Moreover, the confidence regions based upon
post-double selection estimator $\check \alpha$ have the uniform asumptotic validity,
$$
\lim_{n \to \infty} \sup_{\Pr \in \mathbf{P}_n}| \Pr \left ( \alpha_0 \in [ \check \alpha \pm \Phi^{-1} (1-\xi/2) \hat \sigma_n /\sqrt{n}]\right) - (1- \xi)| = 0 .
$$
\end{theorem}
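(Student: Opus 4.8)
The plan is to exploit the linearity of $\varphi$ in its first argument, which puts $\check\alpha$ in closed form and reduces the problem to establishing an asymptotically linear representation driven by the efficient influence function. Since $\varphi(\alpha,\cdot)=\alpha+(\text{terms free of }\alpha)$, the estimating equation gives $\check\alpha-\alpha_0=-\En[\varphi_i(\hat\eta)]$, where I abbreviate $\varphi_i(\eta):=\varphi(\alpha_0,y_i,d_i,g(0,z_i),g(1,z_i),m(z_i))$, write $\varphi_i(\hat\eta)$ for its plug-in analogue with $\hat\eta=(\hat g(0,\cdot),\hat g(1,\cdot),\hat m)$, and set $\Gn=\sqrt n(\En-\Ep)$. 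I would then use the decomposition
\begin{equation}
\sqrt n(\check\alpha-\alpha_0)=-\sqrt n\,\En[\varphi_i(\eta)]-\Gn[\varphi_i(\hat\eta)-\varphi_i(\eta)]-\sqrt n\,\Ep[\varphi_i(\hat\eta)-\varphi_i(\eta)],
\end{equation}
and show that, after scaling by $\sigma_n^{-1}$, the first term is asymptotically $N(0,1)$ while the other two are $o_P(1)$.

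For the leading term, $\varphi_i(\eta)$ is i.i.d.\ with mean zero by Hahn's moment condition (\ref{eq:Hahn}) and variance $\sigma_n^2$; I would check a Lindeberg condition from the boundedness in Condition HTE(iii) ($\|y_i\|_{\Pr,\infty}\leq C$, $\|g\|_{\Pr,\infty}\leq C$, $c'\leq m\leq 1-c'$) and the lower bound $\sigma_n^2\geq c''>0$ implied by $\|\zeta_i^2\|_{\Pr,2}\geq c$ and overlap, so that $\sigma_n^{-1}\sqrt n\,\En[\varphi_i(\eta)]\rightsquigarrow N(0,1)$. The bias term is where immunization matters: a second-order expansion of $\eta\mapsto\Ep[\varphi_i(\eta)]$ about the truth has a vanishing first-order part by the orthogonality identity (\ref{immune}) (the Gateaux derivative in each of $g(0,\cdot),g(1,\cdot),m$ is zero at $\eta$), leaving only quadratic remainders such as $\Ep[(\hat m-m)^2]$ and cross terms $\Ep[(\hat g-g)(\hat m-m)]$. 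By Cauchy--Schwarz and the norm equivalence of HTE(ii), these are bounded by products of the two $L^2(\Pr)$ rates, each $\delta_n n^{-1/4}$ (the approximation errors $r_{gi},r_{mi}$ from HTE(i) enter identically), so $\sqrt n$ times the bias is $O_P(\delta_n^2)=o_P(1)$; this ``rate double robustness'' is exactly why the $n^{-1/4}$ rates suffice.

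The main obstacle is the empirical-process term $\Gn[\varphi_i(\hat\eta)-\varphi_i(\eta)]$, because $\hat\eta$ is data-dependent and high-dimensional and cannot simply be conditioned out. I would confine $\hat\eta$ to the class of linear combinations of at most $Cs$ of the $p$ controls (the sparsity bounds $\|\hat\beta_g\|_0,\|\hat\beta_m\|_0\leq Cs$ of HTE(ii)), whose uniform entropy is of order $s\log(p\vee n)$, and apply a maximal inequality together with the sparse norm equivalence of HTE(ii) to pass between $\Pn$ and $\Pr$. Since the increment has $\Pr$-variance bounded by $\delta_n^2 n^{-1/2}$, the maximal inequality yields a bound of order $\delta_n n^{-1/4}\sqrt{s\log(p\vee n)}$ up to constants, which the growth condition $(s\log(p\vee n))^2/n\leq\delta_n$ of HTE(iv) forces to $o_P(1)$. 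A subtlety is the nonlinearity of $\varphi$ in $m$ through $1/m$ and $1/(1-m)$: I would linearize these on the region where $c'\leq m\leq 1-c'$, using that $\hat m$ inherits overlap with probability at least $1-\Delta_n$ because $K_n\|\hat\beta_m-\beta_{m0}\|_1\leq\delta_n$ controls $\|\hat m-m\|_{\Pr,\infty}$, so the relevant functions are Lipschitz in the nuisance on that region.

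Parts (2) and (3) then follow. For (2) I would write $\hat\sigma_n^2-\sigma_n^2=(\En-\Ep)[\varphi_i^2(\eta)]+\En[\varphi_i^2(\hat\eta)-\varphi_i^2(\eta)]$, bound the first by a law of large numbers under HTE(iii) and the second exactly as the estimation-error terms above, and use $\sigma_n^2\geq c''$ to conclude $\hat\sigma_n^2/\sigma_n^2=1+o_P(1)$; Slutsky then replaces $\sigma_n$ by $\hat\sigma_n$ in (1). For (3), the crucial point is that every bound above depends only on the sequences $\delta_n,\Delta_n$ and the constants $C,c,c'$, never on $\Pr$, so the representation and limit hold along every sequence $\Pr\in\mathbf{P}_n$. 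I would finish with the standard subsequence argument: were $\sup_{\Pr\in\mathbf{P}_n}|\Pr(\alpha_0\in[\check\alpha\pm\Phi^{-1}(1-\xi/2)\hat\sigma_n/\sqrt n])-(1-\xi)|$ not to vanish, one could extract $\Pr_n\in\mathbf{P}_n$ contradicting (1)--(2); since the limit is the continuous standard normal, Polya's theorem upgrades this to uniform convergence of the coverage probability, giving the claimed uniform validity.
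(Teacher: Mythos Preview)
Your proposal is correct and follows essentially the same approach as the paper's proof: the same three-term decomposition into a CLT term $\Gn[\varphi_i(\eta)]$, an empirical-process increment controlled by a maximal inequality over unions of VC classes indexed by at most $Cs$ of the $p$ coordinates (with entropy $\lesssim s\log(p\vee n)$ and envelope controlled via the sparse-norm equivalence in HTE(ii)), and a bias term handled by Taylor expansion with the first-order part annihilated by orthogonality. The paper carries the Taylor expansion to third order with integral remainder (bounding $|iii_c|\lesssim\sqrt n\,\|h-h_0\|_{\Pr,2}^2\|h-h_0\|_{\Pr,\infty}$) rather than stopping at second order, so that the $1/m$ and $1/(1-m)$ nonlinearities are handled by a uniform bound on $|\partial_t^k\psi|$ for $|k|\leq 3$ on the overlap region rather than by a separate linearization, and it invokes a specific self-normalized maximal inequality (their Lemma~\ref{expo3}) for the stochastic-equicontinuity step, but these are refinements of exactly the argument you outline.
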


The next target parameter  is the average treatment effect on the treated:
$$\gamma_0 = \Ep[ g(1,z_i) - g(0,z_i)|d_i=1].$$
The efficient moment condition, derived by \citeasnoun{hahn-pp}, for parameter $\gamma_0$ is as follows:
\begin{eqnarray}\label{eq:Hahn2}
 &  \Ep\left [ \tilde \varphi(\gamma_0, y_i, d_i, g(0, z_i), g(1, z_i), m(z_i), \mu) \right] =0,
 \end{eqnarray}
 where $\mu = \Ep[m(z_i)] = \Pr(d_i=1)$,  and
 $$\tilde \varphi(\gamma, y, d,  g_0, g_1, m, \mu) =    \frac{d (y -g_1) } {\mu} - \frac{m(1-d) (y -g_0)}{(1-m)\mu}
 +    \frac{d( g_1 - g_0)}{\mu }  - \gamma \frac{d}{\mu}.$$
In this case the post-double-selection estimator $\check \gamma$ that solves
\begin{equation}
 \En \left [ \tilde \varphi(  \check \gamma, y_i, d_i, \hat g(0, z_i), \hat g(1, z_i), \hat m(z_i), \hat \mu) \right] =0,
\end{equation}
where $\hat g(d_i,z_i)$ and $\hat m(z_i)$ are post-Lasso estimators (or other sparse estimators
obeying the regularity conditions posed in HTE) of functions $g$ and $m$ based
upon equations (\ref{eq: appPL1})-(\ref{eq: appPL2}), and $\hat \mu = \En[d_i]$. As mentioned before, in case of the logistic link $\Lambda$, the post-Lasso estimators are as those defined in \citeasnoun{BCY-honest}.

\begin{theorem}[\textbf{Uniform Post-Double Selection Inference on ATT}]\label{ATT-theorem} Consider the set $\mathbf{P}_n$ of data generating processes $\Pr$ such that equations
(\ref{eq: PL1})-(\ref{eq: PL2}) and Condition HTE (P) holds.  (1) Then under any sequence $\Pr \in \mathbf{P}_n$,
\begin{equation}
\sigma^{-1}_n \sqrt{n} (\check \gamma - \gamma_0) \rightsquigarrow  N(0,  1), \ \  \sigma^2_n = \Ep[\tilde \varphi^2(\gamma_0,y_i, d_i, g(0, z_i), g(1, z_i), m(z_i), \mu)],
\end{equation}
(2) The result continues to hold with $\sigma^2_n$ replaced by $\hat \sigma^2_n:=
\En[\tilde \varphi^2( \gamma_0, y_i, d_i, \hat g(0, z_i), \hat g(1, z_i), \hat m(z_i),  \mu)]$.   (3) Moreover, the confidence regions based upon
post-double selection estimator $\check \alpha$ have the uniform asumptotic validity,
$$
\lim_{n \to \infty} \sup_{\Pr \in \mathbf{P}_n}| \Pr \left ( \gamma_0 \in [ \check \gamma \pm \Phi^{-1} (1-\xi/2) \hat \sigma_n /\sqrt{n}]\right) - (1- \xi)| = 0 .
$$
\end{theorem}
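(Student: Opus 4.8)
The plan is to reduce the claim to a uniform linear (Bahadur) representation of $\check\gamma$ with influence function $\tilde\varphi(\gamma_0,y_i,d_i,g(0,z_i),g(1,z_i),m(z_i),\mu)$, paralleling the ATE case. Because $\tilde\varphi$ is affine in $\gamma$ through the single term $-\gamma d/\mu$, the estimating equation solves in closed form, and since $\hat\mu=\En[d_i]$ gives $\En[d_i/\hat\mu]=1$, one obtains $\check\gamma=\En[\,\tfrac{d_i(y_i-\hat g_1)}{\hat\mu}-\tfrac{\hat m(1-d_i)(y_i-\hat g_0)}{(1-\hat m)\hat\mu}+\tfrac{d_i(\hat g_1-\hat g_0)}{\hat\mu}\,]$, where $\hat g_j=\hat g(j,z_i)$ and $\hat m=\hat m(z_i)$. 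Writing $N$ for this numerator at the estimated nuisances and $N_0=\gamma_0\mu$ for its population analogue, the decomposition $\check\gamma-\gamma_0=(N-N_0)/\hat\mu-\gamma_0(\hat\mu-\mu)/\hat\mu$ isolates the contribution of $\hat\mu$, which supplies exactly the $-\gamma_0 d_i/\mu$ summand of $\tilde\varphi$ and confirms that the variability of $\hat\mu=\En[d_i]$ is already absorbed into the stated efficiency bound.

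First I would establish the Neyman orthogonality (``immunization'') of $\tilde\varphi$ with respect to each nuisance at the truth, i.e. that the Gateaux derivatives of $\Pr\mapsto\Ep[\tilde\varphi]$ in the directions $g_0,g_1,m,\mu$ all vanish at $(\beta_{g0,0},\beta_{g0,1},\beta_{m0},\mu)$. The $g_1$-derivative is identically zero; the $g_0$- and $m$-derivatives vanish in expectation because $\Ep[(1-d_i)(y_i-g_0)\mid z_i]=0$ and $\Ep[\tfrac{m(1-d_i)}{1-m}-d_i\mid z_i]=0$; and the $\mu$-derivative vanishes because the two inverse-probability terms are mean zero while $\Ep[d_i(g_1-g_0)]=\gamma_0\mu$. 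This is the step that annihilates the first-order effect of crude post-selection nuisance estimation and is the precise analogue of condition (\ref{immune}).

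The core of the proof is to show, uniformly over $\Pr\in\mathbf{P}_n$, that $N-N_0=(\En-\Ep)[\text{numerator at truth}]+o_P(n^{-1/2})$. Writing $\eta=(g_0,g_1,m)$ for the nuisance triple and $\hat\eta$ for its post-Lasso estimate, I would split $N-N_0$ into (i) a bias term $\Ep[\tilde\varphi(\gamma_0,\hat\eta)-\tilde\varphi(\gamma_0,\eta)]$ and (ii) an empirical-process term $(\En-\Ep)[\tilde\varphi(\gamma_0,\hat\eta)-\tilde\varphi(\gamma_0,\eta)]$. By orthogonality the bias term has no first-order part, so a second-order expansion controls it; the only non-vanishing curvature among $(g_0,g_1,m)$ is the cross term in $(m,g_0)$, which Cauchy--Schwarz and the boundedness $1-m\geq c'$ bound by $\tfrac{1}{c'^2}\|\hat m-m\|_{\Pr,2}\,\|\hat g_0-g_0\|_{\Pr,2}=O(\delta_n^2 n^{-1/2})=o(n^{-1/2})$, where the $L^2$ rates and the empirical/population norm equivalence come from Condition~HTE(i)--(ii). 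The main obstacle is term (ii): the increment is nonlinear in $m$ through $m/(1-m)$, so I would first linearize using $c'\leq m\leq 1-c'$ (which keeps the map Lipschitz), then confine $\hat\eta$ to a sparse class of linear combinations with $\|\hat\beta\|_0\leq Cs$ whose entropy is controlled, and apply a maximal inequality; the shrinkage $\|\hat\eta-\eta\|_{\Pr,2}\to0$ together with the growth condition $(s\log(p\vee n))^2/n\leq\delta_n$ and the sparse-eigenvalue equivalence then yield $o_P(n^{-1/2})$ uniformly.

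Combining the three steps gives $\sigma_n^{-1}\sqrt{n}(\check\gamma-\gamma_0)=\sigma_n^{-1}\sqrt{n}(\En-\Ep)[\tilde\varphi(\gamma_0,\eta,\mu)]+o_P(1)$ uniformly in $\mathbf{P}_n$, and part~(1) follows from a Lindeberg CLT: boundedness of $y_i$ and $g$, $m\in[c',1-c']$, and $\|\zeta_i^2\|_{\Pr,2}\geq c$ in Condition~HTE(iii) bound $\sigma_n^2$ away from $0$ and $\infty$ and verify the Lindeberg condition uniformly. For part~(2) I would show $\hat\sigma_n^2/\sigma_n^2\inp1$ uniformly by the same orthogonality and empirical-process controls applied to the squared moment together with a uniform law of large numbers, the $\hat\mu$ versus $\mu$ discrepancy being $O_P(n^{-1/2})$. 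Part~(3) then follows by combining (1) and (2) through Slutsky's lemma and a subsequence argument: since $\hat\sigma_n^{-1}\sqrt{n}(\check\gamma-\gamma_0)\rightsquigarrow N(0,1)$ along every sequence in $\mathbf{P}_n$ and the standard normal CDF is continuous (Polya), the coverage of $[\check\gamma\pm\Phi^{-1}(1-\xi/2)\hat\sigma_n/\sqrt{n}]$ converges to $1-\xi$ uniformly over $\mathbf{P}_n$.
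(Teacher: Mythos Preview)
Your proposal is correct and follows essentially the same approach as the paper, which in fact presents only the ATE argument and states that the ATT proof has ``identical structure and nearly the same proof.'' Your explicit algebraic handling of $\hat\mu$ (factoring it out so its fluctuation reconstructs the $-\gamma_0 d_i/\mu$ summand of $\tilde\varphi$) and your observation that only the $(m,g_0)$ cross term survives at second order are slightly sharper than the paper's blanket Taylor bound, but the architecture---orthogonality kills the first-order bias, the product of $L^2$ rates kills the second-order bias, an entropy/maximal inequality over sparse linear classes controls the empirical-process increment, then Lyapunov CLT, variance consistency, and uniformity by subsequence/contradiction---is the same.
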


\subsection{Proof of Theorems \ref{ATE-theorem} and \ref{ATT-theorem}. }

The two results have identical structure and have nearly the same proof, and so we present the proof of the  Proof of Theorems \ref{ATE-theorem} only.

  In the proof $a \lesssim b$ means that $a \leq A b$, where the constant
$A$ depends on the constants  in Condition HT only, but not on $n$ once $n \geq n_0=\min\{j: \delta_j \leq 1/2\}$, and not on $\Pr \in \mathbf{P}_n$. For the proof of claims (1) and (2) we consider a sequence $\Pr_n$ in $\mathbf{P}_n$, but for simplicity, we write  $\Pr$ throughout the proof, omitting the index $n$.  Since the argument is asymptotic,
we can just assume that $n \geq n_0$ in what follows.

Step 1.  In this step we establish claim (1).

(a) We begin with a preliminary observation.
Define, for $t= (t_1, t_2, t_3)$,
 $$\psi(y, d,  t) =  \frac{d (y -t_2) } {t_3} - \frac{(1-d) (y -t_1)}{1-t_3} +  ( t_2 - t_1).$$
The derivatives of this function with respect to $t$ obey for all $k =(k_j)_{j=1}^3 \in\mathbb{N}:  0 \leq |k| \leq 3$,
\begin{equation}\label{Lip}
|\partial^k_t \psi (y, d, t) | \leq L, \ \  \forall (y,d,t):  |y| \leq C, |t_1| \leq C,  |t_2| \leq C, c'/2 \leq |t_3| \leq 1-c'/2,
\end{equation}
where $L$ depends only on $c'$ and $C$,  $|k| = \sum_{j=1}^3 k_j,$ and $$\partial^k_t := \partial^{k_1}_{t_1}\partial^{k_2}_{t_2}\partial^{k_3}_{t_3}.$$

 (b).  Let $$
\hat h (z_i) := ( \hat g(0,z_i), \hat g(1,z_i), \hat m(z_i))', \  \ h_0(z_i) :=
(  g(0,z_i),  g(1,z_i), m(z_i))',
$$$$
f_{\hat h} (y_i, d_i, z_i) := \psi (y_i, d_i, \hat h(z_i)) , \  \  f_{h_0} (y_i, d_i, z_i) := \psi(y_i, d_i, h_0(z_i)).
$$
We observe that with probability no less than $1- \Delta_n$,  $$\hat g(0,\cdot)  \in \mathcal{G}_0, \ \ \hat g(1,\cdot)  \in \mathcal{G}_1 \text{ and  } \hat m  \in \mathcal{M},$$ \begin{eqnarray*}
\mathcal{G}_d:=\{ z \mapsto x '\beta :  \|\beta\|_0 \leq s C,  \| x_i '\beta - g(d,z_i)\|_{\Pr,2} \lesssim \delta_n n^{-1/4},
  \| x_i '\beta - g(d,z_i)\|_{\Pr, \infty}
 \lesssim  \delta_n\},
   \end{eqnarray*} \begin{eqnarray*}
\mathcal{M} :=\{z \mapsto \Lambda(x '\beta) : \|\beta\|_0 \leq s C,  \|  \Lambda(x_i '\beta) - m(z_i)\|_{\Pr,2} \lesssim  \delta_n n^{-1/4},  \| \Lambda(x_i '\beta) - m(z_i)\|_{\Pr, \infty}  \lesssim \delta_n\}.
   \end{eqnarray*}
To see this note, that under assumption HT ($\Pr$), under condition (i)-(ii), under the event occurring under condition (ii) of that assumption: for $n \geq n_0=\min\{j: \delta_j \leq 1/2\}$:
 \begin{eqnarray*}
&& \| \tilde x_i '\beta - g(d_i,z_i)\|_{\Pr,2} \leq \| \tilde x_i '(\beta - \beta_{g0})\|_{\Pr,2}+ \| r_{gi} \|_{\Pr,2}  \leq
 2 \| \tilde x_i '(\beta - \beta_{g0})\|_{\Pn,2}+ \| r_{gi} \|_{\Pr,2} \leq 4 \delta_n n^{-1/4}, \\
&& \| \tilde x_i '\beta - g(d_i,z_i)\|_{\Pr,\infty} \leq \| \tilde x_i '(\beta - \beta_{g0})\|_{\Pr,\infty}+ \| r_{gi} \|_{\Pr,\infty}  \leq   K_n \|\beta- \beta_{0g}\|_{1} + \delta_n \leq 2 \delta_n, \end{eqnarray*}
for $\beta=\hat \beta_g$, with evaluation after computing the norms, and noting that for any $\beta$
 $$ \| x_i '\beta - g(1,z_i)\|_{\Pr,2} \vee  \| x_i '\beta - g(0,z_i)\|_{\Pr,2} \lesssim \| \tilde x_i '\beta - g(d_i,z_i)\|_{\Pr,2}$$
 under condition (iii). Furthermore,
for $n \geq n_0=\min\{j: \delta_j \leq 1/2\}$:
 \begin{eqnarray*}
 \| \Lambda(x_i '\beta) - m(z_i) \|_{\Pr,2} && \leq \|  \Lambda(x_i '\beta) - \Lambda(x_i'\beta_{m0}) \|_{\Pr,2}+ \| r_{mi} \|_{\Pr,2}  \\ && \lesssim \|\partial \Lambda\|_{\infty} \| \tilde x_i '(\beta - \beta_{m0})\|_{\Pr,2}+ \| r_{mi} \|_{\Pr,2} \\
&& \lesssim \|\partial \Lambda\|_{\infty} \| \tilde x_i '(\beta - \beta_{m0})\|_{\Pn,2}+ \| r_{mi} \|_{\Pr,2} \lesssim \delta_n n^{-1/4} \\
 \| \Lambda(x_i '\beta) - m(z_i) \|_{\Pr,\infty} && \leq \|\partial \Lambda\|_{\infty} \| \tilde x_i '(\beta - \beta_{g0})\|_{\Pr,\infty}+ \| r_{mi} \|_{\Pr,\infty} \\
 &&  \lesssim   K_n \|\beta- \beta_{m0}\|_{1} + \delta_n \leq 2 \delta_n,
\end{eqnarray*}
for $\beta=\hat \beta_{m0}$, with evaluation after computing the norms.

Hence with probability at least $1- \Delta_n$,
   $$\hat h \in \mathcal{H}_n:=\{ h =  (\bar g(0,z), \bar g(1,z), \bar m(z)) \in \mathcal{G}_{0} \times \mathcal{G}_1 \times \mathcal{M}\}.$$

(c) We have that $$\alpha_0 = \Ep[f_{h_0}] \text{
and } \check \alpha = \En[f_{\hat h}],$$ so that
\begin{eqnarray*}
\sqrt{n} (\check \alpha - \alpha_0) =  \underbrace{\Gn[f_{h_0}]}_{i} + \underbrace{ (\Gn[f_{h}]- \Gn[f_{h_0}])}_{ii} +  \underbrace{ \sqrt{n} (\Ep[f_{h} - f_{h_0}])}_{iii},
\end{eqnarray*}
with $h$ evaluated at $h = \hat h$.  By Liapunov central limit theorem,
$$\sigma_n^{-1} i \rightsquigarrow N(0,1).$$

(d) Note that for $\Delta_i = h(z_i) - h_0(z_i)$,
\begin{eqnarray*}
iii & = & \sqrt{n} \sum_{|k|=1} \Ep[\partial^k_t \psi(y_i,d_i, h_0(z_i)) \Delta_i ^k]  \\
 & + & \sqrt{n} \sum_{|k|=2} 2^{-1} \Ep[\partial^k_t \psi(y_i,d_i, h_0(z_i)) \Delta_i ^k ] \\
 & + & \sqrt{n} \sum_{|k|=3} \int_0^1 6^{-1} \Ep[\partial^k_t \psi(y_i,d_i, h_0(z_i) + \lambda\Delta_i) \Delta_i ^k ] d\lambda, \\
 & =: & iii_a + iii_b + iii_c,
\end{eqnarray*}
(with $h$ evaluated at $h = \hat h$). By the law of iterated expectations and because
$$\Ep[\partial^k_t \psi(y_i,d_i, h_0(d_i, z_i))|d_i, z_i]= 0 \ \  \forall m \in \mathbb{N}^3 : |k| =1,$$
we have that $$iii_a = 0.$$ Moreover, uniformly for any
$h \in \mathcal{H}_n$ we have that
$$|iii_b| \lesssim \sqrt{n} \| h - h_0 \|^2_{\Pr,2} \lesssim \sqrt{n} (\delta_n n^{-1/4})^2 \leq \delta_n^2,$$ $$|iii_c|  \lesssim \sqrt{n} \|h - h_0\|^2_{\Pr,2}\| h-h_0\|_{\Pr,\infty} \lesssim \sqrt{n} (\delta_n n^{-1/4})^2 \delta_n \leq \delta_n^3.$$  Since $\hat h \in \mathcal{H}_{n}$ with probability $1- \Delta_n$,
we have that once $n \geq n_0$,
$$
\Pr( |iii| \lesssim \delta^2_n) \geq 1- \Delta_n.
$$

(e). Furthermore, we have that
$$
|ii| \leq \sup_{h \in \mathcal{H}_n} |\Gn[f_{h}]- \Gn[f_{h_0}]|.
 $$

 The class of functions $\mathcal{G}_d$ for $d \in \{0,1\}$ is   a union of at most $\binom{p}{Cs}$ VC-subgraph classes of functions with VC indices bounded by $C's$.  The class of functions $
\mathcal{M}$ is  a union of at most $\binom{p}{Cs}$ VC-subgraph classes of functions with VC indices bounded by $C's$
(monotone transformation $\Lambda$ preserve the VC-subgraph property). These classes are uniformly bounded and their entropies  therefore satisfy
\begin{equation*}
\log N( \varepsilon, \mathcal{M}, \|\cdot\|_{\Pn,2})+ \log N( \varepsilon, \mathcal{G}_0, \|\cdot\|_{\Pn,2}) +  \log N( \varepsilon, \mathcal{G}_1, \|\cdot\|_{\Pn,2})\lesssim  s \log p + s \log (1/\varepsilon).
\end{equation*}
Finally, the class $\mathcal{F}_n =\{f_h - f_{h_0}: h \in \mathcal{H}_n\}$ is a Lipschitz transform of $\mathcal{H}_n$  with bounded Lipschitz coefficients and with a constant envelope. Therefore, we have that
\begin{equation*}
\log N( \varepsilon, \mathcal{F}_{n}, \|\cdot\|_{\Pn,2}) \lesssim  s \log p + s \log (1/\varepsilon).
\end{equation*}

We shall invoke the following lemma derived in \citeasnoun{Belloni:Chern}.
\begin{lemma}[A Self-Normalized Maximal inequality]\label{expo3} Let $\mathcal{F}$ be a measurable function class
on a sample space. Let $F = \sup_{f \in \mathcal{F}} |f|$, and suppose that there exist some constants $\omega_n>3$
and $\upsilon>1$, such hat
\begin{equation*}
\log N(\epsilon \| F \|_{\mathbb{P}_{n},2}, \mathcal{F},\| \cdot \|_{\mathbb{P}_n,2}) \leq \upsilon m ( \log (
n\vee \omega_n) + \log (1/\epsilon )),  \ 0 < \epsilon < 1.
\end{equation*}
Then for every $\delta \in (0,1/6)$  we have
$$
\sup_{f \in \mathcal{F}} | \mathbb{G}_n(f)| \leq  C_\upsilon \sqrt{2/\delta} \sqrt{m \log (n  \vee
\omega_n)}  ( \sup_{f \in \mathcal{F}}
\|f\|_{\Pr,2} \vee \ \sup_{f \in \mathcal{F}}\|f\|_{\Pn,2}),
$$
with probability at least $1-\delta$ for some constant that $C_\upsilon$.
\end{lemma}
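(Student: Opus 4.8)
The plan is to prove this self-normalized maximal inequality by symmetrization followed by a conditional sub-Gaussian chaining argument, with the entropy hypothesis being stated in the \emph{random} empirical metric $\|\cdot\|_{\Pn,2}$ — this is precisely what makes the conclusion self-normalized, in that the data-dependent radius $\hat\sigma := \sup_{f\in\mathcal F}\|f\|_{\Pn,2}$ appears on the right-hand side rather than a deterministic envelope. Writing also $\bar\sigma := \sup_{f\in\mathcal F}\|f\|_{\Pr,2}$, I would proceed in four moves: (i) symmetrize, replacing $\Gn(f) = n^{-1/2}\sum_i (f(w_i)-\Ep f)$ by the Rademacher process $\Gn^\circ(f) := n^{-1/2}\sum_i \varepsilon_i f(w_i)$; (ii) work conditionally on the data $W=(w_i)_{i=1}^n$, where $\hat\sigma$ and the empirical metric are frozen, and run Dudley's entropy bound to control the conditional second moment $\Ep_\varepsilon[\sup_f|\Gn^\circ(f)|^2\mid W]$; (iii) convert the second-moment bound to a tail bound by Chebyshev, which produces the $\sqrt{2/\delta}$ factor; and (iv) desymmetrize to return to $\Gn$, the population radius $\bar\sigma$ entering at this stage.

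For the chaining step, conditionally on $W$ the increments of $\Gn^\circ$ are sub-Gaussian with respect to $\|\cdot\|_{\Pn,2}$ (Hoeffding), and since $\Ep_\varepsilon[\Gn^\circ(f)^2\mid W]=\|f\|_{\Pn,2}^2\le\hat\sigma^2$ the conditional root-second-moment is controlled by the entropy integral,
$$\Ep_\varepsilon\Big[\sup_{f\in\mathcal F}|\Gn^\circ(f)|^2 \;\Big|\; W\Big]^{1/2} \;\lesssim\; \int_0^{\hat\sigma}\sqrt{\log N(\epsilon,\mathcal F,\|\cdot\|_{\Pn,2})}\,d\epsilon \;+\; \hat\sigma .$$
Substituting the hypothesis $\log N(\epsilon\|F\|_{\Pn,2},\mathcal F,\|\cdot\|_{\Pn,2}) \le \upsilon m(\log(n\vee\omega_n)+\log(1/\epsilon))$, changing variables so the envelope-normalized radius appears, and using $\sqrt{a+b}\le\sqrt a+\sqrt b$, $\int_0^1\sqrt{\log(1/v)}\,dv=\sqrt\pi/2$, and $\hat\sigma\le\|F\|_{\Pn,2}$, the integral collapses to
$$\lesssim \sqrt{\upsilon m}\,\Big[\hat\sigma\sqrt{\log(n\vee\omega_n)} + \hat\sigma\sqrt{\log(\|F\|_{\Pn,2}/\hat\sigma)}\Big].$$
The residual logarithmic term is tamed by the elementary envelope bound $\hat\sigma^2 \ge n^{-1}\max_i F(w_i)^2 \ge n^{-1}\|F\|_{\Pn,2}^2$, which forces $\|F\|_{\Pn,2}/\hat\sigma\le\sqrt n$ and hence $\sqrt{\log(\|F\|_{\Pn,2}/\hat\sigma)}\le\sqrt{\tfrac12\log n}$; together with $\omega_n>3$ (so $\log(n\vee\omega_n)\ge1$) this yields the target order $\hat\sigma\sqrt{\upsilon m\log(n\vee\omega_n)}$ for the conditional root-second-moment.

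A conditional Chebyshev bound applied at level $\delta/2$ then gives $\sup_f|\Gn^\circ(f)|\le C_\upsilon\sqrt{2/\delta}\,\hat\sigma\sqrt{m\log(n\vee\omega_n)}$ with conditional probability at least $1-\delta/2$, which is the source of the $\sqrt{2/\delta}$ and the $1-\delta$ confidence. It remains to pass from $\Gn^\circ$ back to $\Gn$: for this I would use the tail-probability form of the symmetrization inequality (Gin\'e--Zinn / van der Vaart--Wellner), whose side condition is met precisely in the small-$\delta$ regime $\delta\in(0,1/6)$, leaving the remaining $\delta/2$ of probability for the desymmetrization slack. The population radius $\bar\sigma$ enters here because the desymmetrization controls are governed by $\sup_f\Ep f^2=\bar\sigma^2$, so the conclusion is stated with $\bar\sigma\vee\hat\sigma$ to dominate both radii simultaneously; the dominance $\hat\sigma\le\bar\sigma\vee\hat\sigma$ makes this automatic once the empirical bound is in hand.

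I expect the genuine obstacle to be exactly this self-normalization, namely getting the \emph{random} empirical radius $\hat\sigma$ (rather than a deterministic $\Ep$-level quantity) onto the right-hand side of a high-probability statement. Conditioning on the data is what freezes $\hat\sigma$ and lets Chebyshev produce $\sqrt{2/\delta}$ with $\hat\sigma$ in the bound, but conditioning also annihilates all randomness in $\Gn$ itself; symmetrization is the device that reinjects usable Rademacher randomness while keeping the empirical metric fixed, and the crux is to keep the conditional chaining and the tail desymmetrization mutually consistent so that the final bound genuinely carries $\hat\sigma$ (hence $\bar\sigma\vee\hat\sigma$). The envelope inequality $\hat\sigma\ge n^{-1/2}\|F\|_{\Pn,2}$ is the small but essential lemma that disposes of the only other delicate point, the residual log factor.
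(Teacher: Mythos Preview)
The paper does not prove this lemma: it is imported wholesale as a result ``derived in \citeasnoun{Belloni:Chern}'' and used as a black box in Step~1(e) of the proof of Theorems~\ref{ATE-theorem} and~\ref{ATT-theorem}. There is consequently no in-paper argument to compare against.

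Your route---symmetrize, run Dudley's chaining conditionally on the data in the frozen empirical metric, apply Chebyshev to extract the $\sqrt{2/\delta}$ factor, then desymmetrize---is the standard architecture for self-normalized maximal inequalities of this kind and is almost certainly what the cited reference does. The two technical observations you single out are correct and are indeed the points that do the work: the envelope inequality $\hat\sigma^2\ge n^{-1}\max_i F(w_i)^2\ge n^{-1}\|F\|_{\Pn,2}^2$ is exactly what absorbs the residual $\sqrt{\log(\|F\|_{\Pn,2}/\hat\sigma)}$ into $\sqrt{\log n}\le\sqrt{\log(n\vee\omega_n)}$; and the desymmetrization is where $\bar\sigma$ appears and where the restriction $\delta<1/6$ is consumed.

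The one place your sketch is thin is precisely the step you flag as the crux: carrying a \emph{random}, $\sigma(W)$-measurable threshold through the Gin\'e--Zinn/van~der~Vaart--Wellner tail symmetrization, which is stated for deterministic levels. When one inserts the Rademacher signs via the swap $w_i\leftrightarrow w_i'$, the threshold $K\sqrt{2/\delta}(\bar\sigma\vee\hat\sigma)$ is not invariant, so the usual one-line insertion does not go through verbatim. This is fixable---for instance by running the ghost-sample argument conditionally on $W$ so the level is frozen, using that the side condition $P_{W'}(|\Gn'(f^*)|>T/2)\le 4\bar\sigma^2/T^2\le 2\delta/K^2$ holds uniformly because $T\ge K\sqrt{2/\delta}\,\bar\sigma$---but it deserves to be written out rather than asserted. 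You have correctly located the obstacle; what is missing is the two or three lines that dispatch it.
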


Then by Lemma \ref{expo3} together and some simple calculations,  we have that
\begin{eqnarray*}
|ii| &\leq & \sup_{f \in \mathcal{F}_n} | \Gn (f) | =O_{\Pr}(1) \sqrt{s \log (p \vee n)} ( \sup_{f \in \mathcal{F}_n}\|   f\|_{\Pn,2} \vee \sup_{f \in \mathcal{F}_n}\|  f\|_{\Pr,2}) \\
&\leq & O_\Pr(1) \sqrt{s \log (p \vee n)} ( \sup_{h \in \mathcal{H}_n} \|   h - h_0\|_{\Pn,2} \vee \sup_{h \in \mathcal{H}_n} \|  h - h_0\|_{\Pr,2}) = o_{\Pr}(1).
\end{eqnarray*}
The last conclusion follows because  $\sup_{h \in \mathcal{H}_n} \|  h - h_0\|_{\Pr,2} \lesssim  \delta_n n^{-1/4}$
by definition of the norm, and
\begin{eqnarray*}
 \sup_{h \in \mathcal{H}_n} \|   h - h_0\|_{\Pn,2} &&\leq O_{\Pr}(1) \cdot \( \sup_{h \in \mathcal{H}_n} \|  h - h_0\|_{\Pr,2} + \|r_{gi}\|_{\Pr,2}+ \|r_{mi}\|_{\Pr,2}\),
 \end{eqnarray*}
 where the last conclusion follows from the same argument as in step (b) but in a reverse
 order,  switching from empirical norms to population norms, using
 equivalence of norms over  sparse sets imposed in condition (ii) , and also using an application of Markov inequality
 to argue that  $\|r_{gi}\|_{\Pn,2}+ \|r_{mi}\|_{\Pn,2} = O_\Pr(1) ( \|r_{gi}\|_{\Pr,2}+ \|r_{mi}\|_{\Pr,2}).$

Step 2.  Claim (2) follows from consistency: $\hat \sigma_n/\sigma_n = 1+ o_{\Pr}(1)$,
which follows
from  $\hat \sigma_n$ being a Lipschitz transform of $\hat h$ with respect
to $\|\cdot\|_{\Pn,2}$, once $\hat h \in \mathcal{H}_n$
and the consistency of $\hat h$ for $h$ under $\|\cdot\|_{\Pn,2}$.

Step 3. Claim (3) is immediate from claims (2) and (3) by the way of contradiction.
\qed

\section{Deferred Proofs: Proof of Lemma 1}

We establish the result for Lasso (the proof for other feasible Lasso estimators is similar).

By Lemma 7 in \citen{BellChenChernHans:nonGauss}, under our choice of penalty level and loadings, we have that the condition $\lambda/n \geq 2c\|\hat\Psi^{-1}\En[\tilde x_{i}\epsilon_i]\|_\infty$ holds with probability $1-o(1)$. Thus, the conclusion of Lemma 11 of \citen{BellChenChernHans:nonGauss} holds with probability $1-o(1)$, namely
for $c_s = (\En[r_i^2])^{1/2}$ \begin{equation}\label{bound:hats}\hat s \leq s + \left( \min_{m\in \mathcal{H}} \semax{m} \right) \| \hat \Psi^{-1}\|_\infty \left( \frac{2\cc}{\kappa_{\cc}} + \frac{4\cc nc_s}{\lambda \sqrt{s}} \right)^2 \end{equation}
where $\cc = (c+1)/(c-1)$, $$\mathcal{H} = \left\{ m \in \NN :  m \geq 2 s \semax{m}  \| \hat \Psi^{-1}\|_\infty \left( \frac{2\cc}{\kappa_{\cc}} + \frac{4\cc nc_s}{\lambda \sqrt{s}} \right)^2\right\},$$ $$\kappa_{\cc} \geq \max_{m\in \NN}\frac{\sqrt{\semin{m+s}}}{\|\hat \Psi \|_\infty} \( 1 -  \sqrt{\frac{\semax{m+s}}{\semin{m+s}}} \cc\sqrt{s/m}\).$$

By Condition SE, with probability $1-o(1)$ for $n$ sufficiently large we have $\kappa_{\cc} > \kappa'/2\| \hat \Psi\|_\infty$ so that with the same probability
\begin{equation}\label{bound:first}\frac{2\cc}{\kappa_{\cc}} \lesssim 1.\end{equation} Moreover, by condition RF we have with probability $1-o(1)$ that \begin{equation}\label{bound:second}\max\{ \|\hat \Psi\|_\infty, \ \| \hat \Psi^{-1}\|_\infty\} \lesssim 1.\end{equation}  Finally, since $\lambda \gtrsim \sqrt{n\log(p\vee n)}$ we have
\begin{equation}\label{bound:third}\frac{4\cc nc_s}{\lambda \sqrt{s}} \lesssim \frac{\sqrt{n} c_s}{\sqrt{s \log(p\vee n)}}\lesssim 1 \ \ \ \mbox{with probability $1-o(1)$}\end{equation}
since $c_s \lesssim_P \sqrt{s/n}$ by condition ASM and Chebyshev inequality.

Therefore, for some constant $\tilde C$,  we have $\tilde C s \in \mathcal{H}$, so that $\min_{m \in \mathcal{H}} \semax{m} \leq \kappa''$ for $n$ sufficiently large with probability $1-o(1)$ by Condition SE. In turn combining this bound with (\ref{bound:first}), (\ref{bound:second}) and (\ref{bound:third}) into (\ref{bound:hats}) we have that $ \hat s \lesssim s$ holds with probability $1-o(1)$ which is the first statement of (i).

To show the second statement in (i), note that
$$
\min_{\beta \in \Bbb{R}^p: \ \beta_j = 0 \  \forall j \not \in \widehat T} \sqrt{\En[ f(\tilde z_i) -  \tilde x_i'\beta]^2} \leq \sqrt{\En[ f(\tilde z_i) -  \tilde x_i'\hat \beta]^2}
$$ where $\hat \beta$ is the Lasso estimator. Again by Lemma 7 in \citen{BellChenChernHans:nonGauss} we have that the assumptions of Lemma 6 in \citen{BellChenChernHans:nonGauss} hold with probability $1-o(1)$. Using Condition SE to bound $\kappa_{\cc}$ from below and Condition RF to bound $\|\hat\Psi\|_\infty$ from above with probability $1-o(1)$ as before, and $\lambda \lesssim \sigma \sqrt{n\log(p\vee n)}$, it follows from  Lemma 6 in \citen{BellChenChernHans:nonGauss} that with probability $1-o(1)$ that
$$ \sqrt{\En[ f(\tilde z_i) -  \tilde x_i'\hat \beta]^2}
 \lesssim \sigma \sqrt{ \frac{ s \log (p \vee n)}{n} }.$$

The results regarding Post-Lasso in (ii) follow similarly by invoking Lemma 8 in \citen{BellChenChernHans:nonGauss}.

\section{Split-Sample Estimation and Inference}
In this section we discuss a variant of the double selection estimator based on sample splitting. The motivation for the split-sample estimator is that its use allows us to relax the requirement $s^2\log^2 (p\vee n) = o(n)$ that is assumed in the full-sample counterpart to the milder condition
$$ s \log (p\vee n) = o(n).$$

To define the estimator, divide the sample randomly into (approximately) equal parts $a$ and $b$ with sizes $n_a = \lceil n/2 \rceil$ and  $n_b = n - n_a$.  We use superscripts $a$ and $b$ for variables in the first and second subsample respectively. We let the index $k=a,b$ refer to one of the subsamples and let $k^c=\{a,b\}\setminus \{k\}$ refer to the other.

For each subsample $k=a,b$, the model $\widehat I^k$ is selected based on the subsample $k$ independently from the subsample $k^c$. In what follows the model $\widehat I^k$ is used to fit the subsample $k^c$. A constructive way to obtain $\widehat I^a$ and $\widehat I^b$ is to apply the double selection method for each subsample to select the sets of controls $\widehat I^a:= \widehat I_1^a \cup \widehat I_2^a \cup \widehat I_3^a$ and $\widehat I^b:= \widehat I_2^b \cup \widehat I_2^b \cup \widehat I_3^b$.

Then we form estimates in the two subsamples
$$
(\check \alpha^a, \check \beta^a) = \underset{ \alpha \in \Bbb{R}, \beta \in \Bbb{R}^p}{\rm argmin}\{ \Ena[(y_{i} - d_i \alpha - x_i'\beta)^2] \ : \ \beta_j = 0, \forall j \not \in \hat I^b \}, \mbox{and}
$$
$$
(\check \alpha^b, \check \beta^b) = \underset{ \alpha \in \Bbb{R}, \beta \in \Bbb{R}^p}{\rm argmin}\{ \Enb[(y_{i} - d_i \alpha - x_i'\beta)^2] \ : \ \beta_j = 0, \forall j \not \in \hat I^a \}.$$
For an index $i$ in the subsample $k$, we  define the residuals
\begin{align}\label{SSzetaResid0}
\hat \zeta_i^o &:= [y_i - d_i\check \alpha_{k} - x_i'\check \beta_{k}]\{n_k/(n_k - \hat s^{k^c}-1)\}^{1/2}  \\
\label{SSvResid}
\hat v_i &:= d_i - x_i'\hat\beta_{k}\ \textnormal{and}\\
\label{SSzetaResid}
\hat \zeta_i &:= \hat\zeta_i^o \ 1\{|\hat\zeta_i^o|\vee|\hat v_i|\leq Cn^{1/2}/[(\hat s^{k^c}\vee n^{1/2})\log n]^{1/2} \}
\end{align}
where $\hat \beta_k \in \arg\min_\beta \{ \Enk[(d_i-x_i'\beta)^2]:\beta_j=0, \forall j\notin \widehat I^{k^c}\}$ and $\widehat s^{k^c} = |\widehat I^{k^c}|$.

Finally, we combine the estimates into the split-sample estimator based on $\widehat I^a$ and $\widehat I^b$ is defined as
 \begin{equation}\label{Def:SplitDScombined}
\check \alpha_{ab} = \{ (n_a/n)  \Upsilon^a +  (n_b/n) \Upsilon^b  \}^{-1} \{(n_a/n)  \Upsilon^a \check \alpha_a + (n_b/n) \Upsilon^b \check \alpha_b\},
 \end{equation} where $\Upsilon^k = D^k{}'\MXkc D^k/n_k$.

We state below sufficient conditions for the analysis of the
split-sample method.

\textbf{Condition ASTESS ($\Pr$)}. \textit{(i) $\{(y_i,d_i,z_i),i=1,\ldots,n\}$ are i.n.i.d. vectors on $(\Omega,\mathcal{F},\Pr)$ that obey the model
(2.2)-(2.3), and the vector $x_i=P(z_i)$ is a dictionary of transformations of $z_i$, which may depend on $n$ but not on $\Pr$.  (ii)  The true parameter value $\alpha_0$, which may depend on $\Pr$, is bounded,  $\|\alpha_0\| \leq C$. (iii) Functions $m$ and $g$ admit
an approximately sparse form.  Namely there exists $s \geq 1$ and $\beta_{m0}$ and $\beta_{g0}$, which depend on $n$ and $\Pr$, such that
\begin{eqnarray}
&&m(z_i) = x_i' \beta_{m0} + r_{mi},  \ \ \|\beta_{m0}\|_0 \leq s, \ \  \{ \barEp [r_{mi}^2]\}^{1/2} \leq C \sqrt{s/n}, \\
&&g(z_i) = x_i' \beta_{g0} + r_{gi},  \ \ \ \ \ \|\beta_{g0}\|_0 \leq s, \ \ \   \{ \barEp[r_{gi}^2]\}^{1/2} \leq  C \sqrt{s/n}.
\end{eqnarray}
(iv) The sparsity index obeys $s \log (p\vee n)/n \leq C\delta_n$.
(v) For each subsample $k=a,b$, the model $\widehat I^{k^c}$ satisfies condition HLMS. (vi)  We have $\barEp[|v_i^q|+| \zeta_i^q|] \leq C$ for some $q>4$ and $n^{2/q}s\log(p\vee n) / n \leq C\delta_n$.}

The Conditions ASTESS(i)-(iii) agree with the corresponding conditions in ASTE.
The remaining conditions ASTESS(iv)-(v) are implied by Condition ASTE.
We note that Condition ASTESS(vi) is needed only for obtaining consistent estimates
of the asymptotic variance. Such conditions are mild since they do not require uniform
estimation of the functions $g$ and $m$.

The next result establishes that the split-sample
estimator $\check \alpha_{ab}$ has similar large sample properties
to the full-sample double-selection estimator under weaker growth condition.

\begin{theorem}[Inference on Treatment Effects, Split Sample]\label{theorem:inferenceSS} Let $\{\Pr_n\}$ be a sequence of data-generating processes. Assume conditions ASTESS($\Pr$)(i-v), SM($\Pr$), and SE($\Pr$) hold for $\Pn=\Pr_n$ for each $n$ and each subsample. The split sample estimator $\check \alpha_{ab}$ based on $\widehat I^a$ and $\widehat I^b$ obeys,
$$
([\barEp v_i^2]^{-1}\barEp[ v_i^2\zeta_i^2] [\barEp v_i^2]^{-1})^{-1/2} \sqrt{n} (\check \alpha_{ab} - \alpha_0) \rightsquigarrow N(0,1).
$$
Moreover, if Condition ASTESS($\Pr$)(vi) also holds, the result continues to apply if
$\barEp[v_i^2]$ and $\barEp[v_i^2\zeta_i^2]$ are replaced by  $\En[\hat v_i^2]$
and $\En[\hat v_i^2\hat \zeta_i^2]$ for $\hat \zeta_i$ and $\hat v_i$ defined in (\ref{SSzetaResid}) and (\ref{SSvResid}).
\end{theorem}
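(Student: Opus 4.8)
The plan is to analyze each subsample estimator $\check\alpha_k$ on its own and then assemble $\check\alpha_{ab}$, exploiting throughout that the selected model $\widehat I^{k^c}$ is a measurable function of subsample $k^c$ only and is therefore \emph{independent} of the data in subsample $k$. First I would fix $k$ and argue conditionally on subsample $k^c$, so that $\widehat I^{k^c}$ (and its cardinality $\hat s^{k^c}\lesssim s$, which follows from Lemma 1 under Condition HLMS in ASTESS(v)) may be treated as a fixed index set of size $O(s)$. Since $\check\alpha_k$ solves a restricted least squares problem on the controls $\widehat I^{k^c}$, the Frisch--Waugh--Lovell representation together with $Y^k=D^k\alpha_0+G^k+\zeta^k$ gives
\begin{equation*}
\sqrt{n_k}\,(\check\alpha_k-\alpha_0)
=\frac{(D^k)'\MXkc(G^k+\zeta^k)/\sqrt{n_k}}{(D^k)'\MXkc D^k/n_k},
\qquad \Upsilon^k=(D^k)'\MXkc D^k/n_k,
\end{equation*}
where $G^k=(g(z_i))_{i\in k}$ and $\zeta^k=(\zeta_i)_{i\in k}$, and where $\MXkc$ is the residual-maker off the selected controls.

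Writing $D^k=M^k+V^k$ with $M^k=(m(z_i))_{i\in k}$, $V^k=(v_i)_{i\in k}$, I would split the numerator into the leading term $(V^k)'\zeta^k/\sqrt{n_k}$, an average of the $v_i\zeta_i$, plus four remainders: $-(V^k)'(I-\MXkc)\zeta^k$, $(M^k)'\MXkc\zeta^k$, $(V^k)'\MXkc G^k$, and the pure bias $(\MXkc M^k)'(\MXkc G^k)$, each divided by $\sqrt{n_k}$. The crux of the whole argument is the first three, each of which carries exactly one mean-zero error. Because $\MXkc$ depends only on $\{z_i\}_{i\in k}$ and on the externally fixed $\widehat I^{k^c}$, conditioning on $\{z_i,d_i\}_{i\in k}$ and $\widehat I^{k^c}$ turns each into a sum of conditionally mean-zero terms whose conditional variances are bounded by $\trace(I-\MXkc)\lesssim s$ and by the squared residual norms $\|\MXkc M^k\|^2,\|\MXkc G^k\|^2\lesssim s\log(p\vee n)$; hence each remainder is $O_{\Pr}(\sqrt{s\log(p\vee n)})=o_{\Pr}(\sqrt{n})$ precisely under the milder growth condition ASTESS(iv). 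This conditional second-moment bound is available \emph{only} because selection used the complementary subsample; it replaces the uniform empirical-process (entropy) bound of the full-sample proof, removing one factor of $\sqrt{s\log(p\vee n)}$ and thereby relaxing $s^2\log^2(p\vee n)=o(n)$ to $s\log(p\vee n)=o(n)$.

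The pure bias term carries no error and so cannot be conditioned away; I would control it through the double-selection structure, noting that with probability $1-o(1)$ the union $\widehat I^{k^c}$ contains the controls relevant to both equations, so the within-fold projections leave only approximation errors, $\MXkc M^k=\MXkc r_m^k$ and $\MXkc G^k=\MXkc r_g^k$, and the term is bounded by $\sqrt{n_k}\,\{\barEp[r_{mi}^2]\}^{1/2}\{\barEp[r_{gi}^2]\}^{1/2}$ up to the projection, which is kept under control by the approximation-error rate $\sqrt{s/n}$ of ASTESS(iii). For the denominator I would show $\Upsilon^k=\En[v_i^2]+o_{\Pr}(1)\to\barEp[v_i^2]$ by the same $D^k=M^k+V^k$ split, discarding the $O(s/n)$ projected mass. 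Assembling the folds through the definition (\ref{Def:SplitDScombined}) of $\check\alpha_{ab}$, the weights $\Upsilon^a,\Upsilon^b$ cancel against the fold-denominators and the leading contribution collapses to $n^{-1/2}\sum_{i=1}^n v_i\zeta_i$ up to $o_{\Pr}(1)$, so a Lyapunov central limit theorem for the i.n.i.d.\ array delivers the stated sandwich-form normality with variance $[\barEp v_i^2]^{-1}\barEp[v_i^2\zeta_i^2][\barEp v_i^2]^{-1}$.

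For claim (2) I would prove $\En[\hat v_i^2]\to_{\Pr}\barEp[v_i^2]$ and $\En[\hat v_i^2\hat\zeta_i^2]\to_{\Pr}\barEp[v_i^2\zeta_i^2]$ for the residuals (\ref{SSvResid})--(\ref{SSzetaResid}); here the trimming indicator in (\ref{SSzetaResid}) is essential, as it discards the few observations where $|\hat\zeta_i^o|\vee|\hat v_i|$ is large and lets one bound the second-moment estimation error \emph{without} uniform consistency of $\hat g,\hat m$, using instead the $q>4$ moment bound and the growth condition $n^{2/q}s\log(p\vee n)/n\leq C\delta_n$ of ASTESS(vi) together with Chebyshev's inequality. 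The main obstacle, and the entire source of the improvement, is the negligibility of the error-carrying remainders in the second paragraph under the weak condition: the gain of sample splitting rests on converting those terms from suprema over a data-dependent selected model into conditionally mean-zero sums of variance $O(s\log(p\vee n)/n)$, and the delicate points are verifying the validity of the conditioning (independence of $\widehat I^{k^c}$ from fold $k$) and confirming that the second-order bias term stays controlled by the approximation-error rate.
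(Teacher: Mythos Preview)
Your architecture matches the paper's proof almost exactly: the same Frisch--Waugh representation of $\check\alpha_k$, the same five-term decomposition of the numerator $D^{k\prime}\MXkc(G^k+\zeta^k)/\sqrt{n_k}$, the same exploitation of the independence between $\widehat I^{k^c}$ and fold $k$ to reduce the error-carrying remainders to Chebyshev bounds, the same denominator analysis, the same recombination via (\ref{Def:SplitDScombined}) followed by a Lyapunov CLT, and the same trimming-based variance consistency under ASTESS(vi).

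There is one substantive gap. For the pure bias term $(\MXkc M^k)'(\MXkc G^k)/\sqrt{n_k}$ you assert that ``with probability $1-o(1)$ the union $\widehat I^{k^c}$ contains the controls relevant to both equations, so $\MXkc M^k=\MXkc r_m^k$ and $\MXkc G^k=\MXkc r_g^k$.'' This is model-selection consistency, and it is \emph{not} assumed: Condition ASTESS(v) only requires that $\widehat I^{k^c}$ satisfy the high-level condition HLMS, which delivers prediction-norm bounds $\|\MXkc m^k\|=o_{\Pr}(n^{1/4})$ and $\|\MXkc g^k\|=o_{\Pr}(n^{1/4})$ but says nothing about support recovery (in the approximate-sparsity framework Lasso routinely misses small coefficients, and no beta-min condition is imposed). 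The paper handles the term by Cauchy--Schwarz directly, $|i_{k,a}|\leq \|\MXkc m^k\|\,\|\MXkc g^k\|/\sqrt{n_k}=o_{\Pr}(1)$, without any exact-recovery claim. Replace your perfect-selection step with the HLMS prediction bound and the argument goes through; as written, the step is not justified.

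A smaller imprecision: the single conditioning set $\{z_i,d_i\}_{i\in k}\cup\widehat I^{k^c}$ cannot serve all three error-carrying remainders. For $(V^k)'\MXkc G^k/\sqrt{n_k}$ both factors are measurable with respect to that $\sigma$-field, so the term is deterministic rather than conditionally mean-zero. The paper conditions on $\{z_i\}_{i\in k}$ and $\widehat I^{k^c}$ for this term (so that $V^k$, with $\Ep[v_i\mid z_i]=0$, is the noise), and on $\{z_i,d_i\}_{i\in k}$ and $\widehat I^{k^c}$ for the two terms carrying $\zeta^k$; the conditioning set must be tailored to which factor plays the role of noise in each remainder.
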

\begin{proof}
We use the same notation as in the proof of Theorem 1 
with the addition of sub/superscripts indicating the appropriate subsample $k=a,b$, where $k^c = \{a,b\}\setminus \{k\}$.

Step 0.(Combining) In this step we combine both subsample estimators.
Letting $\Upsilon^k = D^k{}'\MXkc D^k/n_k$, for $k=a,b$, so that we have
\begin{eqnarray*}
\sqrt{n}(\check \alpha_{ab} - \alpha_0) & = &
( (n_a/n)  \Upsilon^a  +  (n_b/n) \Upsilon^b   )^{-1} \times \\
& \times &  ( (n_a/n)  \Upsilon^a  \sqrt{n} (\check \alpha_a - \alpha_0) +   (n_b/n) \Upsilon^b \sqrt{n} (\check \alpha_b - \alpha_0)   ) \\
& = &
( V'V/n + o_P(1)   )^{-1} \times \\
& \times &  ( (n_a/n)  \Upsilon^a  \sqrt{n} (\check \alpha_a - \alpha_0) +   (n_b/n) \Upsilon^b \sqrt{n} (\check \alpha_b - \alpha_0)   ) + o_P(1) \\
& = & \{ V'V/n \}^{-1} \times \{(1/\sqrt{2}) \times  {{\Gn}_a} [v_i \zeta_i{}]
+ (1/\sqrt{2}) {{\Gn}_b} [v_i \zeta_i {}]\}+ o_P(1) \\
& = & \{ V'V/n\}^{-1} \times {{\Gn}} [v_i \zeta_i] + o_P(1) \\
 \end{eqnarray*}
 where we are also using the fact that
$$
\Enk [\hat v_i^2 ] -  \Enk [ v_i^2] = o_P(1), \ \ k=a,b
$$
which follows similarly to the proofs given in Step 5.

For $\sigma_n^2 := [\barEp v_i^2]^{-1}\barEp[ v_i^2\zeta_i^2] [\barEp v_i^2]^{-1}$, define
$$
Z_n = \sigma_n^{-1} \sqrt{n}(\check \alpha_{ab} - \alpha_0)  = \Gn[z_{i,n}] + o_{\mathrm{P}}(1),$$
\noindent where $z_{i,n} = \sigma_n^{-1}v_i\zeta_i/\sqrt{n}  $ are i.n.i.d. with mean zero. We have
that for some small enough $\delta>0$
$$
\barEp|z_{i,n}|^{2+\delta} \lesssim \barEp\left[ | v_i|^{2+\delta}|\zeta_i |^{2+\delta}  \right] \lesssim  \sqrt{\barEp |v_i|^{4+2\delta}} \sqrt{\barEp |\zeta_i|^{4 + 2\delta}} \lesssim 1,
$$
by Condition SM(ii).

This condition verifies the Lyapunov condition and thus 
implies that $Z_n \to_d N(0,1)$.

Step 1.(Main) For the subsample $k=a,b$ write
$
\check \alpha_k = \[D^k{}'\MXkc D^k/n_k\]^{-1}[D^k{}'\MXkc Y^k/n_k]
$
so that
$$
\sqrt{n_k}(\check \alpha_k - \alpha_0) = \[D^k{}'\MXkc D^k/n_k\]^{-1}[D^k{}'\MXkc (g^k + \zeta^k)/\sqrt{n_k}] =: ii^{-1}_k \cdot i_k.
$$
By Steps 2 and 3, $ii_k = V^k{}'V^k/n_k + o_P(1)$ and $i_k = V^k{}'\zeta^k/\sqrt{n_k} + o_P(1)$. Next note that
$V^k{}'V^k/n_k = \Ep[V^k{}'V^k/n_k] + o_P(1)$ by Chebyshev, and we have that $\barEp_k[v_i^2\zeta_i^2]$ and $\Ep[V^k{}'V^k/n_k]$ are bounded from above and away from zero by assumption.

Step 2. (Behavior of $i_k$.) Decompose
\begin{eqnarray*}
i_k = V^k{}'\zeta^k/\sqrt{n_k} + \underset{=:i_{k,a}}{m^k{}' \MXkc g^k/\sqrt{n_k}} + \underset{=:i_{k,b}}{m^k{}'\MXkc \zeta^k/\sqrt{n_k}} + \underset{=:i_{k,c}}{V^k{}'\MXkc g^k/\sqrt{n_k}} - \underset{=:i_{k,d}}{ V^k{}'\PXkc \zeta^k/\sqrt{n_k}}.
\end{eqnarray*}
First, note that by Condition ASTESS we have
$$
|i_{k,a}| = |m^k{}'\MXkc g^k/\sqrt{n_k}| \leq \|\MXkc m^k\| \ \|\MXkc g^k\|/\sqrt{n_k} = o_P(1).
$$
Second, by the split sample construction, we have that $\hat I^{k^c}$ is independent from $\zeta^k$, and by assumption of the model $m^k$ is also independent of $\zeta^k$. Thus by Chebyshev inequality
$$
|i_{k,b}| \lesssim_P \| \MXkc m^k /\sqrt{n_k}\| = o_P(1),
$$
where the last relation follows by ASTESS.

Third, using similar independence arguments, by Chebyshev and Condition ASTESS, conclude
$$
|i_{k,c}| \lesssim_P \| \MXkc g^k /\sqrt{n_k}\| = o_P(1).
$$
Fourth, using that $\hat s^{k^c} \lesssim_P s$ by ASTESS so that $ \phi^{-1}_{\min}(\hat s^{k^c}) \lesssim_P 1$ by condition SE, we have that
$$
|i_{k,d}| \leq |\tilde \beta_{V^k}(\hat I^{k^c})' X^k{}'\zeta^k/\sqrt{n_k}|  \lesssim_P   \sqrt{ s / n } = o_P(1),
$$
by Chebyshev since $ \|X^k \tilde \beta_{V^k}(\hat I^{k^c})/\sqrt{n_k}\| \lesssim_P \sqrt{s/n_k}$ because of  the independence of the two subsamples $k$ and $k^c$.

Step 3.(Behavior of $ii_k$.) Since $ii_k = (m^k+V^k)'\MXkc (m^k+V^k)/n_k$, decompose
$$
ii_k =  V^k{}'V^k/n_k + \underset{=:ii_{k,a}}{m^k{}' \MXkc m^k/n_k} +
\underset{=:ii_{k,b}}{ 2 m^k{}'\MXkc V^k/n_k } - \underset{=:ii_{k,c}}{V^k{}'\PXkc V^k/n_k}.
$$
Then $|ii_{k,a}| = o_P(1)$ by Condition ASTESS,
$|ii_{k,b}| =o_P(1)$ by reasoning similar to deriving
the bound for $|i_{k,b}|$, and $|ii_{k,c}| =o_P(1)$
by reasoning similar to deriving the bound for $|i_{k,d}|$.

Step 4.(Auxiliary Bounds.)
Note that
$$\begin{array}{rl}
\|g^k-X^k \check \beta_k \| &  = \|g^k-\mathcal{P}_{\widehat I^{k^c}}(Y^k-D^k\check\alpha_k)\| \\
& \leq \|\MXkc g^k\| +|\check\alpha_k-\alpha_0|\|\mathcal{P}_{\widehat I^{k^c}}D^k\| + \|\mathcal{P}_{\widehat I^{k^c}}\zeta^k\|.\\
\end{array}
$$
By condition ASTESS $\|\MXkc g^k\|=o_P(n^{1/4})$ and by condition SM(ii) we have $\|\mathcal{P}_{\widehat I^{k^c}}D^k/\sqrt{n_k}\|\leq \|D^k/\sqrt{n_k}\|\lesssim_P 1$, and by Step 1 we have $|\check\alpha_k-\alpha_0|\lesssim_P n^{-1/2}$. Moreover,
$$\begin{array}{rl}
\|\mathcal{P}_{\widehat I^{k^c}}\zeta^k\|& = \| X^k[\widehat I^{k^c}](X^k[\widehat I^{k^c}]'X^k[\widehat I^{k^c}])^{-1}X^k[\widehat I^{k^c}]'\zeta^k \|\\
& \leq [\sqrt{\phi_{\max,k}(\hat s^{k^c})}/\phi_{\min,k}(\hat s^{k^c})]\|X^k[\widehat I^{k^c}]'\zeta^k/\sqrt{n_k}\|.\\
\end{array}
$$
We have $\sqrt{\phi_{\max,k}(\hat s^{k^c})}/\phi_{\min,k}(\hat s^{k^c})\lesssim_P 1$ by condition SE, and $\|X^k[I^{k^c}]'\zeta^k/\sqrt{n_k}\|\lesssim_P \sqrt{\hat s^{k^c}}$ by condition SM(ii), the independence between the selected components $\widehat I^{k^c}$ and $\zeta^k$ since they are based on different subsamples, and applying Chebyshev inequality.

Finally, collecting terms we have
$$ \|g^k-X^k \check \beta_k \|/\sqrt{n^k} \lesssim_P o(n^{-1/4})+ \sqrt{\hat s^{k^c}/n_k} $$

Similarly, we have $ \|m^k-X^k \hat \beta_k \|/\sqrt{n^k} \lesssim_P o(n^{-1/4})+ \sqrt{\hat s^{k^c}/n_k}$.

Step 5.(Variance Estimation.) Since $\hat s^k \lesssim_P s = o(n)$, $(n_k-\hat s^k - 1)/n_k = o_P(1)$, so
we can use $n$ as the denominator. Recall the definitions  $\hat\zeta_i^o = y_i - d_i\check\alpha_k - x_i'\check\beta_k$, $\hat v_i = d_i-x_i'\hat\beta_k$ and $\hat\zeta_i = \hat\zeta_i^o 1\{ |\hat\zeta_i^o|\vee |\hat v_i|\leq H_k\}$ if $i$ belongs to subsample $k$ where $H_k = C\sqrt{n/[(\hat s^{k^c}\vee n^{1/2})\log n]}$. For notational convenience let $A_i = \{  |\hat\zeta_i^o|\vee |\hat v_i|\leq H_k \}$. Since $q>4$, $\hat s^{k^c}\lesssim_P s$, and $n^{2/q}s\log(n\vee p) = o(n)$, we have $n^{1/q} = o_P(H_k)$. Hence consider
$$\begin{array}{rl}
 \En[ \hat v_i^{2} ] & = (n_a/n)D^a{}'\MXb D^a/n_a + (n_b/n)D^b{}'\MXa D^b/n_b\\
 & = (n_a/n) ii_a + (n_b/n) ii_b = V'V/n + o_P(1) = \barEp[v_i^2] + o_P(1)\end{array}$$
by Step 3 and $\barEp[|v_i|^q] \lesssim 1$ for some $q > 4$ by condition SM(ii).


By Condition ASTESS(vi), for each subsample $k=a,b$, we have  $\Enk[ v_i^2 \zeta_i^2 ] - \barEp_k[  v_i^2 \zeta_i^2 ] \to_P 0$ by Vonbahr-Esseen's inequality in \citen{vonbahr:esseen} since $\barEp_k[ | v_i \zeta_i|^{2+\delta}]\leq (\barEp_k[ | v_i|^{4+2\delta}]\barEp_k[ |\ \zeta_i|^{4+2\delta}])^{1/2}$ is uniformly bounded for $4+2\delta\leq q$.
Thus it suffices to show that $\Enk[ \hat v_i^2\hat \zeta_i^2] - \Enk[  v_i^2 \zeta_i^2 ] \to_P 0$. By the triangular inequality
$$\begin{array}{rl}
| \Enk[ \hat v_i^2\hat \zeta_i^2 -  v_i^2 \zeta_i^2 ] | & \leq | \Enk[ (\hat v_i^2\hat \zeta_i^2 - v_i^2\zeta_i^2)1\{A_i\} ] | + | \Enk[ (\hat v_i^2\hat \zeta_i^2 -  v_i^2 \zeta_i^2)1\{A_i^c\} ] | \\
& \leq | \Enk[ (\hat v_i^2 -  v_i^2) \zeta_i^2 1\{A_i\}] | + | \Enk[   v_i^2(\hat \zeta_i^2 -  \zeta_i^2) 1\{A_i\}]|+\\
& + | \Enk[ (\hat v_i^2 -  v_i^2)(\hat \zeta_i^2 -  \zeta_i^2)1\{A_i\} ]|+o_P(1)\end{array}$$
since $| \Enk[ (\hat v_i^2\hat \zeta_i^2 -  v_i^2 \zeta_i^2)1\{A_i^c\} ] | = o_P(1)$ by Step 6.
Then,
$$\begin{array}{rl}
|\Enk[  v_i^2(\hat \zeta_i^2 - \zeta_i^2)1\{A_i\} ]| & \leq \underset{=:iii_1 } {2\Enk[\{d_i(\alpha_0-\check\alpha_k)\}^2  v_i^2]}+\underset{=:iii_2 } {2\Enk[\{x_i'\check\beta_k - g_i\}^2  v_i^2]}\\
& +\underset{=:iii_3 } {2\max_{i\leq n}|v_i|\{\Enk[ \zeta_i^2 v_i^2]\}^{1/2}\{\Enk[ d_i^2(\alpha_0-\check\alpha_k)^2]\}^{1/2}}\\
& + \underset{=:iii_4 } {2\max_{i\leq n}|v_i|\{\Enk[ \zeta_i^2 v_i^2]\}^{1/2}\{\Enk[ (g_i-x_i'\check\beta_k)^2]\}^{1/2}}\\
\end{array}$$

As a consequence of Condition SM(ii) we have $\Ep[\max_{i\leq n}d_i^2] \lesssim n^{2/q}$, $\Ep[\max_{i\leq n} \zeta_i^2]\lesssim n^{2/q}$, $\Ep[\max_{i\leq n} v_i^2] \lesssim n^{2/q}$, thus by Markov inequality we have $\max_{i\leq n} |d_i| + |\zeta_i| + | v_i|\lesssim_P n^{1/q}$.

We have the following relations:
$$
\begin{array}{rl}
iii_1 & \leq |\alpha_0-\check\alpha_k|^2\Enk[d_i^2] \max_{i\leq n} v_i^2\lesssim_P n^{-1} n^{2/q} = o_P(1),\\
iii_2 &  \leq \max_{i\leq n}  v_i^2\Enk[\{x_i'\check\beta_k - g_i\}^2] \lesssim_P n^{2/q} \{o(n^{-1/4})+\sqrt{\hat s^{k^c}/n}\}^2=o_P(1),\\
iii_3 &  \lesssim_P n^{1/q}\sqrt{1/n} = o_P(1), \\
iii_4& \lesssim_P n^{1/q} \{o(n^{-1/4})+\sqrt{\hat s^{k^c}/n}\} = o_P(1), \\
\end{array}
$$ since $\Enk[ \zeta_i^2 v_i^2] \lesssim_P 1$, $\Enk[\{x_i'\check\beta_k-g_i\}^2]\lesssim_P \{o(n^{-1/4})+\sqrt{\hat s^{k^c}/n}\}^2$ by  Step 4, $\hat s^{k^c}\lesssim_P s$, and $|\check\alpha_k - \alpha_0|^2\lesssim_P 1/n$ by Step 1.

Similarly, $\Enk[ (\hat v_i^2 -  v_i^2) \zeta_i^2 ] =o_P(1)$.
{}

Finally, since $\max_{i\leq n} \|1\{ A_i\}( \hat v_i, \hat \zeta_i, \zeta_i, v_i)'\|_\infty^2\lesssim_P ( H_k^2\vee n^{2/q})\lesssim_P H_k^2$,  we have
$$\begin{array}{rl}
| \Enk[ (\hat v_i^2   -  v_i^2)(\hat \zeta_i^2 -  \zeta_i^2)1\{ A_i\} ]| & \leq  \{\Enk[ (\hat v_i^2   - v_i^2)^21\{A_i\}] \Enk[ (\hat \zeta_i^2   - \zeta_i^2)^21\{A_i\}]\}^{1/2} \\
& \leq  \{\Enk[ 2(\hat v_i^2 + v_i^2)(\hat v_i   - v_i)^21\{ A_i\}] \Enk[ 2(\hat\zeta_i^2 + \zeta_i^2) (\hat \zeta_i   - \zeta_i)^21\{ A_i\}]\}^{1/2} \\
& \lesssim_P ( H_k^2\vee n^{2/q}) \{\Enk[ (\hat v_i   - v_i)^2] \Enk[(\hat \zeta_i   -  \zeta_i)^2]\}^{1/2} \\
& \lesssim_P  H_k^2 \{ o(n^{-1/4}) + \sqrt{\hat s^{k^c}/n}\}^2 \\
&\lesssim \frac{n}{(\hat s^{k^c}\vee n^{1/2})\log n}\{ o(n^{-1/2}) + \hat s^{k^c}/n\}=o(1).\\
\end{array}$$

Step 6.(Controlling large terms) By definition of the event $A_i$ we have
$$ \begin{array}{rl}
H_k^2\Enk[1\{A_i^c\}]& \leq \Enk[\hat\zeta_i^{o2}1\{A_i^c\} ] \\
& \leq 4\Enk[\zeta_i^21\{A_i^c\}] + 4\Enk[d_i^2(\check\alpha_k-\alpha_0)^21\{A_i^c\}] +4\Enk[\{x_i'\check\beta_k-g_i\}^21\{A_i^c\}] \\
&\lesssim_Pn^{2/q}\Enk[1\{A_i^c\}] + n^{2/q-1}\Enk[1\{A_i^c\}]+\Enk[\{x_i'\check\beta_k-g_i\}^2]. \end{array}
$$
Since $n^{1/q} = o_P(H_k)$, and $\Enk[\{x_i'\check\beta_k-g_i\}^2]\lesssim_P o(n^{-1/2}) + \hat s^{k^c}/n$, we have
$$ \Enk[1\{A_i^c\}] \lesssim_P \{o(n^{-1/2}) + \hat s^{k^c}/n\}/H_k^2.$$

Therefore,
$$ \Enk[ \tilde \zeta_i^2\tilde v_i^2 1\{A_i^c\}] \lesssim_P n^{4/q}\Enk[1\{A_i^c\}] \lesssim_P n^{4/q}\{o(n^{-1/2}) + \hat s^{k^c}/n\}/H_k^2.$$

Finally note that
$$\begin{array}{rl}\frac{n^{4/q}\{o(n^{-1/2}) + \hat s^{k^c}/n\}}{H_k^2} & \lesssim \frac{n^{2/q}}{n^{1/2}}\frac{n^{2/q} (\hat s^{k^c}\vee n^{1/2})\log n}{n} +  \frac{n^{2/q} \hat s^{k^c}\log n}{n}\frac{n^{2/q}(\hat s^{k^c}\vee n^{1/2})}{n} = o_P(1)\end{array} $$
since $q > 4$, $\hat s^{k^c}\lesssim_P s$, and $n^{2/q}s\log(n\vee p) = o(n)$ by ASTESS.
Also, by construction, we have $ \Enk[ \hat \zeta_i^2\hat v_i^2 1\{A_i^c\}]=0$.

\end{proof}

\section{Additional Simulation Results}

In this section, we present additional simulation results.  All of the simulation results are based on the structural model
\begin{equation}\label{ModelMCPLMy}
y_i = d_i'\alpha_0 +  x_i'(c_y \beta_0) + \sigma_{y}(d_i,x_i) \zeta_i, \ \  \zeta_i \sim N(0,1)
\end{equation}
where $p = \dim(x_i) = 200$, the covariates $ x \sim N(0,\Sigma)$ with $\Sigma_{kj} = (0.5)^{|j-k|}$, $\alpha_0 = .5$, and
the sample size $n$ is set to $100$.  In each design, we generate
\begin{equation}\label{ModelMCPLMd}
d_i^* =  x_i'(c_d \beta_1)  + \sigma_{d}(x_i) v_i, \ \  v_i \sim N(0,1)
\end{equation}
with E[$\zeta_i v_i] = 0 $.
Inference results for all designs are based on conventional t-tests with standard errors calculated using the heteroscedasticity consistent jackknife variance estimator discussed in \citen{mackinnon:white}.
We set  $\lambda$ according to the algorithm outlined in Appendix A with $1-\conflvl = .95$.  We draw new $x$'s, $\zeta$'s and $v$'s at every replication and draw new $\beta_0$'s and $\beta_1$'s at every replication in the random coefficient designs.

In the first thirteen designs, $\beta_1 = \beta_0$.  We set the constants $c_y$ and $c_d$ to generate desired population values for the reduced form $R^2$'s, i.e. the $R^2$'s for equations  (\ref{ModelMCPLMy}) and (\ref{ModelMCPLMd}).
Let $R^2_y$ be the desired $R^2$ for the regression of $y$ on $x$ and $R^2_d$ be the desired $R^2$ from the regression of $d$ on $x$.  For each equation, we choose $c_y$ and $c_d$ to generate $R^2 = 0, .2, .4, .6,$ and $.8$.  In the heteroscedastic and binary designs discussed below, we choose $c_y$ and $c_d$ based on $R^2$ as if (\ref{ModelMCPLMy}) held with $d_i = d_i^*$ and $v_i$ and $\zeta_i$ were homoscedastic with variance equal to the average variance and label the results by $R^2$ as in the other cases.  In the homoscedastic cases, we set $\sigma_y = \sigma_d = 1$; and in the heteroscedastic cases, the average of $\sigma_d(x_i)$ and the average of $\sigma_y(d_i,x_i)$ are both one.  We set
\begin{align*}
c_d &= \sqrt{\frac{R^2_d}{(1-R^2_d)\beta_0'\Sigma\beta_0}} \\
c_y &= \frac{-(1-R^2_y)\alpha_0 c_d \beta_0'\Sigma\beta_0 + \sqrt{(1-R^2_y)R^2_y \beta_0'\Sigma\beta_0 (\alpha_0^2+1)}}{(1-R^2_y)\beta_0'\Sigma\beta_0}
\end{align*}

\begin{itemize}
\item Design 1. $d_i = d_i^*$, $\beta_{0} = (1,1/2,1/3,1/4,1/5,0,0,0,0,0,1,1/2,1/3,1/4,1/5,0,...,0)'$, $\sigma_y = \sigma_d = 1$.
\item Design 2. $d_i = d_i^*$, $\beta_{0} = (1,1/4,1/9,1/16,1/25,0,0,0,0,0,1,1/4,1/9,1/16,1/25,0,...,0)'$, $\sigma_y = \sigma_d = 1$.
\item Design 22. $d_i = d_i^*$, $\beta_{0,j} = (1/j)^2$, $\sigma_y = \sigma_d = 1$.
\item Design 3.  $d_i = d_i^*$, $\beta_{0} = (1,1/2,1/3,1/4,1/5,0,0,0,0,0,1,1/2,1/3,1/4,1/5,0,...,0)'$, $\sigma_d = \sqrt{\frac{(1+x_i'\beta_0)^2}{\frac{1}{n}\sum_{i=1}^{n}(1+x_i'\beta_0)^2}}$, $\sigma_y = \sqrt{\frac{(1+\alpha_0 d_i + x_i'\beta_0)^2}{\frac{1}{n}\sum_{i=1}^{n}(1+\alpha_0 d_i+x_i'\beta_0)^2}}$.
\item Design 4.  $d_i = d_i^*$, $\beta_{0} = (1,1/4,1/9,1/16,1/25,0,0,0,0,0,1,1/4,1/9,1/16,1/25,0,...,0)'$, $\sigma_d = \sqrt{\frac{(1+x_i'\beta_0)^2}{\frac{1}{n}\sum_{i=1}^{n}(1+x_i'\beta_0)^2}}$, $\sigma_y = \sqrt{\frac{(1+\alpha_0 d_i + x_i'\beta_0)^2}{\frac{1}{n}\sum_{i=1}^{n}(1+\alpha_0 d_i+x_i'\beta_0)^2}}$.
\item Design 44. $d_i = d_i^*$, $\beta_{0,j} = (1/j)^2$, $\sigma_d = \sqrt{\frac{(1+x_i'\beta_0)^2}{\frac{1}{n}\sum_{i=1}^{n}(1+x_i'\beta_0)^2}}$, $\sigma_y = \sqrt{\frac{(1+\alpha_0 d_i + x_i'\beta_0)^2}{\frac{1}{n}\sum_{i=1}^{n}(1+\alpha_0 d_i+x_i'\beta_0)^2}}$.
\item Design 5. $d_i = \mathbf{1}\{d_i^* > 0\}$, $\beta_{0} = (1,1/2,1/3,1/4,1/5,0,0,0,0,0,1,1/2,1/3,1/4,1/5,0,...,0)'$, $\sigma_y = \sigma_d = 1$.
\item Design 6. $d_i = d_i^*$, $\beta_{0,j} \sim N(0,1)$, $\sigma_y = \sigma_d = 1$.
\item Design 7. $d_i = d_i^*$, $\widetilde\beta_{0} = (1,1/2,1/3,1/4,1/5,0,0,0,0,0,1,1/2,1/3,1/4,1/5,0,...,0)'$, $\beta_{0,j} \sim N(0,\widetilde\beta_{0,j}^2)$, $\sigma_y = \sigma_d = 1$.
\item Design 72. $d_i = d_i^*$, $\widetilde\beta_{0} = (1,1/4,1/9,1/16,1/25,0,0,0,0,0,1,1/4,1/9,1/16,1/25,0,...,0)'$, $\beta_{0,j} \sim N(0,\widetilde\beta_{0,j}^2)$, $\sigma_y = \sigma_d = 1$.
\item Design 722. $d_i = d_i^*$, $\widetilde\beta_{0,j} = (1/j)^2$, $\beta_{0,j} \sim N(0,\widetilde\beta_{0,j}^2)$, $\sigma_y = \sigma_d = 1$.
\item Design 8.  $d_i = d_i^*$, $\widetilde\beta_{0,j} = u_j z_{1,j} + (1-u_j) z_{2,j}$, $u_j \sim \textnormal{Bernoulli}(.05)$, $z_{1,j} \sim N(0,25)$, $z_{2,j} \sim N(0,.0025)$, $\sigma_y = \sigma_d = 1$
\item Design 1001. $d_i = d_i^*$, $\beta_{0,j} = \mathbf{1}\{j \in \{2,4,6,...,38,40\}\}$, $\sigma_y = \sigma_d = 1$.
\end{itemize}

In the last thirteen designs, we set the constants $c_y$ and $c_d$ according to
\begin{align*}
c_d &= \sqrt{\frac{R^2_d}{(1-R^2_d)\beta_1'\Sigma\beta_1}} \\
c_y &= \sqrt{\frac{R^2_d}{(1-R^2_d)\beta_0'\Sigma\beta_0}}
\end{align*}
for $R^2_d = 0, .2, .4, .6,$ and $.8$ and $R^2_y = 0, .2, .4, .6,$ and $.8$.

\begin{itemize}
\item Design 1a. $d_i = d_i^*$, $\beta_{0} = (1,1/2,1/3,1/4,1/5,0,0,0,0,0,1,1/2,1/3,1/4,1/5,0,...,0)'$, $\beta_1 = (1,1/2,1/3,1/4,1/5,1/6,1/7,1/8,1/9,1/10,0,...,0)'$, $\sigma_y = \sigma_d = 1$.
\item Design 2a. $d_i = d_i^*$, $\beta_{0} = (1,1/4,1/9,1/16,1/25,0,0,0,0,0,1,1/4,1/9,1/16,1/25,0,...,0)'$, $\beta_1 = (1,1/4,1/9,1/16,1/25,1/36,1/49,1/64,1/81,1/100,0,...,0)'$, $\sigma_y = \sigma_d = 1$.
\item Design 22a. $d_i = d_i^*$, $\beta_{0,j} = (1/j)^2$, $\beta_{1,j} = (1/j)^2$, $\sigma_y = \sigma_d = 1$.
\item Design 3a.  $d_i = d_i^*$, $\beta_{0} = (1,1/2,1/3,1/4,1/5,0,0,0,0,0,1,1/2,1/3,1/4,1/5,0,...,0)'$, $\beta_1 = (1,1/2,1/3,1/4,1/5,1/6,1/7,1/8,1/9,1/10,0,...,0)'$, $\sigma_d = \sqrt{\frac{(1+x_i'\beta_1)^2}{\frac{1}{n}\sum_{i=1}^{n}(1+x_i'\beta_1)^2}}$, $\sigma_y = \sqrt{\frac{(1+\alpha_0 d_i + x_i'\beta_0)^2}{\frac{1}{n}\sum_{i=1}^{n}(1+\alpha_0 d_i+x_i'\beta_0)^2}}$.
\item Design 4a.  $d_i = d_i^*$, $\beta_{0} = (1,1/4,1/9,1/16,1/25,0,0,0,0,0,1,1/4,1/9,1/16,1/25,0,...,0)'$, $\beta_1 = (1,1/4,1/9,1/16,1/25,1/36,1/49,1/64,1/81,1/100,0,...,0)'$, $\sigma_d = \sqrt{\frac{(1+x_i'\beta_1)^2}{\frac{1}{n}\sum_{i=1}^{n}(1+x_i'\beta_1)^2}}$, $\sigma_y = \sqrt{\frac{(1+\alpha_0 d_i + x_i'\beta_0)^2}{\frac{1}{n}\sum_{i=1}^{n}(1+\alpha_0 d_i+x_i'\beta_0)^2}}$.
\item Design 44a. $d_i = d_i^*$, $\beta_{0,j} = (1/j)^2$, $\beta_{1,j} = (1/j)^2$, $\sigma_d = \sqrt{\frac{(1+x_i'\beta_1)^2}{\frac{1}{n}\sum_{i=1}^{n}(1+x_i'\beta_1)^2}}$, $\sigma_y = \sqrt{\frac{(1+\alpha_0 d_i + x_i'\beta_0)^2}{\frac{1}{n}\sum_{i=1}^{n}(1+\alpha_0 d_i+x_i'\beta_0)^2}}$.
\item Design 5a. $d_i = \mathbf{1}\{d_i^* > 0\}$, $\beta_{0} = (1,1/2,1/3,1/4,1/5,0,0,0,0,0,1,1/2,1/3,1/4,1/5,0,...,0)'$, $\beta_1 = (1,1/2,1/3,1/4,1/5,1/6,1/7,1/8,1/9,1/10,0,...,0)'$, $\sigma_y = \sigma_d = 1$.
\item Design 6a. $d_i = d_i^*$, $\beta_{0,j} \sim N(0,1)$, $\beta_{1,j} \sim N(0,1)$, E$[\beta_{0,j}\beta_{1,j}] = .8$, $\sigma_y = \sigma_d = 1$.
\item Design 7a. $d_i = d_i^*$, $\widetilde\beta_{0} = (1,1/2,1/3,1/4,1/5,0,0,0,0,0,1,1/2,1/3,1/4,1/5,0,...,0)'$, $\widetilde\beta_1 = (1,1/2,1/3,1/4,1/5,1/6,1/7,1/8,1/9,1/10,0,...,0)'$, $\beta_{0,j} = \widetilde\beta_{0,j} z_{0,j}$, $\beta_{1,j} = \widetilde\beta_{1,j} z_{1,j}$, $z_{0,j} \sim N(0,1)$, $z_{1,j} \sim N(0,1)$, E$[z_{0,j} z_{1,j}] = .8$, $\sigma_y = \sigma_d = 1$.
\item Design 72a. $d_i = d_i^*$, $\widetilde\beta_{0} = (1,1/4,1/9,1/16,1/25,0,0,0,0,0,1,1/4,1/9,1/16,1/25,0,...,0)'$, $\widetilde\beta_1 = (1,1/4,1/9,1/16,1/25,1/36,1/49,1/64,1/81,1/100,0,...,0)'$, $\beta_{0,j} = \widetilde\beta_{0,j} z_{0,j}$, $\beta_{1,j} = \widetilde\beta_{1,j} z_{1,j}$, $z_{0,j} \sim N(0,1)$, $z_{1,j} \sim N(0,1)$, E$[z_{0,j} z_{1,j}] = .8$, $\sigma_y = \sigma_d = 1$.
\item Design 722a. $d_i = d_i^*$, $\widetilde\beta_{0,j} = (1/j)^2$, $\widetilde\beta_{1,j} = (1/j)^2$, $\beta_{0,j} = \widetilde\beta_{0,j} z_{0,j}$, $\beta_{1,j} = \widetilde\beta_{1,j} z_{1,j}$, $z_{0,j} \sim N(0,1)$, $z_{1,j} \sim N(0,1)$, E$[z_{0,j} z_{1,j}] = .8$, $\sigma_y = \sigma_d = 1$.
\item Design 8a.  $d_i = d_i^*$, $\widetilde\beta_{0,j} = 5 u_j z_{11,j} + .05 (1-u_j) z_{12,j}$, $\widetilde\beta_{1,j} = 5 u_j z_{21,j} + .05 (1-u_j) z_{22,j}$, $u_j \sim \textnormal{Bernoulli}(.05)$, $z_{11,j} \sim N(0,1)$, $z_{12,j} \sim N(0,1)$, $z_{21,j} \sim N(0,1)$, $z_{22,j} \sim N(0,1)$, $\sigma_y = \sigma_d = 1$
\item Design 1001a. $d_i = d_i^*$, $\beta_{0,j} = \mathbf{1}\{j \in \{2,4,6,...,38,40\}\}$, $\beta_{1,j} = \mathbf{1}\{j \in \{1,3,5,...,37,39\}\}$, $\sigma_y = \sigma_d = 1$.
\end{itemize}

Results are summarized in figures and tables below.  In the tables, we report results for the four estimators considered in the main text (Oracle, Double-Selection Oracle, Post-Lasso, and Double-Selection).  We also report results for regular Lasso (Lasso), the union of the Double-Selection interval with the Post-Lasso interval (Double-Selection Union ADS), using the union of the set of variables selected by Double-Selection and the set of variables selected by running Lasso of $y$ on $d$ and $x$ without penalizing $d$ (Double-Selection + I3), and the split-sample procedure discussed in the text (Split-Sample).  For Double-Selection Union ADS, the point estimate is taken as the midpoint of the union of the intervals.

\pagebreak

\begin{figure}
\includegraphics[width=\textwidth]{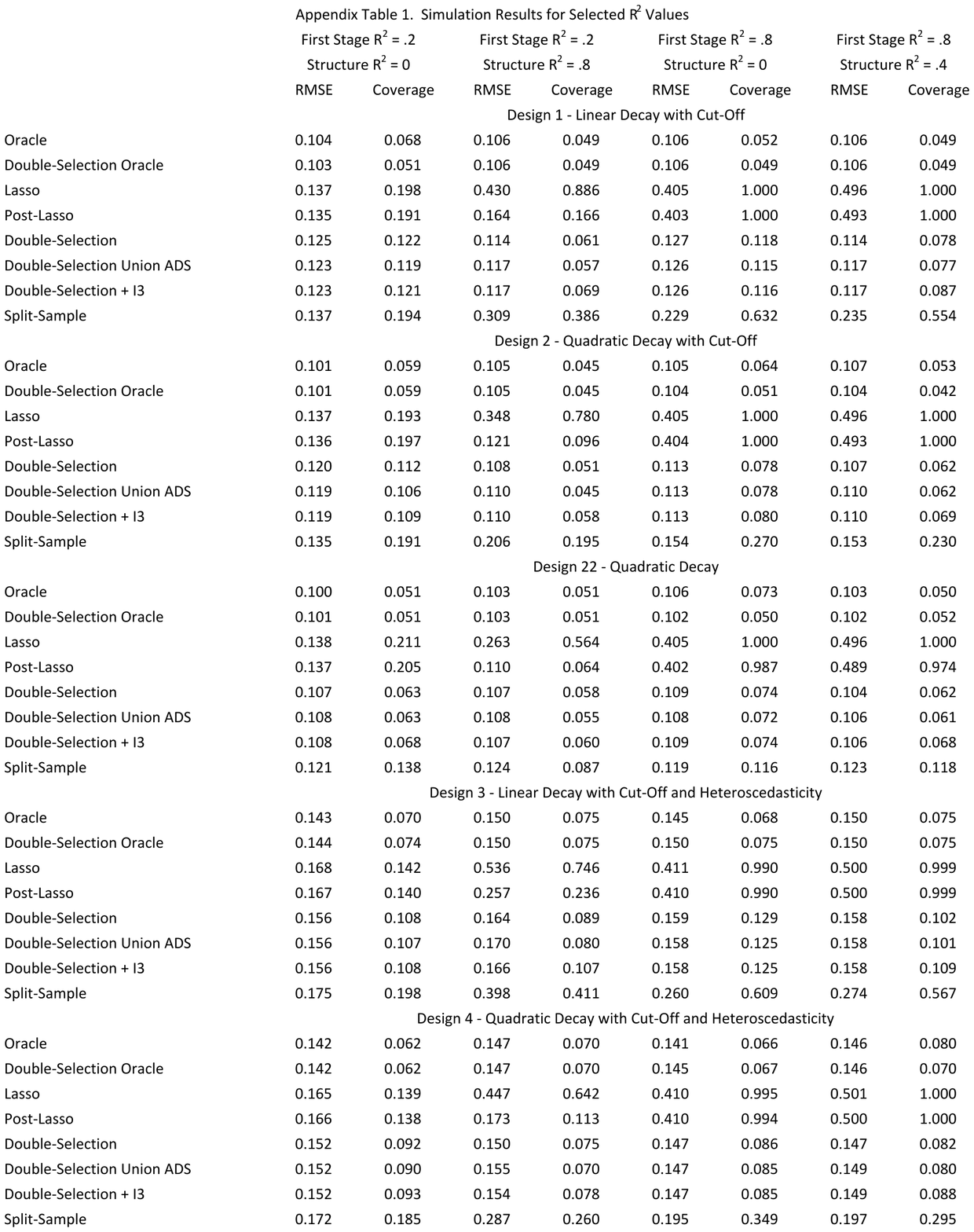}
	\label{fig:Table1p1}
\end{figure}

\pagebreak

\begin{figure}
\includegraphics[width=\textwidth]{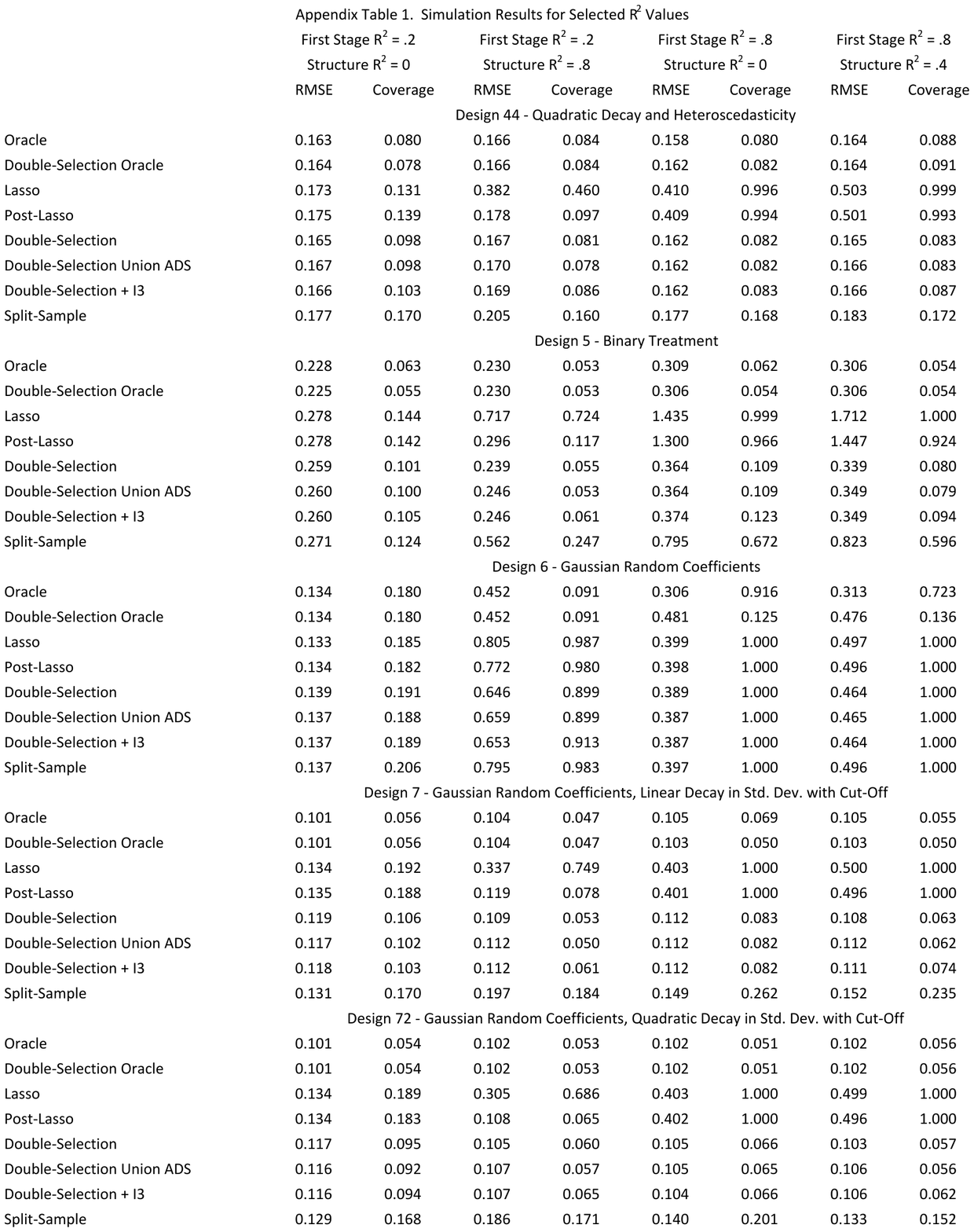}
	\label{fig:Table1p2}
\end{figure}

\pagebreak

\begin{figure}
\includegraphics[width=\textwidth]{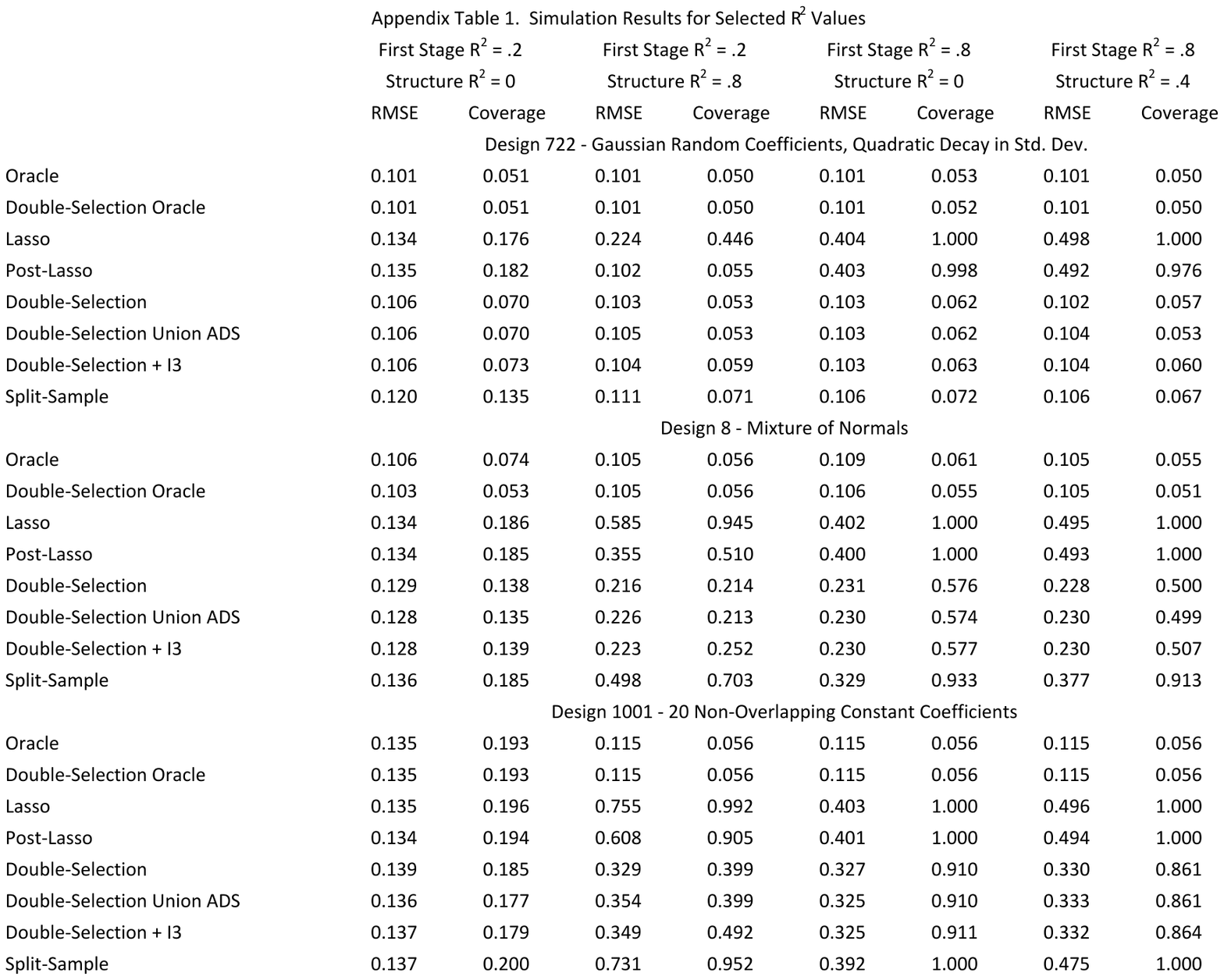}
	\label{fig:Table1p3}
\end{figure}

\pagebreak

\begin{figure}
\includegraphics[width=\textwidth]{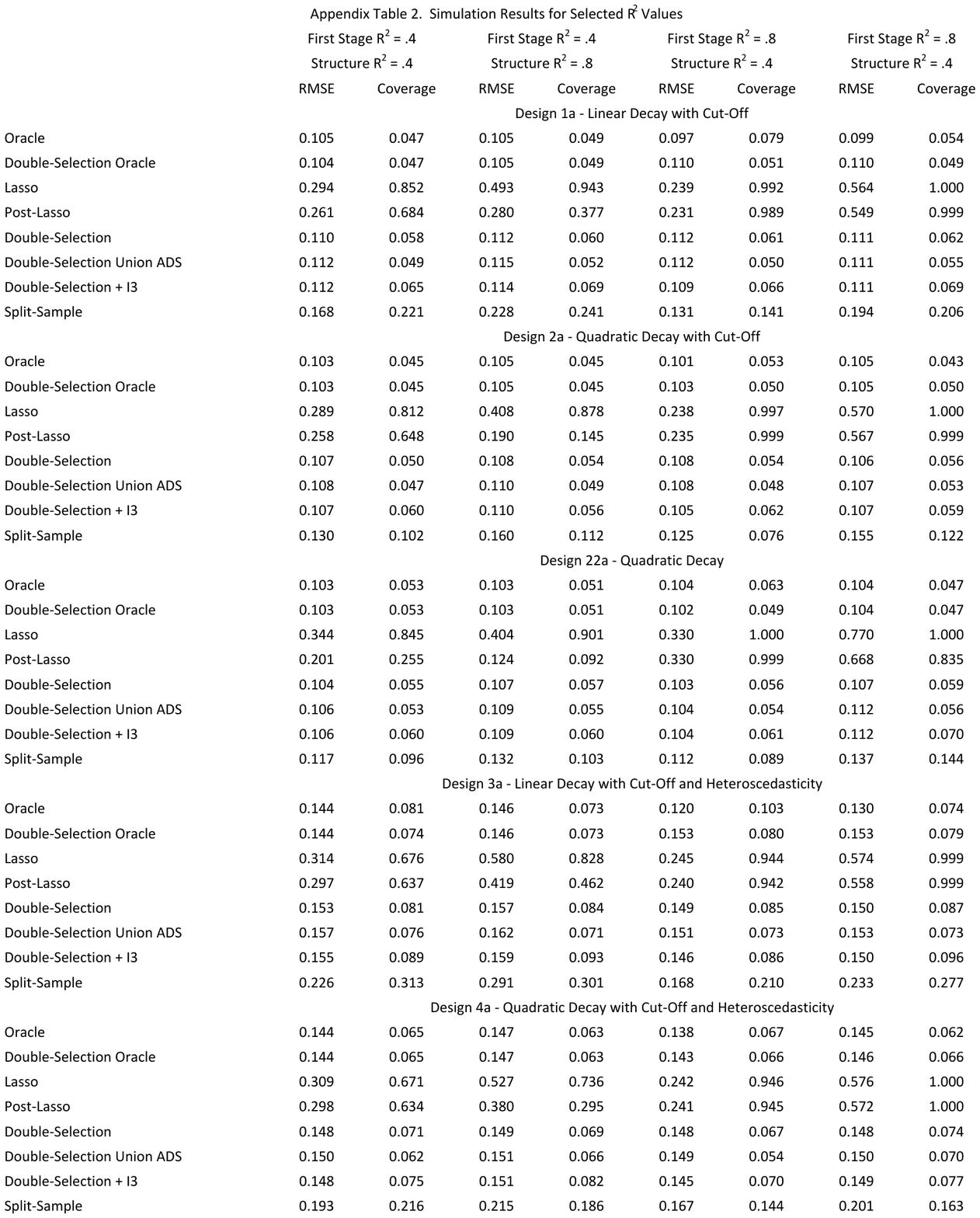}
	\label{fig:Table2p1}
\end{figure}

\pagebreak

\begin{figure}
\includegraphics[width=\textwidth]{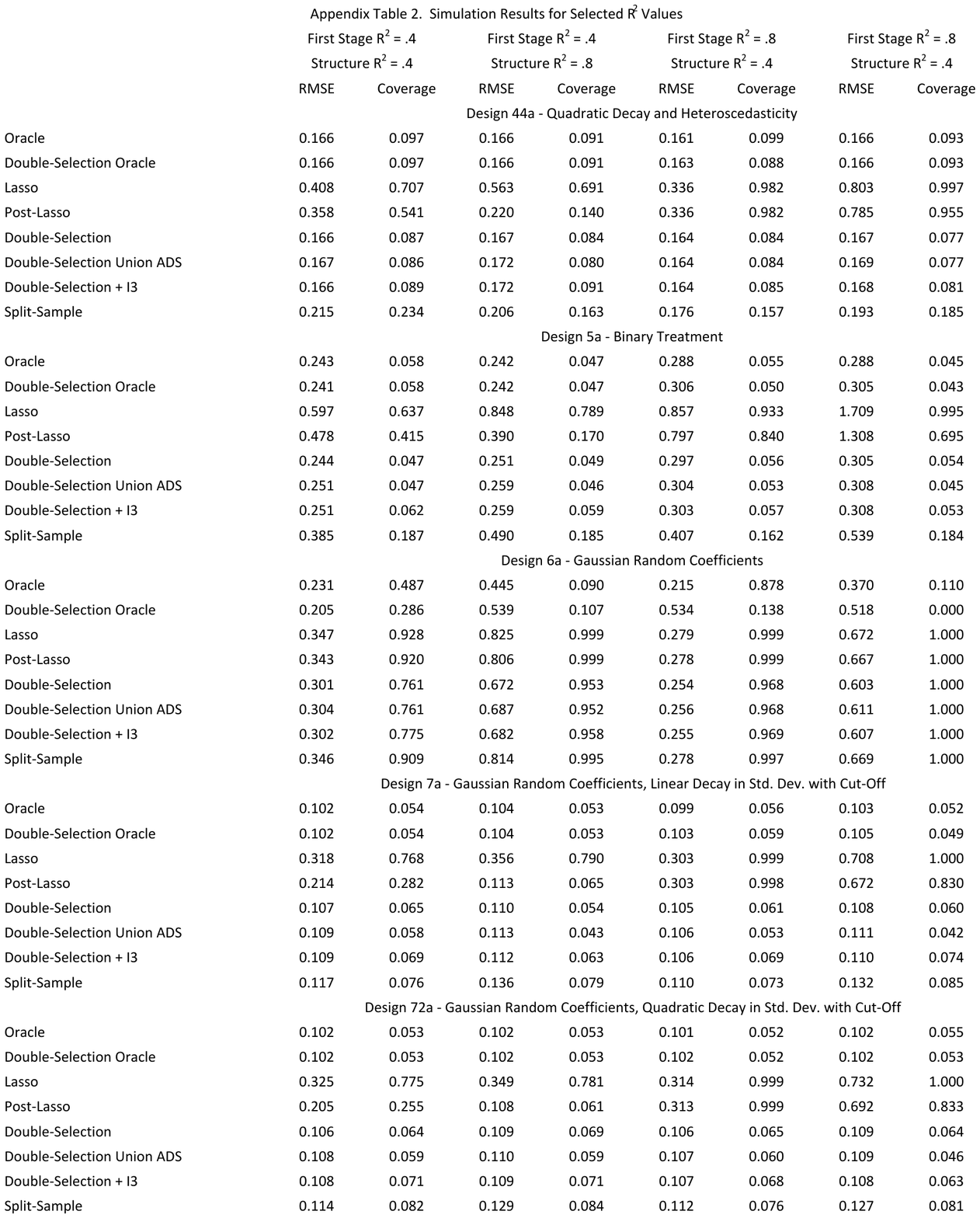}
	\label{fig:Table2p2}
\end{figure}

\pagebreak

\begin{figure}
\includegraphics[width=\textwidth]{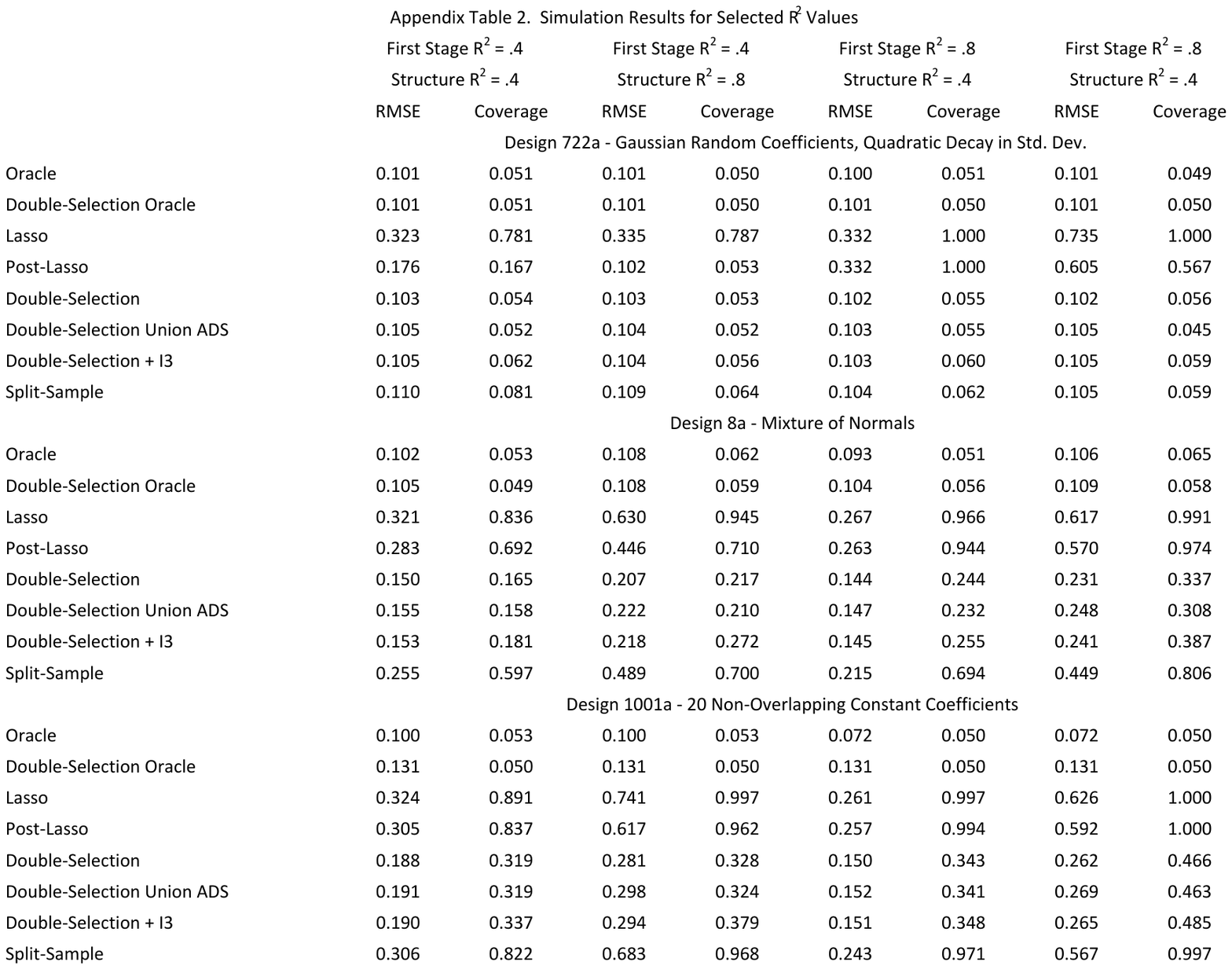}
	\label{fig:Table2p3}
\end{figure}

\pagebreak

\begin{figure}
\includegraphics[width=\textwidth]{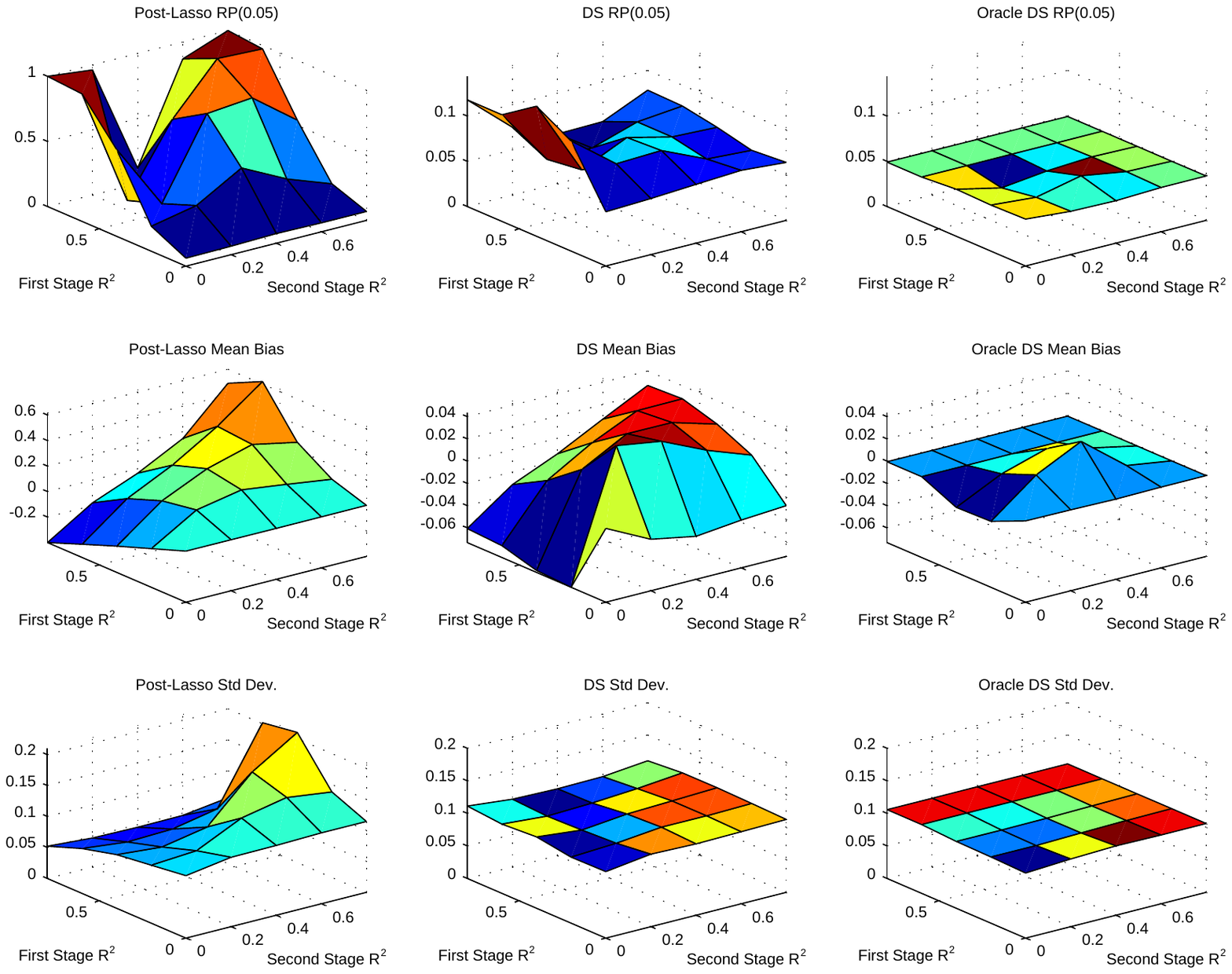}
	\label{fig:figure1}
\caption{Design 1}
\end{figure}

\pagebreak

\begin{figure}
\includegraphics[width=\textwidth]{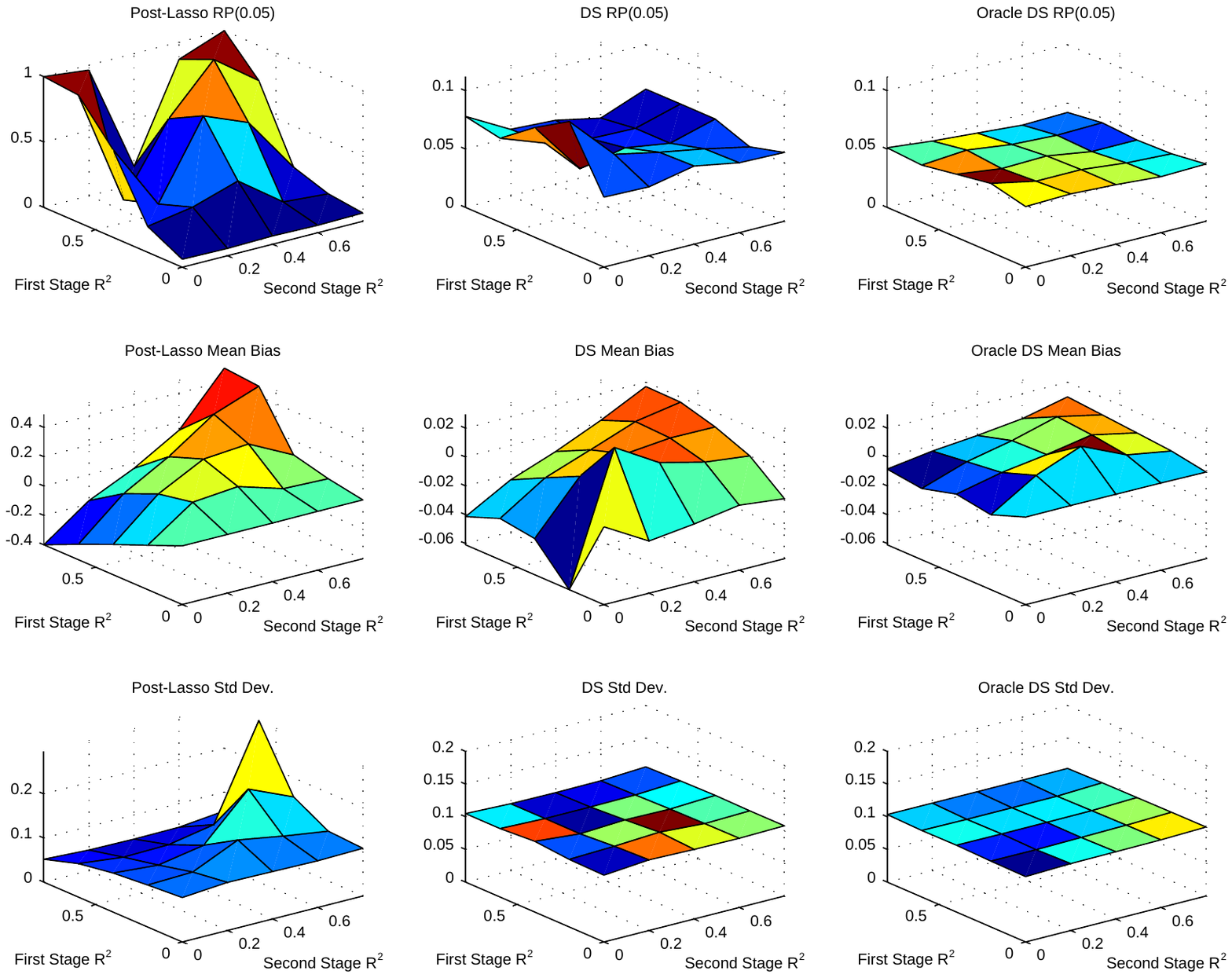}
	\label{fig:figure2}
\caption{Design 2}
\end{figure}

\pagebreak

\begin{figure}
\includegraphics[width=\textwidth]{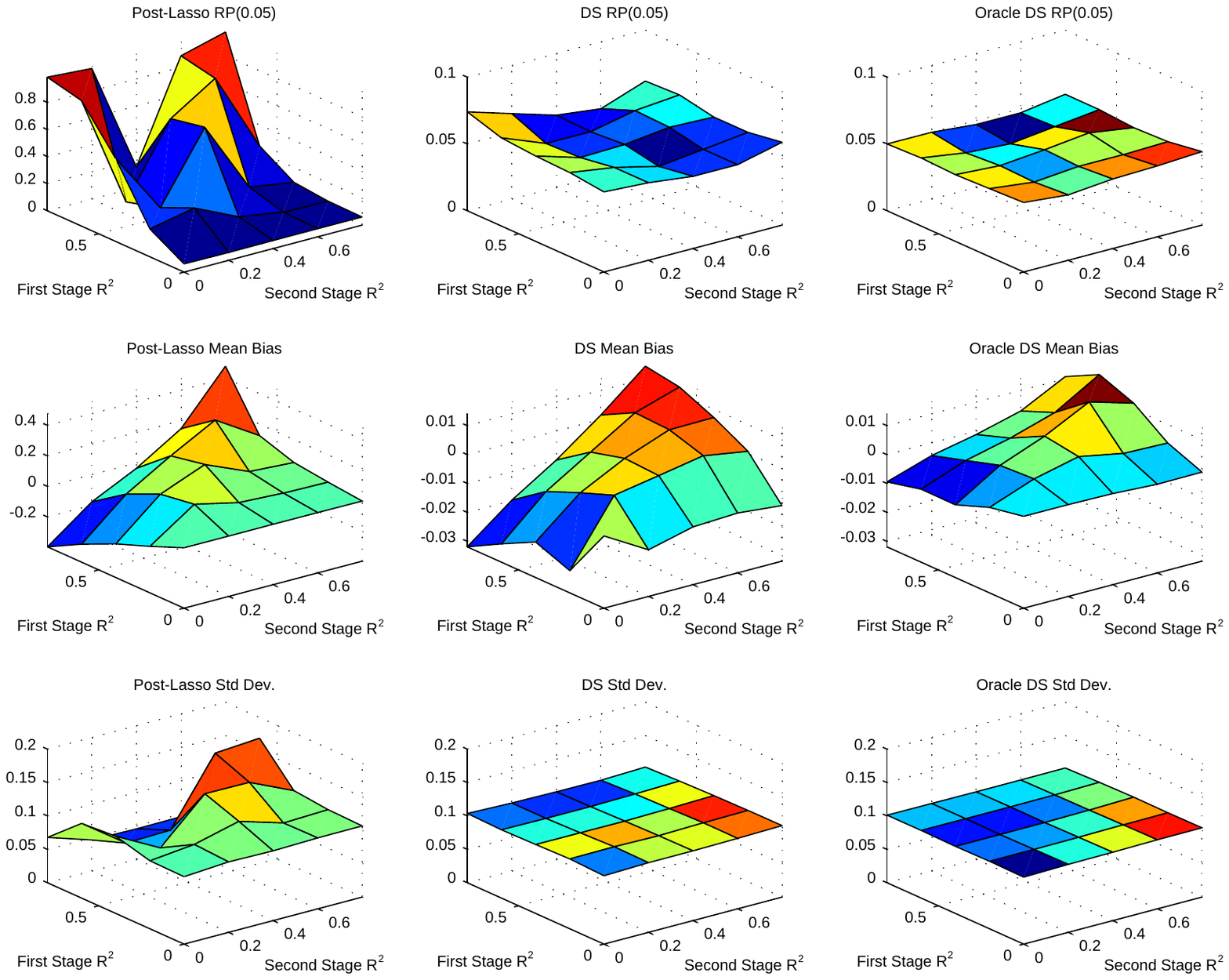}
	\label{fig:figure3}
\caption{Design 22}
\end{figure}

\pagebreak

\begin{figure}
\includegraphics[width=\textwidth]{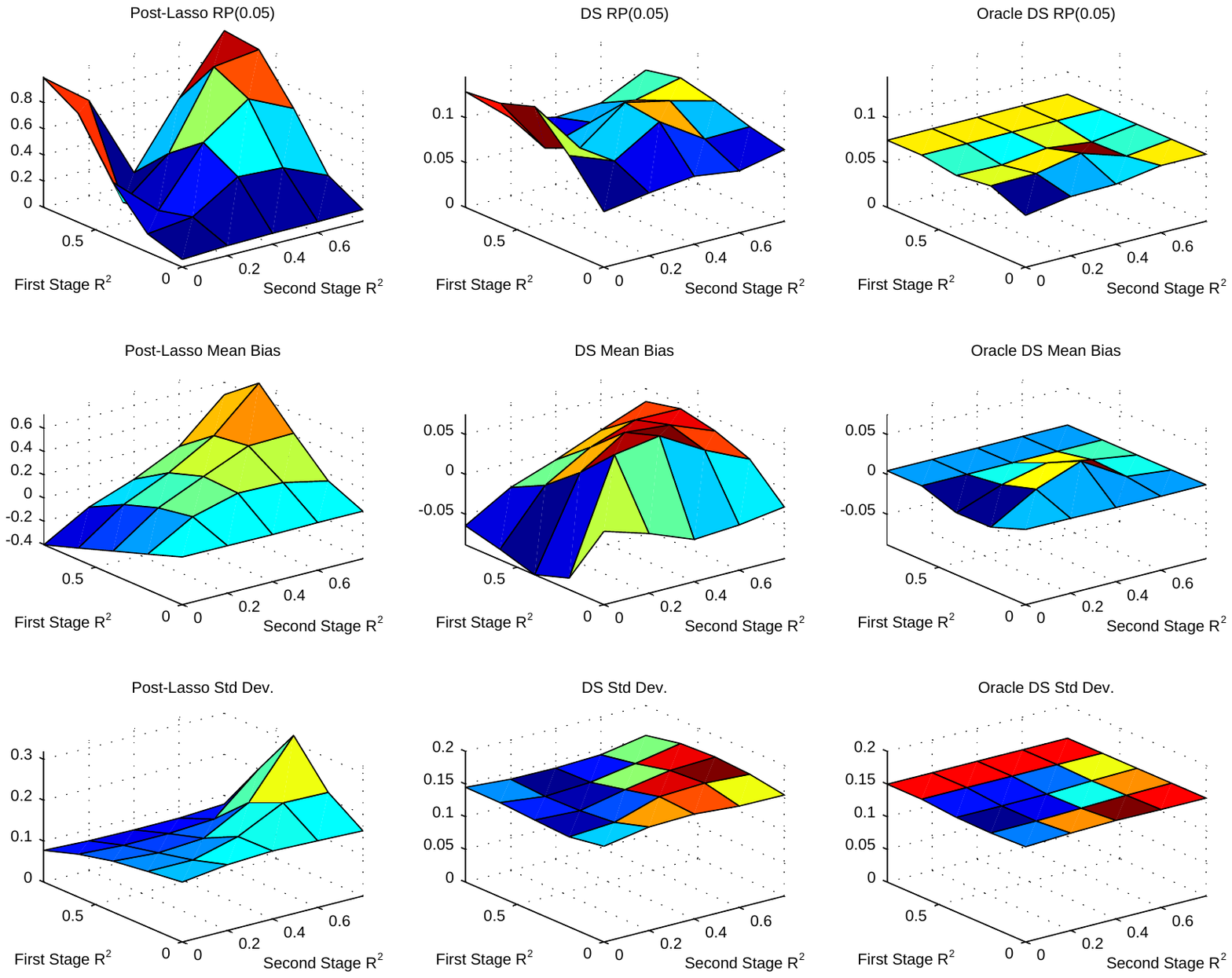}
	\label{fig:figure4}
\caption{Design 3}
\end{figure}

\pagebreak

\begin{figure}
\includegraphics[width=\textwidth]{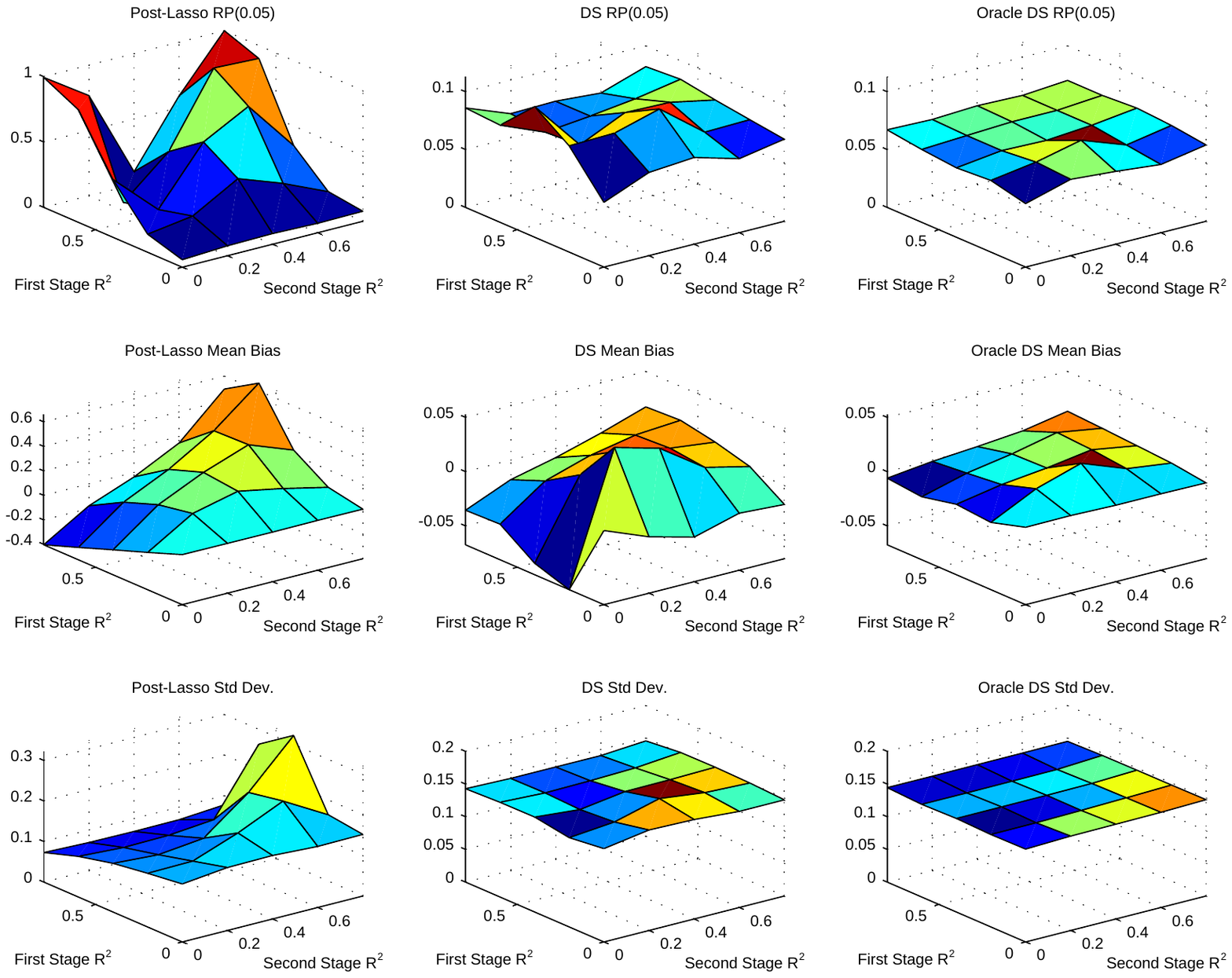}
	\label{fig:figure5}
\caption{Design 4}
\end{figure}

\pagebreak

\begin{figure}
\includegraphics[width=\textwidth]{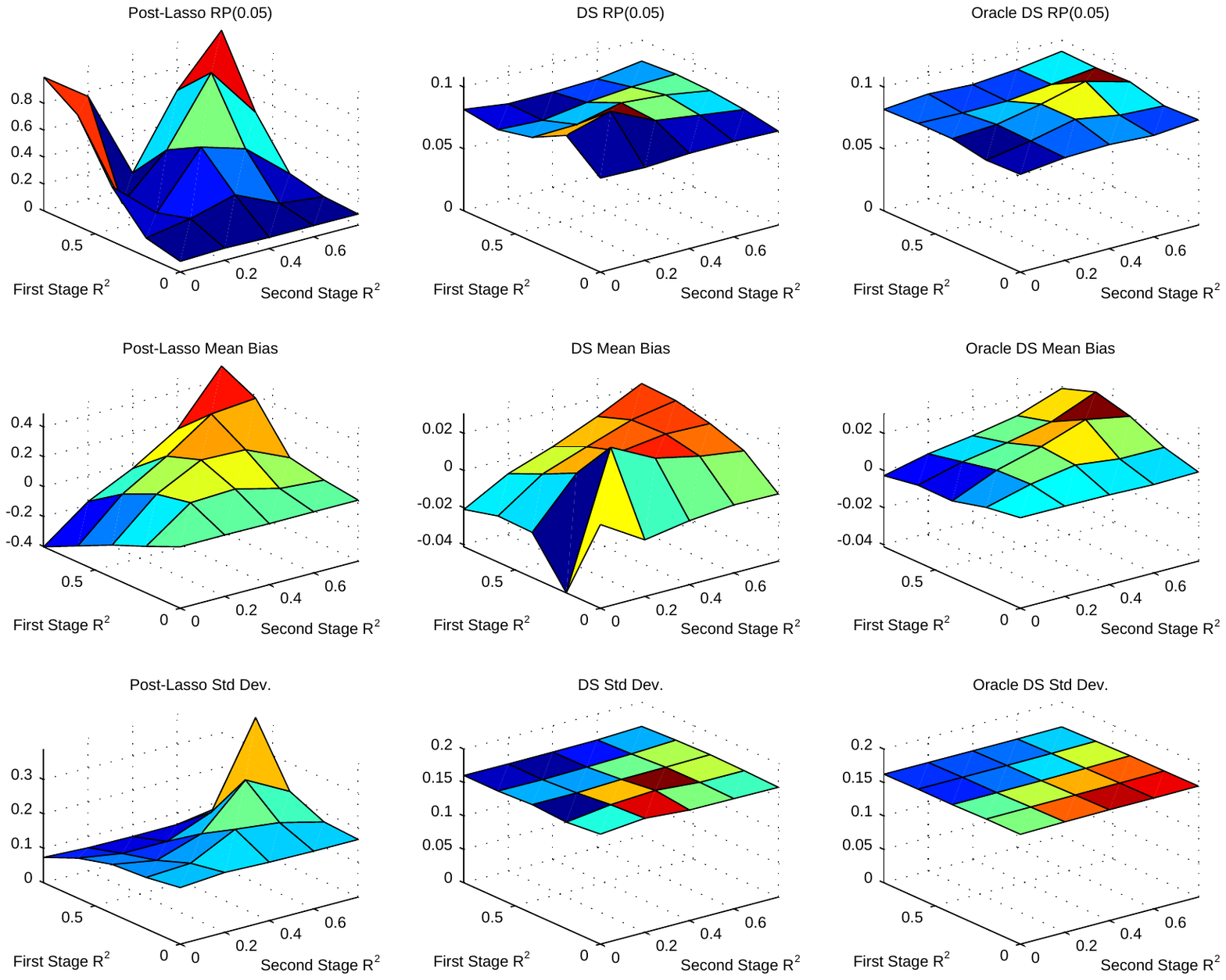}
	\label{fig:figure6}
\caption{Design 44}
\end{figure}

\pagebreak

\begin{figure}
\includegraphics[width=\textwidth]{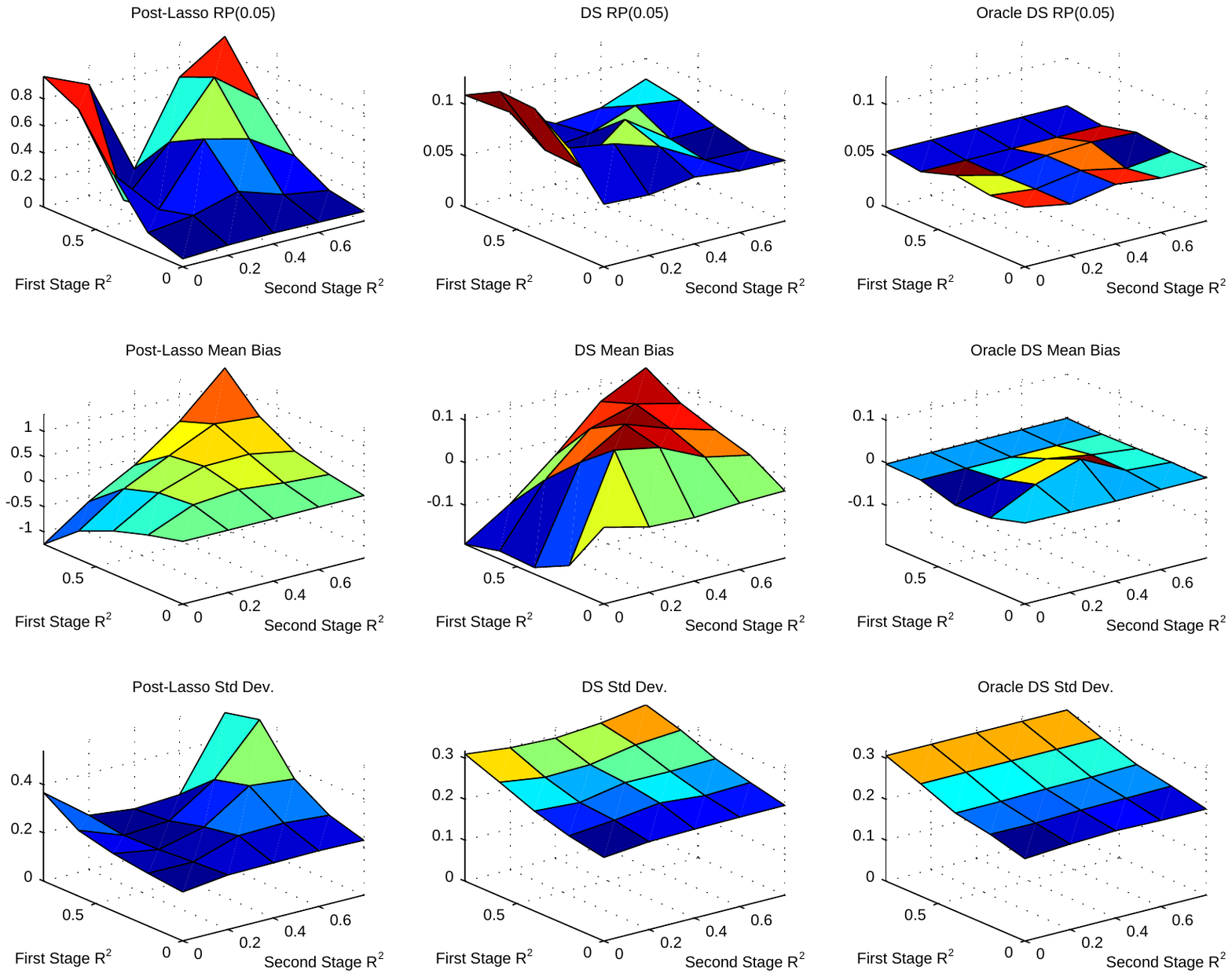}
	\label{fig:figure7}
\caption{Design 5}
\end{figure}

\pagebreak

\begin{figure}
\includegraphics[width=\textwidth]{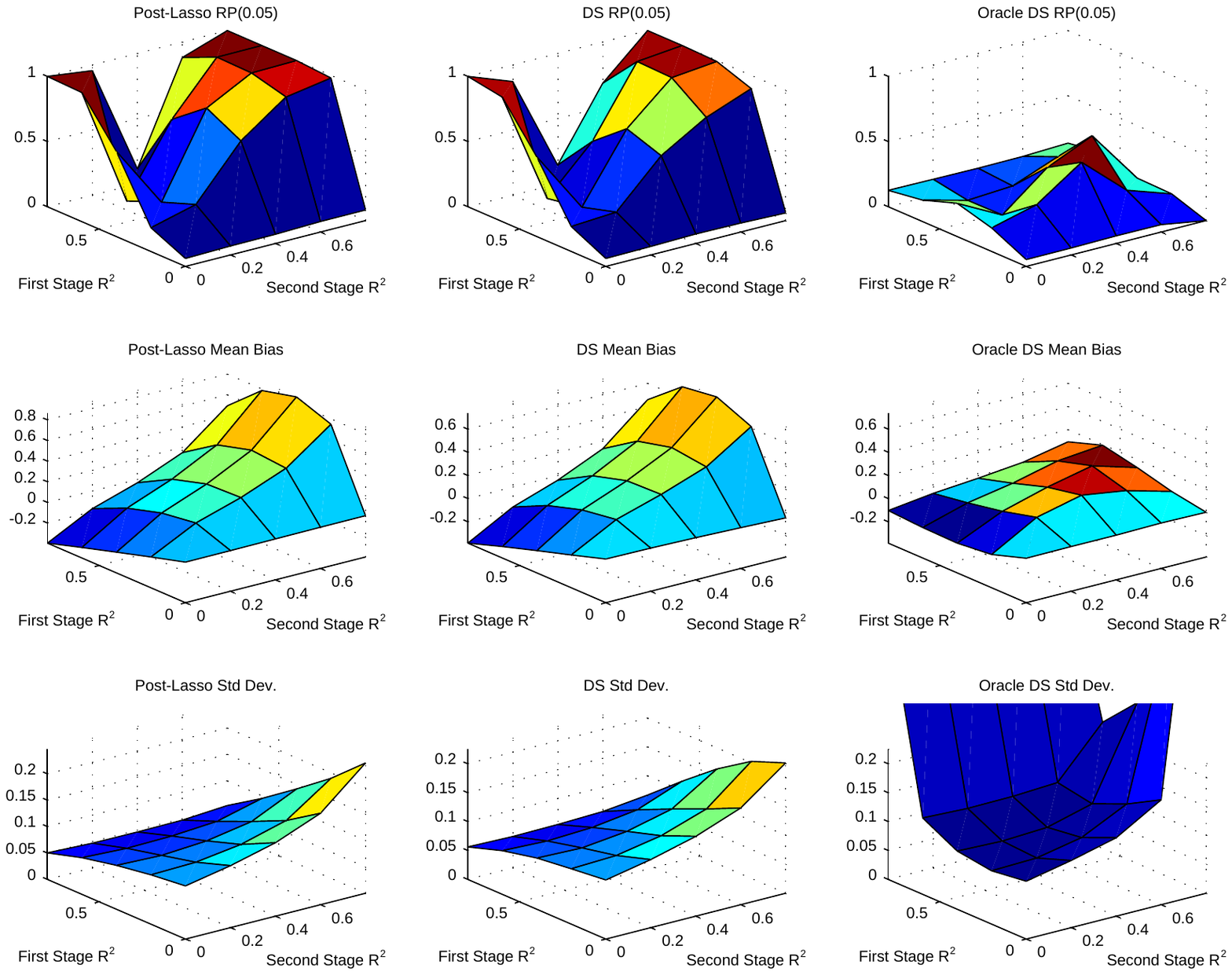}
	\label{fig:figure8}
\caption{Design 6}
\end{figure}

\pagebreak

\begin{figure}
\includegraphics[width=\textwidth]{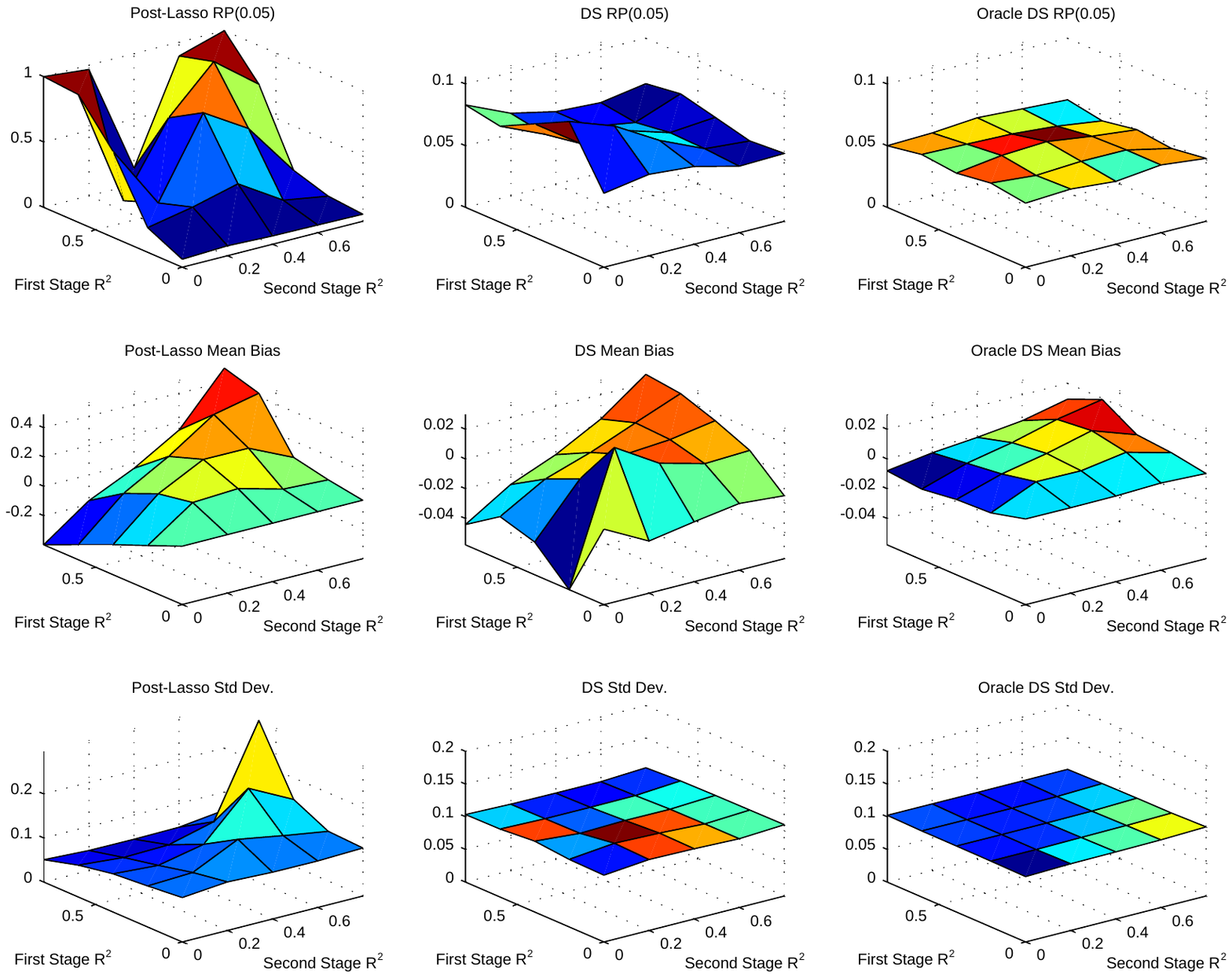}
	\label{fig:figure9}
\caption{Design 7}
\end{figure}

\pagebreak

\begin{figure}
\includegraphics[width=\textwidth]{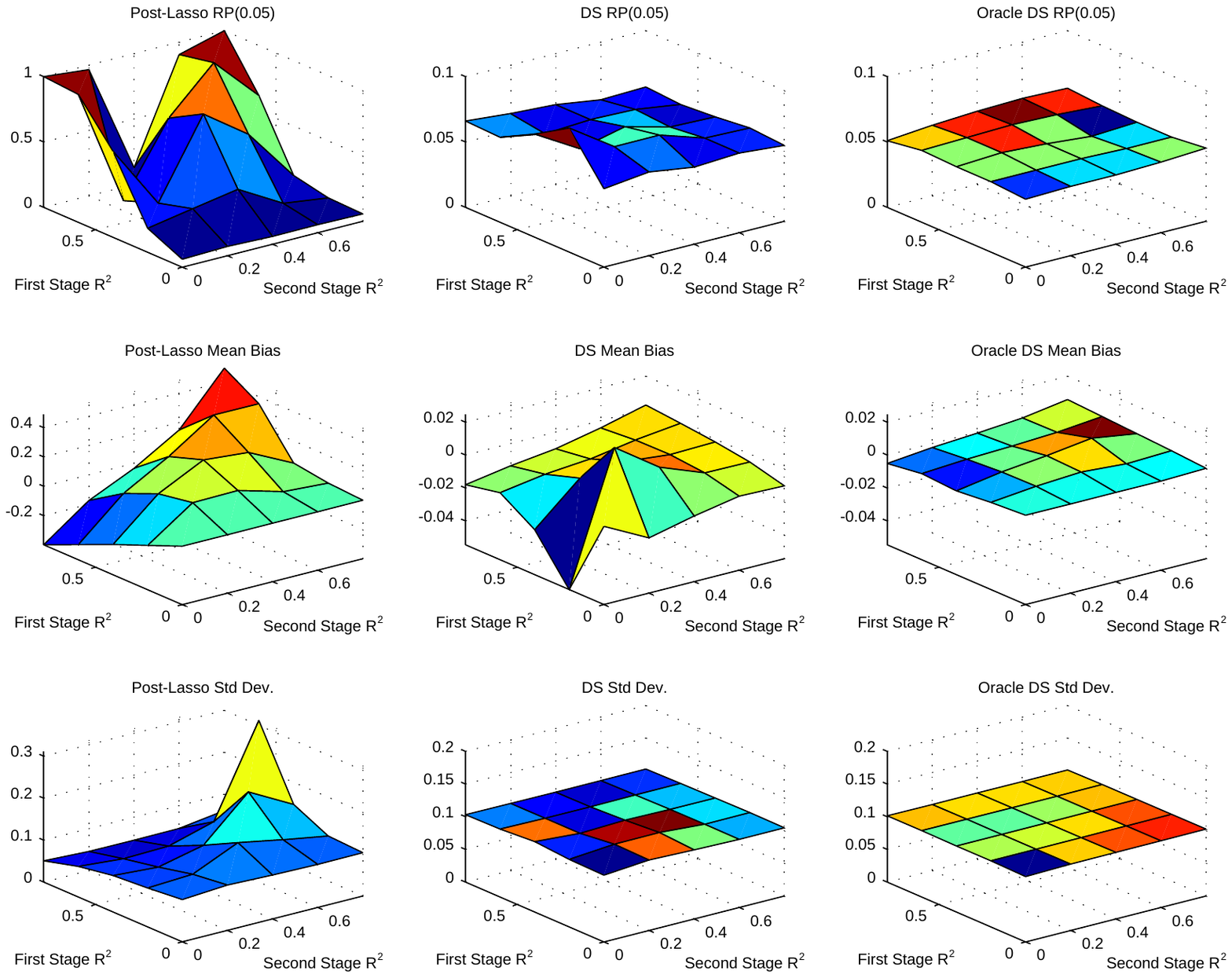}
	\label{fig:figure10}
\caption{Design 72}
\end{figure}

\pagebreak

\begin{figure}
\includegraphics[width=\textwidth]{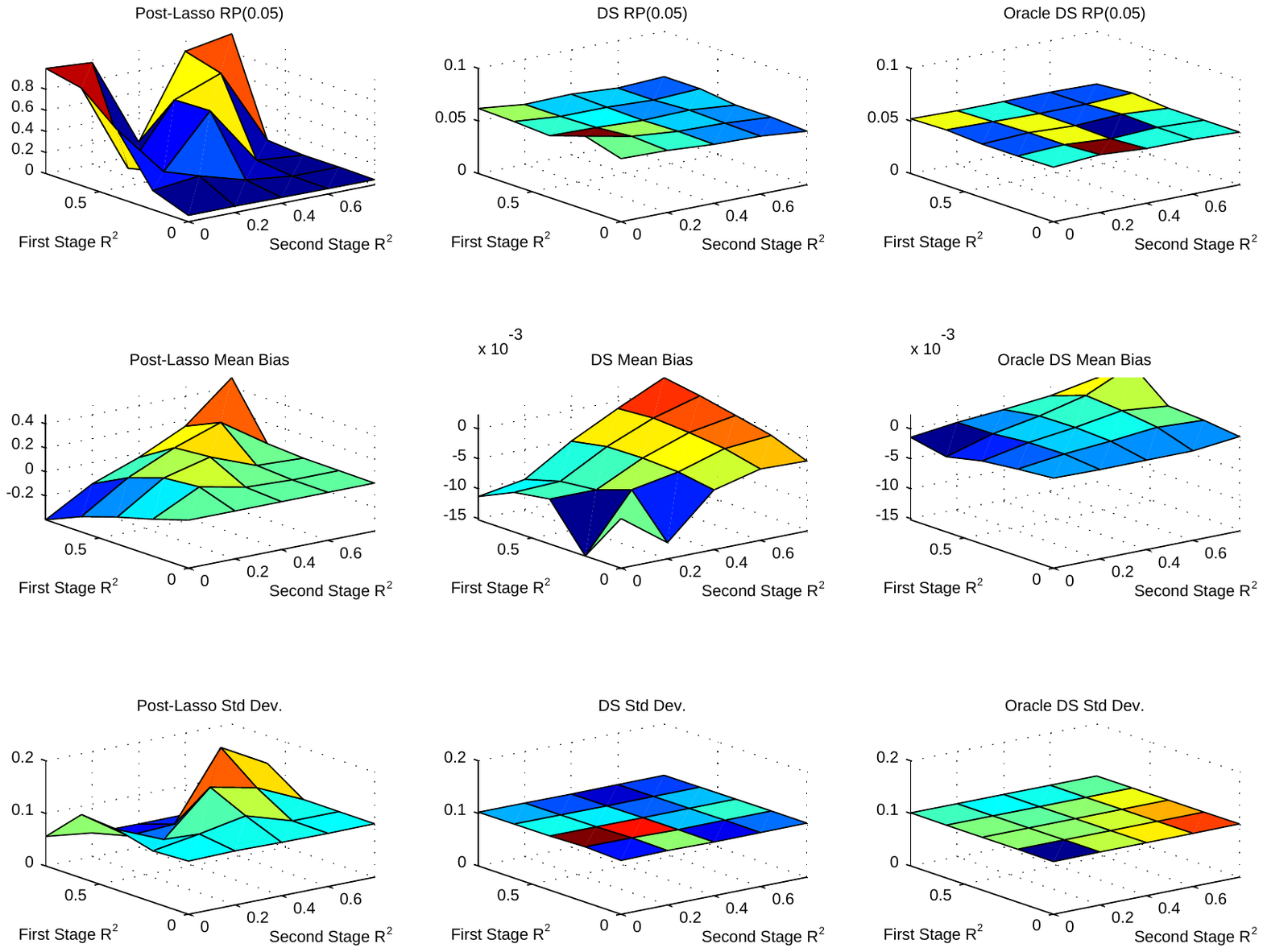}
	\label{fig:figure11}
\caption{Design 722}
\end{figure}

\pagebreak

\begin{figure}
\includegraphics[width=\textwidth]{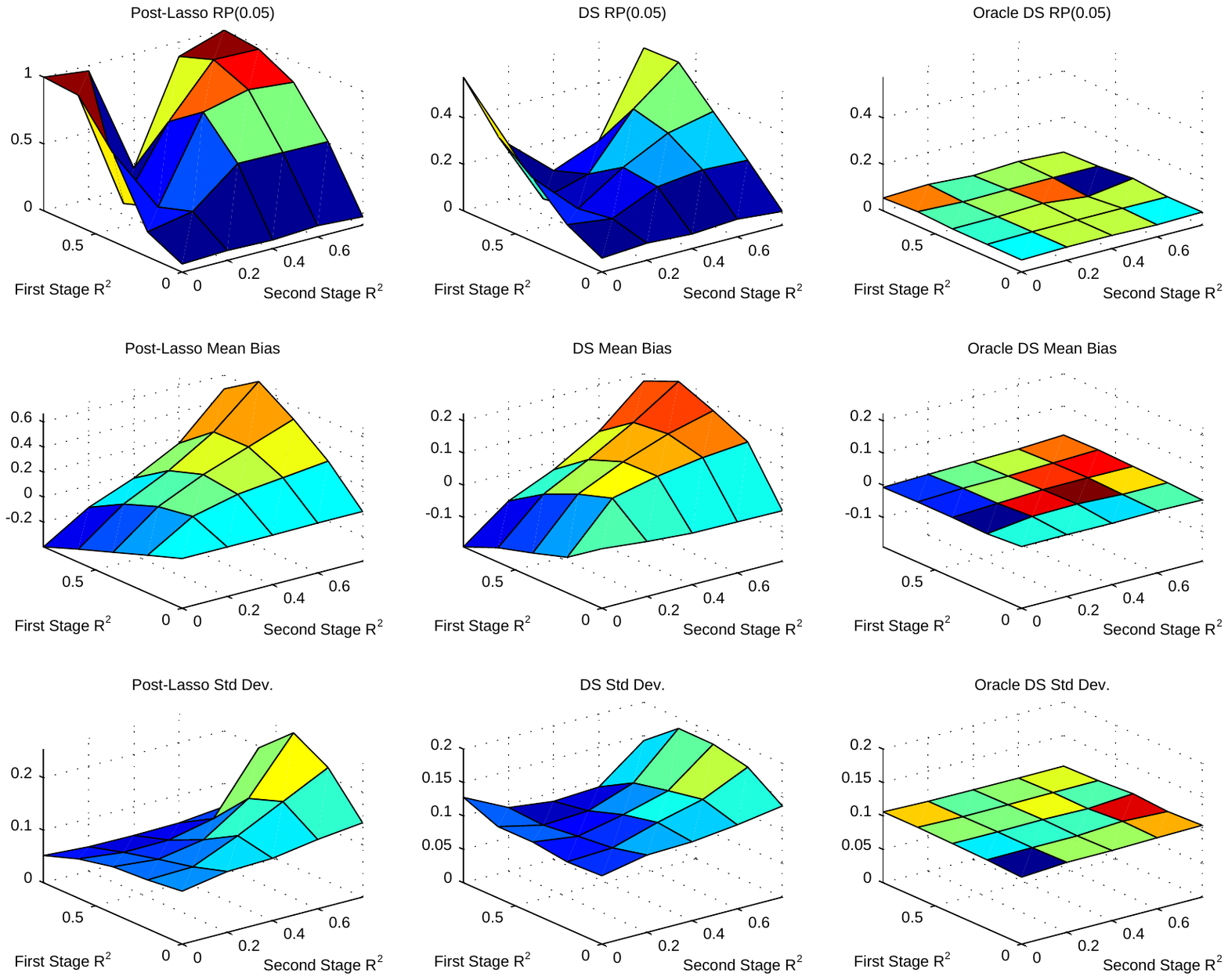}
	\label{fig:figure12}
\caption{Design 8}
\end{figure}

\pagebreak
\clearpage

\begin{figure}
\includegraphics[width=\textwidth]{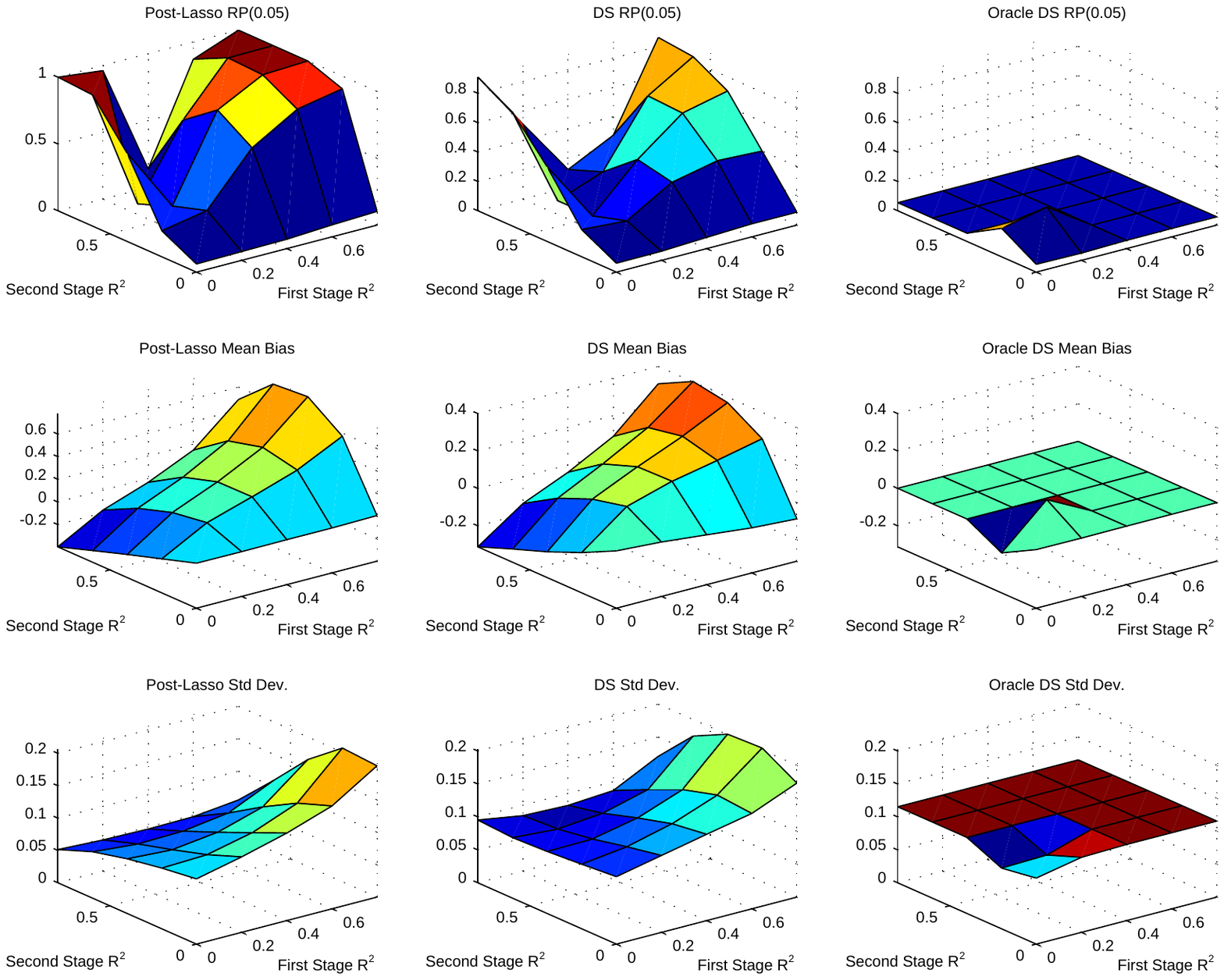}
	\label{fig:figure13}
\caption{Design 1001}
\end{figure}

\pagebreak

\begin{figure}
\includegraphics[width=\textwidth]{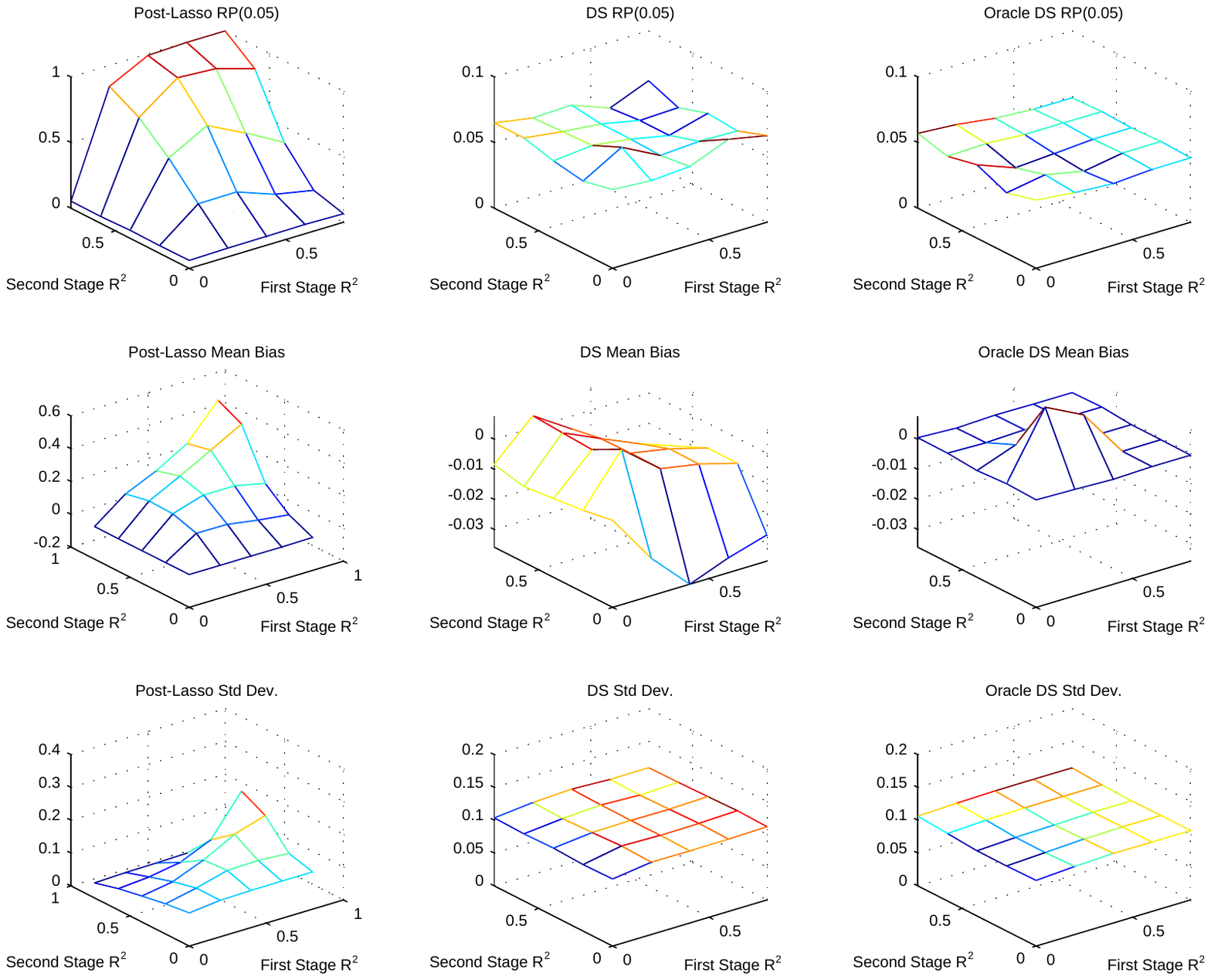}
	\label{fig:figure14}
\caption{Design 1a}
\end{figure}

\pagebreak

\begin{figure}
\includegraphics[width=\textwidth]{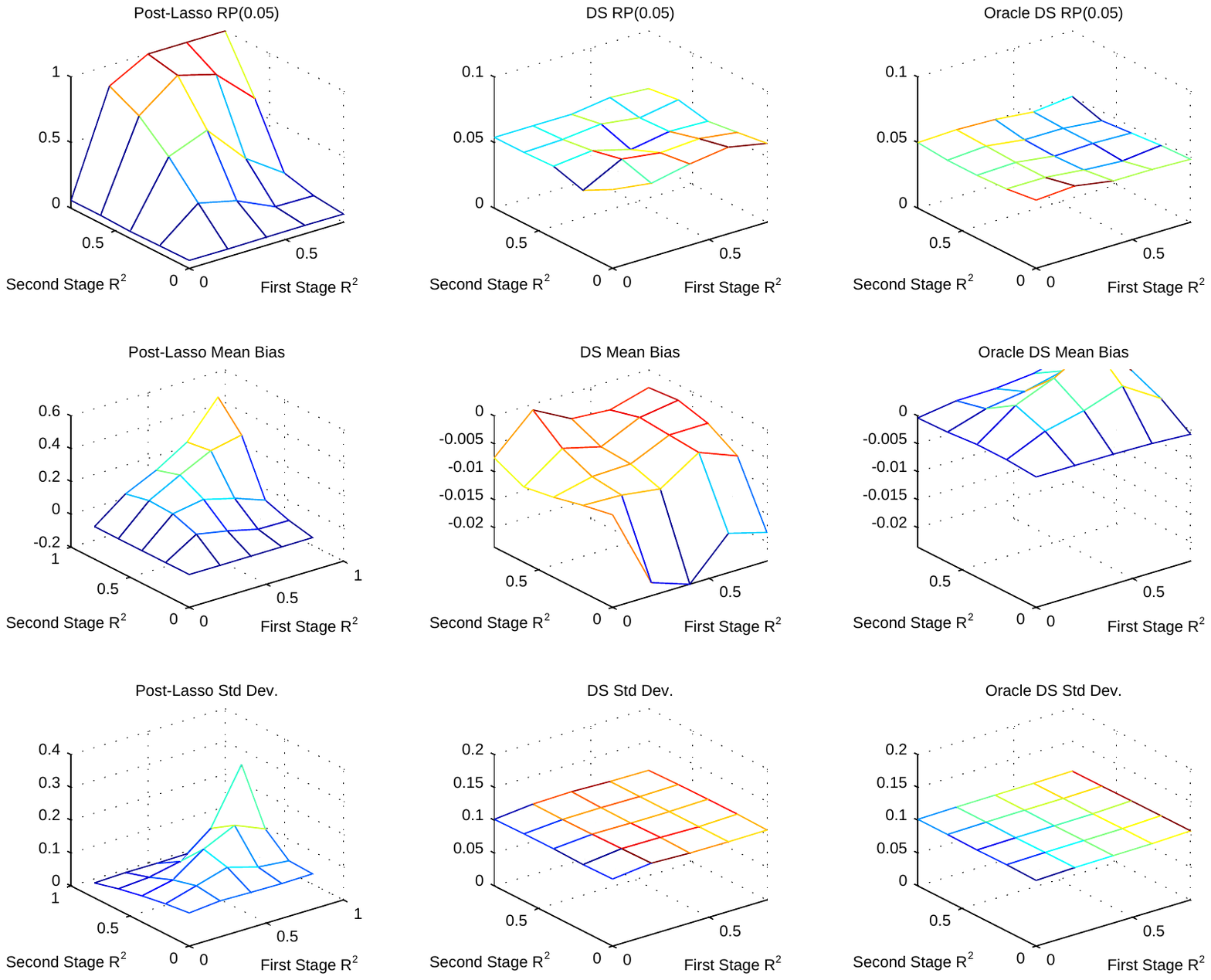}
	\label{fig:figure15}
\caption{Design 2a}
\end{figure}

\pagebreak

\begin{figure}
\includegraphics[width=\textwidth]{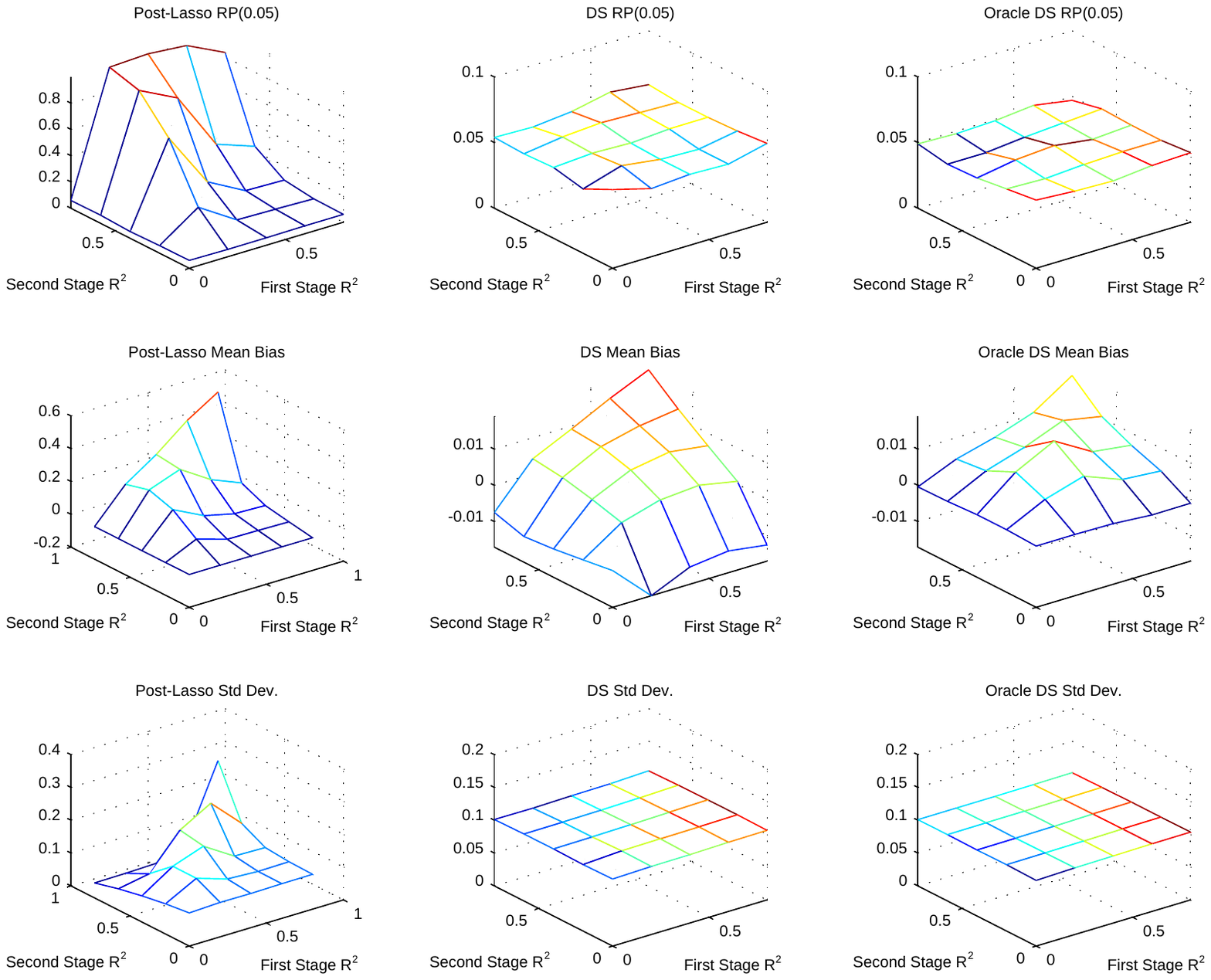}
	\label{fig:figure16}
\caption{Design 22a}
\end{figure}

\pagebreak

\begin{figure}
\includegraphics[width=\textwidth]{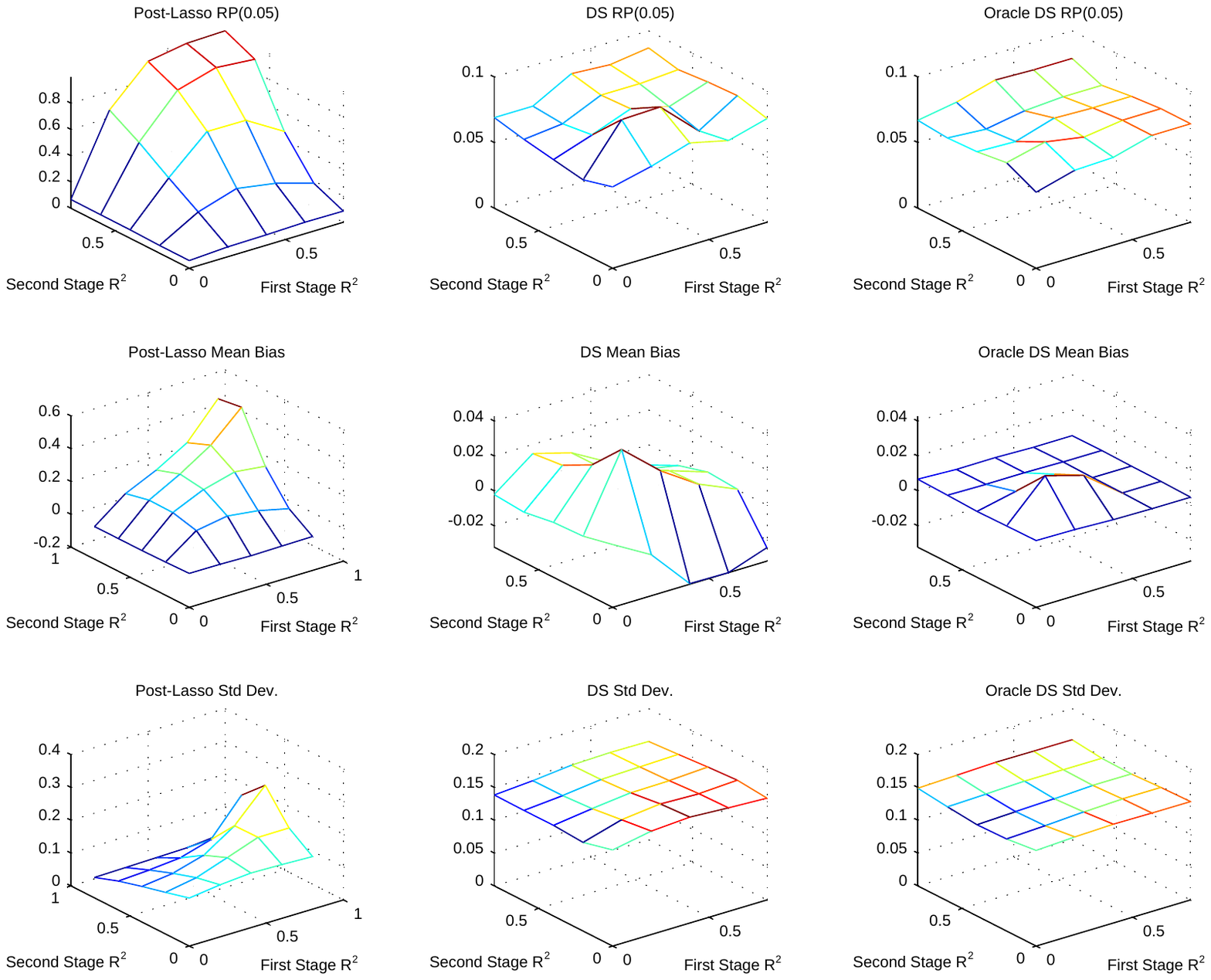}
	\label{fig:figure17}
\caption{Design 3a}
\end{figure}

\pagebreak

\begin{figure}
\includegraphics[width=\textwidth]{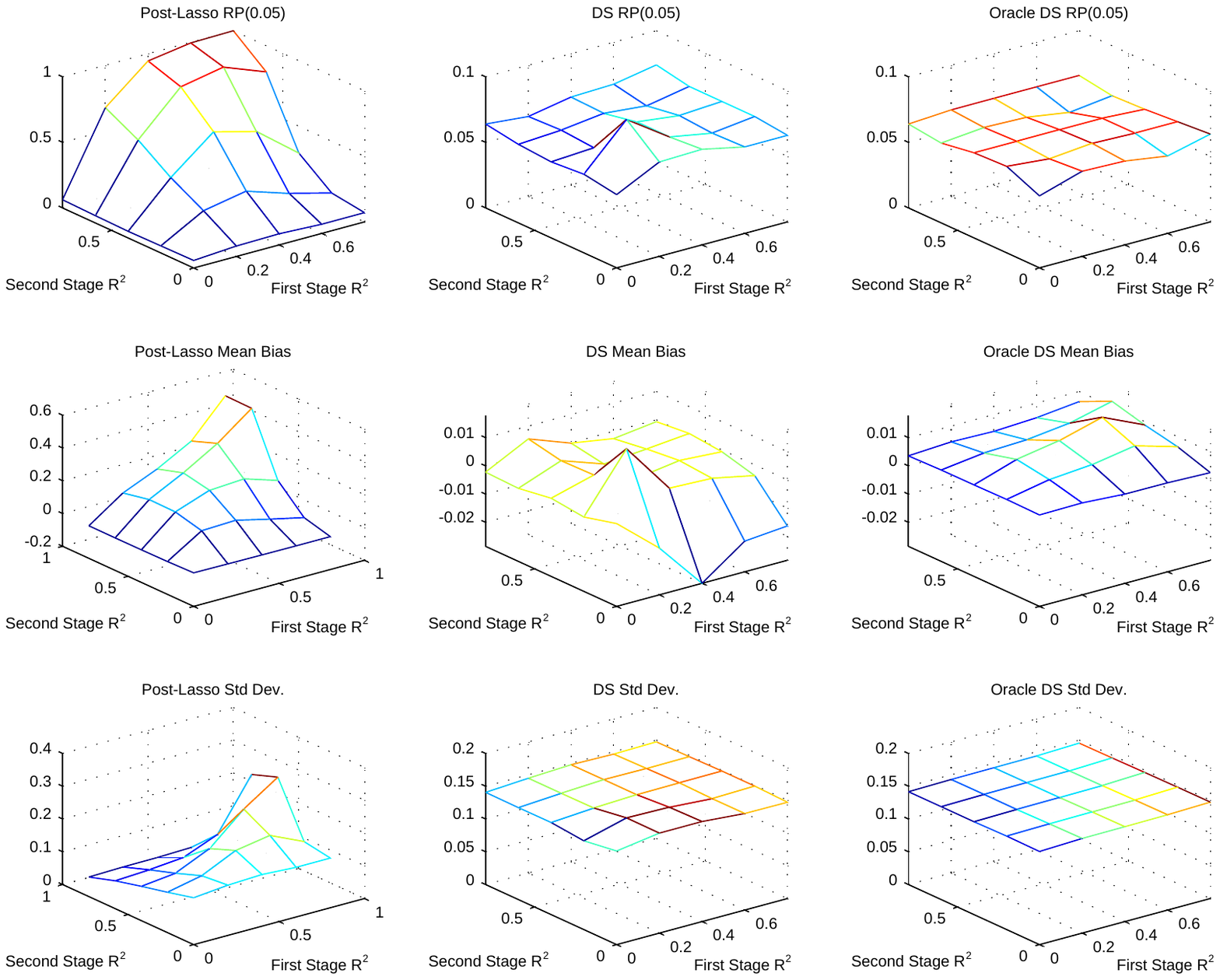}
	\label{fig:figure18}
\caption{Design 4a}
\end{figure}

\pagebreak

\begin{figure}
\includegraphics[width=\textwidth]{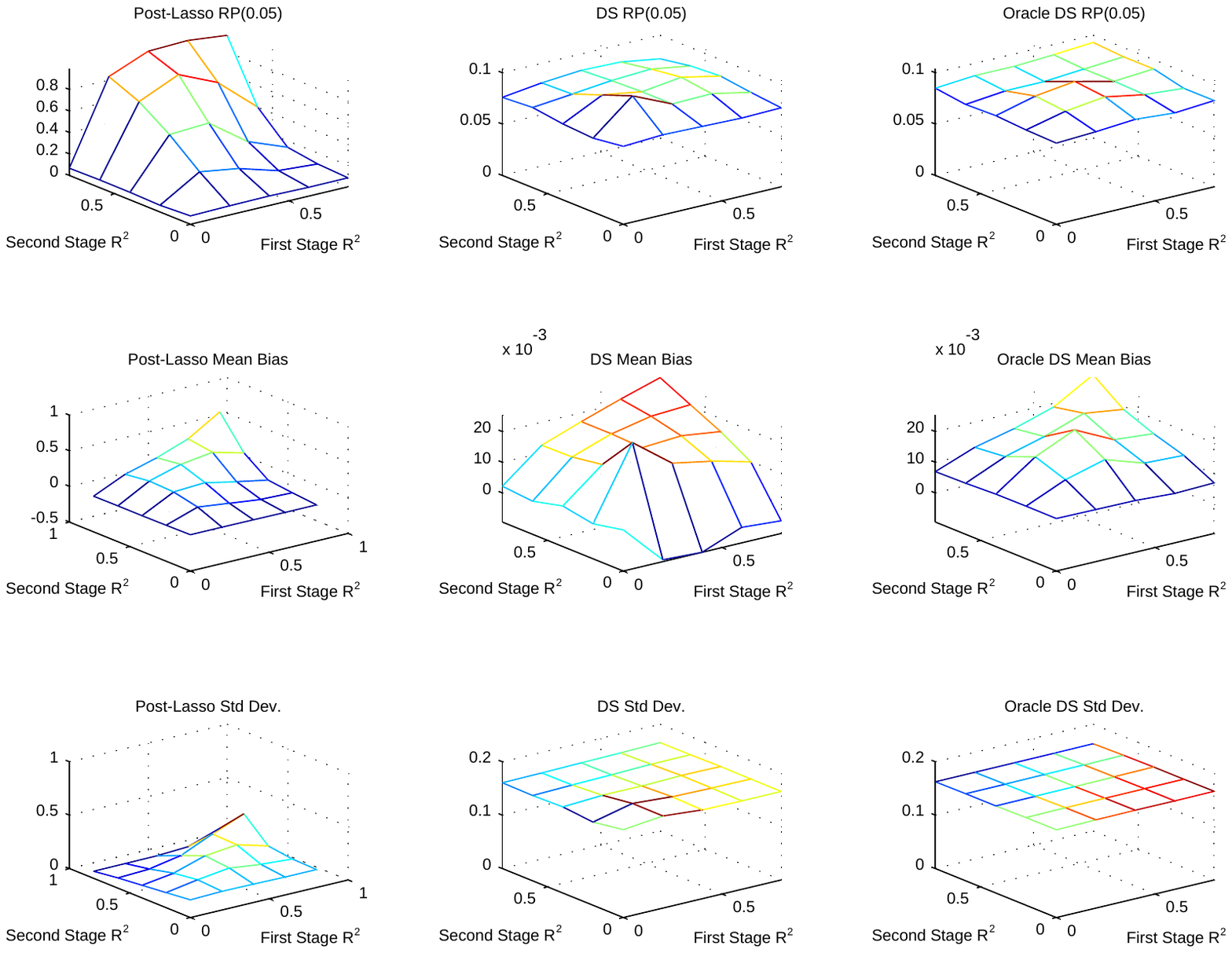}
	\label{fig:figure19}
\caption{Design 44a}
\end{figure}

\pagebreak

\begin{figure}
\includegraphics[width=\textwidth]{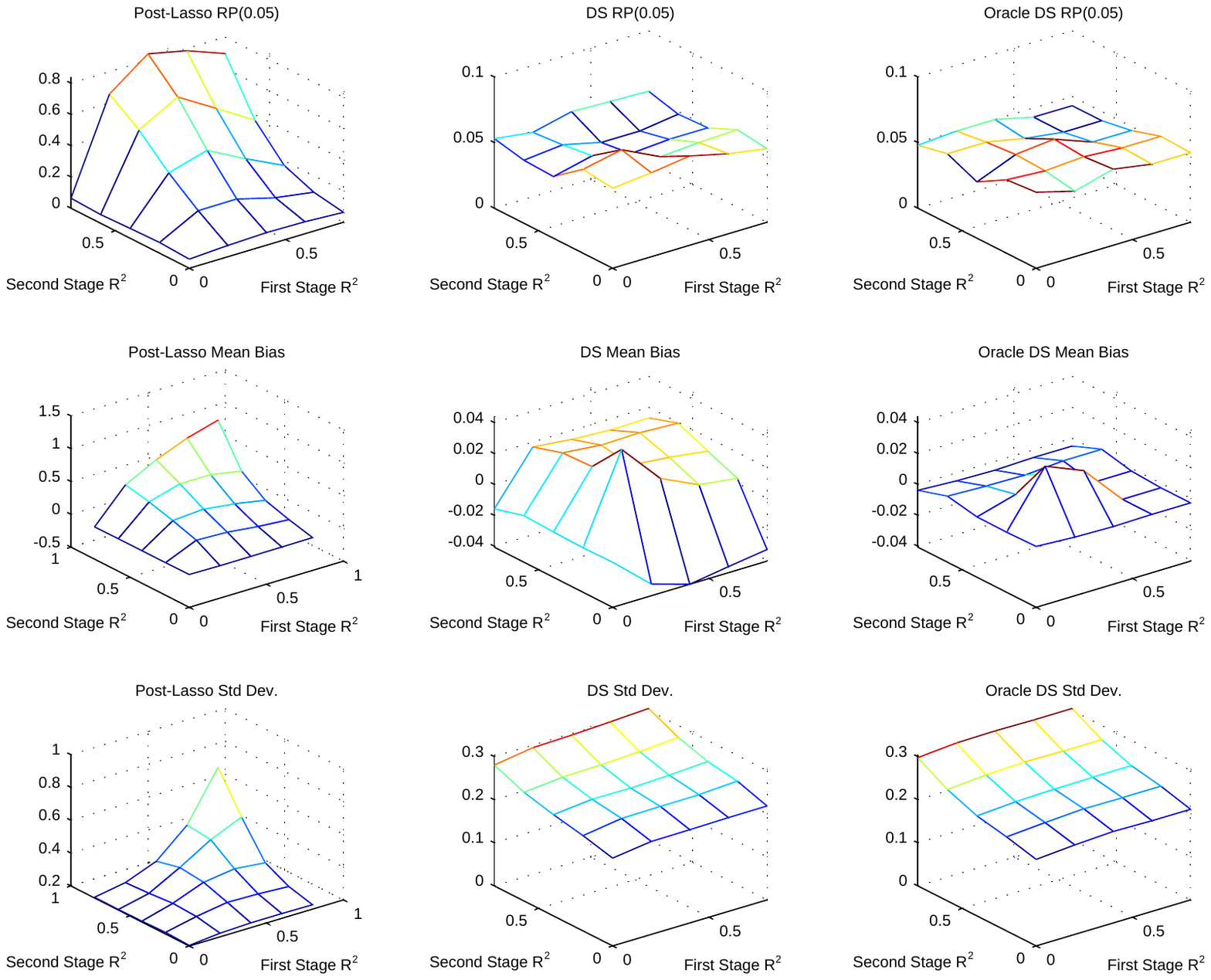}
	\label{fig:figure20}
\caption{Design 5a}
\end{figure}

\pagebreak

\begin{figure}
\includegraphics[width=\textwidth]{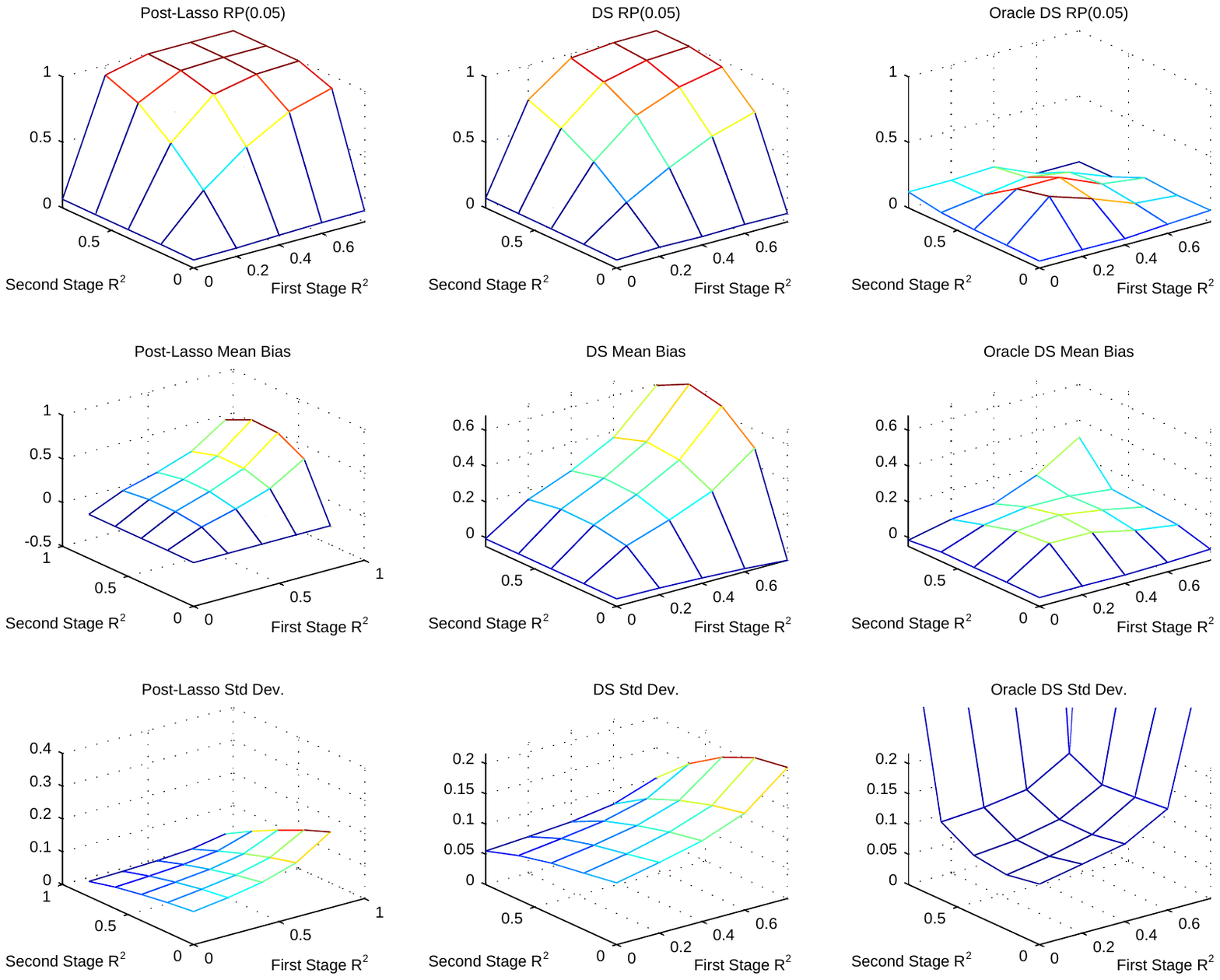}
	\label{fig:figure21}
\caption{Design 6a}
\end{figure}

\pagebreak

\begin{figure}
\includegraphics[width=\textwidth]{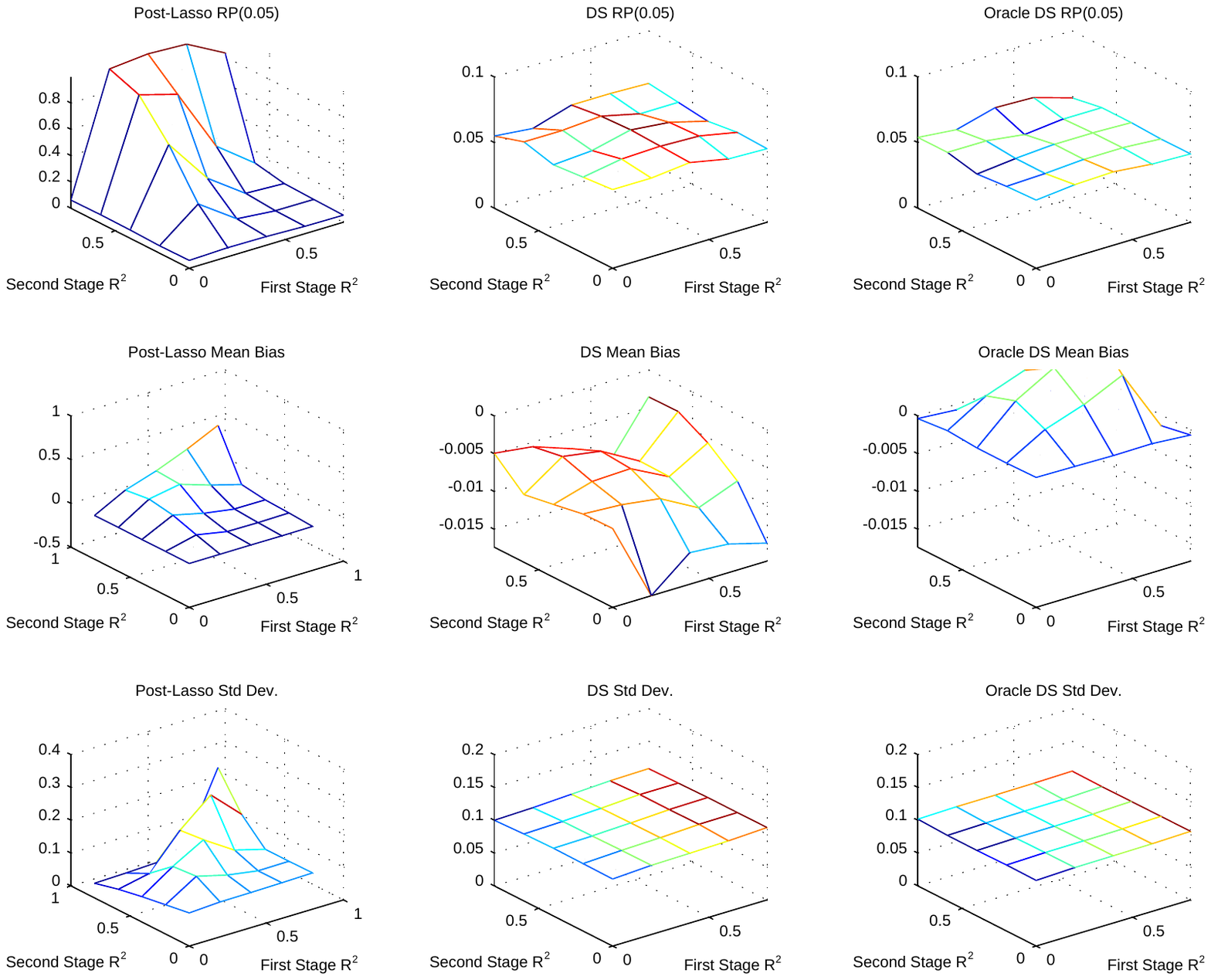}
	\label{fig:figure22}
\caption{Design 7a}
\end{figure}

\pagebreak

\begin{figure}
\includegraphics[width=\textwidth]{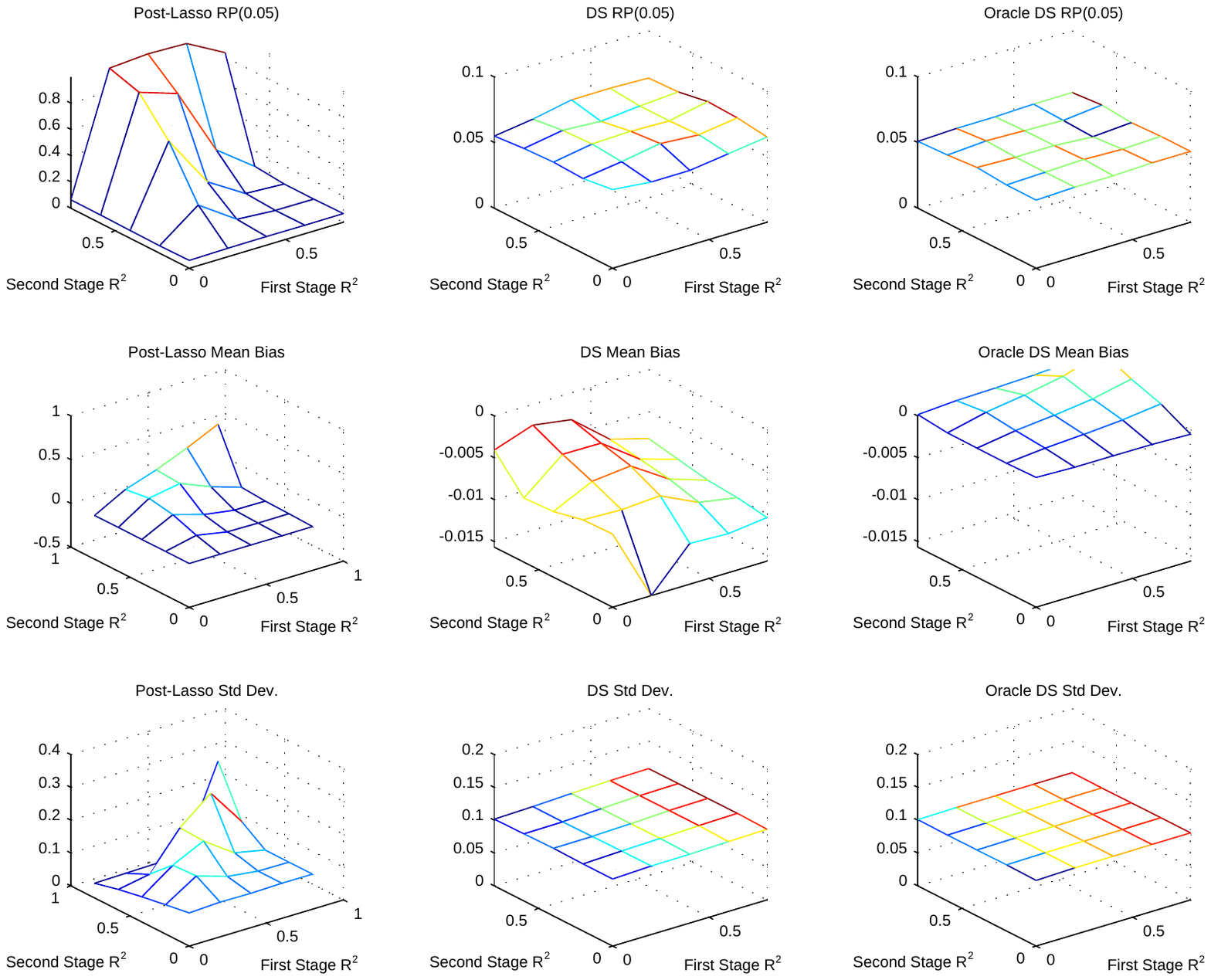}
	\label{fig:figure23}
\caption{Design 72a}
\end{figure}

\pagebreak

\begin{figure}
\includegraphics[width=\textwidth]{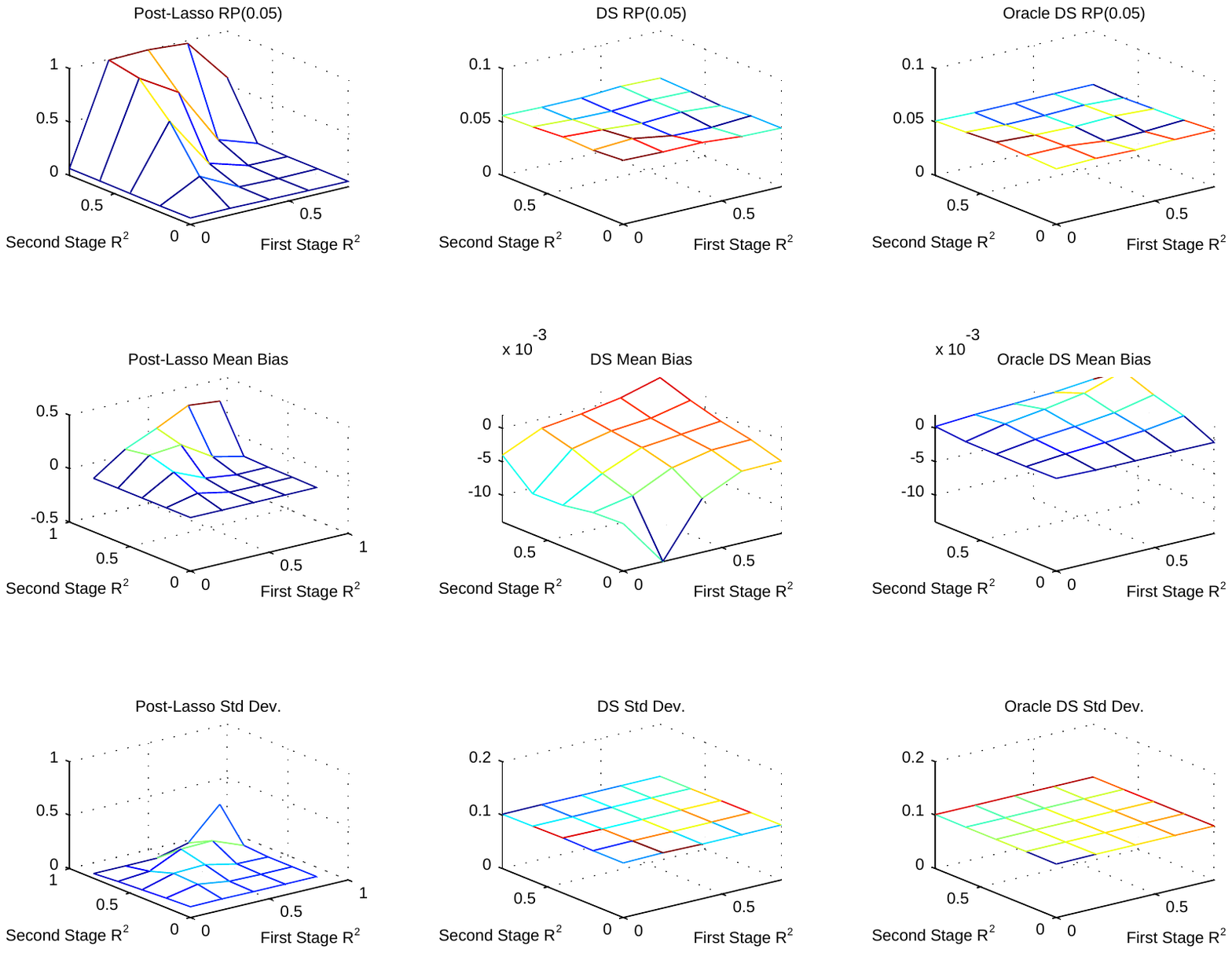}
	\label{fig:figure24}
\caption{Design 722a}
\end{figure}

\pagebreak

\begin{figure}
\includegraphics[width=\textwidth]{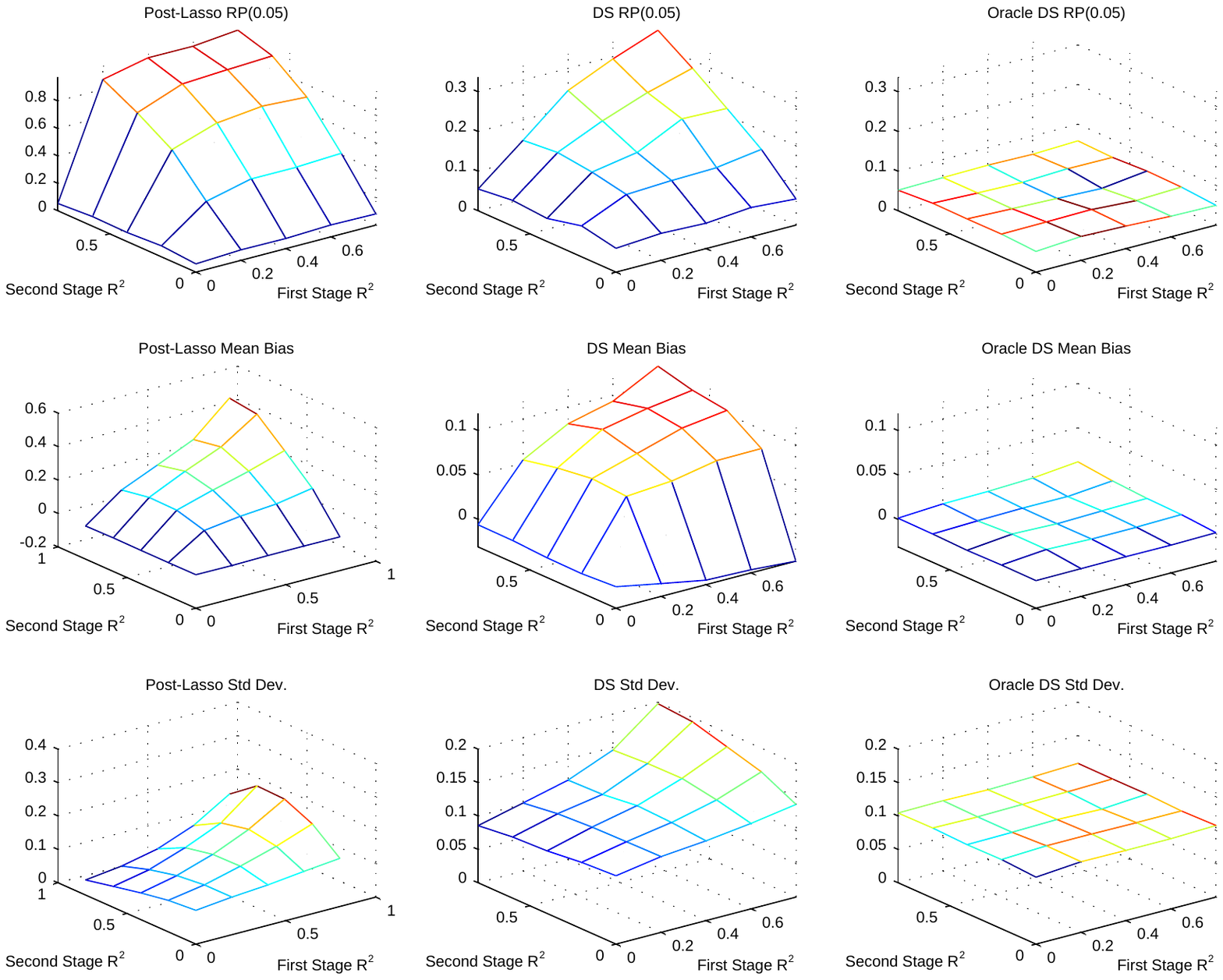}
	\label{fig:figure25}
\caption{Design 8a}
\end{figure}

\pagebreak
\clearpage

\begin{figure}
\includegraphics[width=\textwidth]{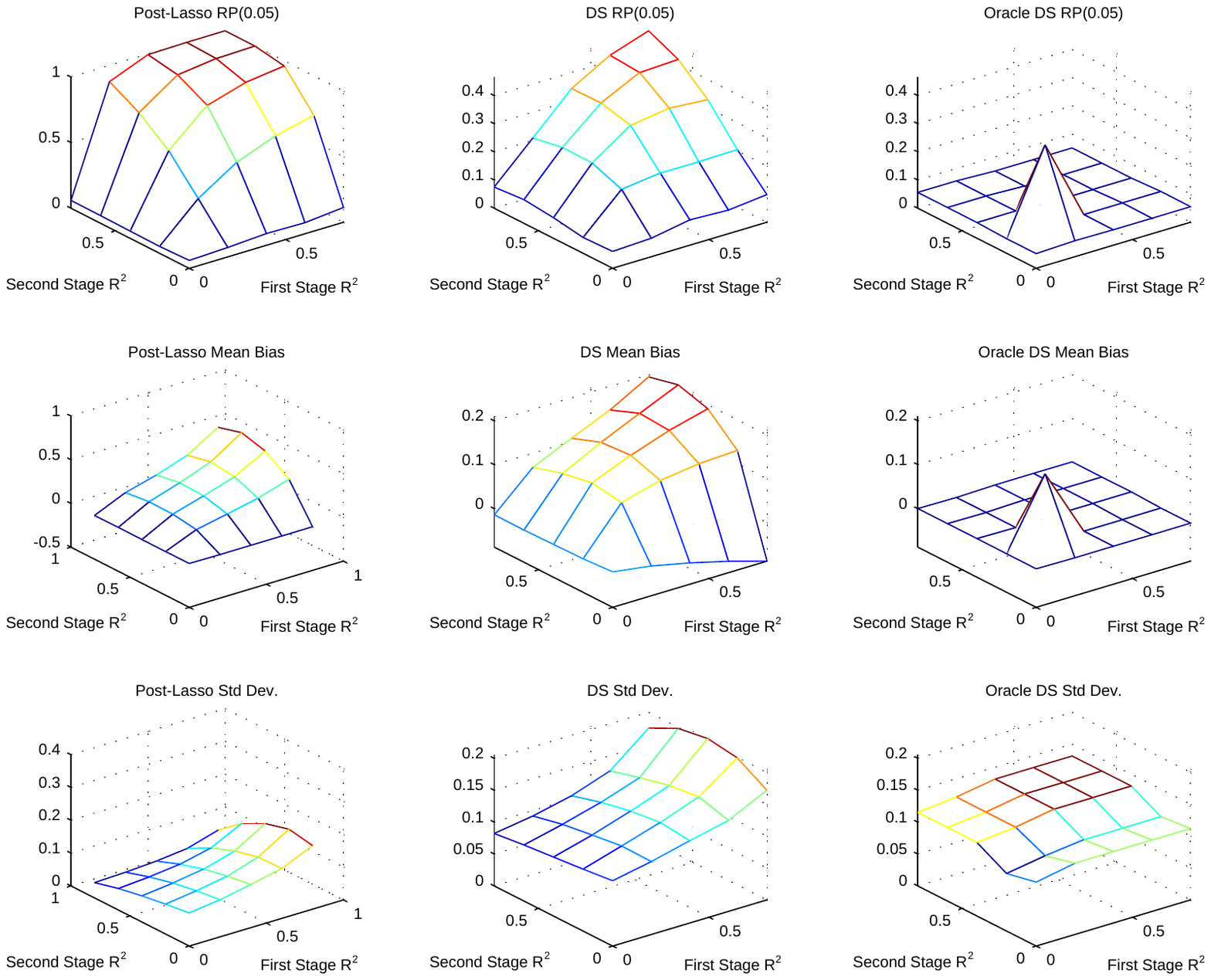}
	\label{fig:figure26}
\caption{Design 1001a}
\end{figure}

\bibliographystyle{econometrica}
\bibliography{mybibVOLUME}

\end{document}